\documentclass[]{article}

\usepackage{graphicx}
\usepackage{amsmath}
\usepackage{amssymb}
\usepackage{amsthm}
\usepackage{mathrsfs}
\usepackage{pxfonts}
\usepackage{enumerate}
\usepackage{color}
\usepackage{mathdots}
\usepackage{sectsty}
\usepackage{tikz}
\usepackage{adjustbox}
\usepackage{enumitem}
\usepackage{caption}
\usepackage{bbold}
\usepackage[hidelinks]{hyperref}

\usepackage{booktabs}

\allowdisplaybreaks
\sectionfont{\scshape\centering\fontsize{11}{14}\selectfont}
\subsectionfont{\scshape\fontsize{11}{14}\selectfont}
\usepackage{fancyhdr}
\usepackage[nottoc,notlot,notlof]{tocbibind}

\newtheorem{thm}{Theorem}[section]

\newtheorem{lem}[thm]{Lemma}
\newtheorem{defn}[thm]{Definition}
\newtheorem{rmk}[thm]{Remark}
\newtheorem{prop}[thm]{Proposition}

\newtheorem{conj}[thm]{Conjecture}

\newtheorem{ass}[thm]{Assumption}
\newtheorem*{theorem*}{Theorem}

\newcommand\shorttitle{$\mathrm{GL_N}(\mathbb{C})$ Brownian motion and SPDE on entire functions}
\newcommand\authors{T. Assiotis and Z. S. Mirsajjadi}

\title{\large \bf $\mathsf{GL}_N(\mathbb{C})$ BROWNIAN MOTION AND STOCHASTIC PDE ON ENTIRE FUNCTIONS}
\author{\small THEODOROS ASSIOTIS AND ZAHRA SADAT MIRSAJJADI}
\date{}

\begin{document}
\maketitle

\begin{abstract} 
We construct the full edge scaling limit of the singular values of Brownian motion on the general linear group $\mathsf{GL}_N(\mathbb{C})$ starting from general conditions. We show that the limiting paths solve an infinite system of SDE with log-interaction and have a Gibbs resampling property with exponential Brownian bridges. Moreover, we show that the evolution of the limiting rescaled reverse characteristic polynomial solves a stochastic partial differential equation with a non-linear multiplicative noise and linear drift. From a special initial condition the resulting line ensemble coincides, in logarithmic coordinates, with a line ensemble constructed by Ahn which arises as a universal scaling limit of singular values of products of random matrices. We prove some analogous results on the evolution of limiting characteristic polynomials for two models whose stationary measures are given by the Hua-Pickrell and Bessel stochastic zeta functions respectively.
\end{abstract}

\tableofcontents
\addtocontents{toc}{\setcounter{tocdepth}{1}}

\section{Introduction}

The main purpose of this paper two-fold: Firstly, to construct the full edge scaling limit of the singular values of multiplicative Brownian motion on the general linear group $\mathsf{GL}_N(\mathbb{C})$ \cite{NorrisRogersWilliams,Biane,JonesOConnell,IpsenSchomerus} starting from general initial conditions, including a description of the limiting dynamics in terms of an infinite system of stochastic differential equations (SDE) \cite{Tsai,Osada1,Osada2,Osada3,OsadaTanemura} with singular log-interaction and Gibbs resampling properties of the paths \cite{CorwinHammond1,CorwinHammond2,DimitrovCorwinBarraquand,DimitrovMatetski,CalvertHammondHegde,DauvergneVirag,XuanWuBessel}. From the special initial condition being the identity matrix the limiting paths have been studied recently in the literature \cite{Ahn, Mustazee}, giving rise to a universal critical stochastic process that is related to products of random matrices \cite{BougerolLacroix,AkemannKieburgWei,AkemannIpsen,KuijlaarsZhang,KuijlaarsStivigny} when one scales in a balanced way the number of factors in the product and the size of the matrices, see \cite{Akemann2019integrable,AkemannUniversality,Ahn,LiuWangWang,AkemannByun,BerezinStrahovGap}. This critical limit process also arises in certain last passage percolation models, see \cite{BerezinStrahov}. For the analogous and significantly more developed universality picture in the additive case of Dyson Brownian motion \cite{Dyson} in random matrix theory and beyond see \cite{ErdosYau,CorwinKPZ}.

Second, to show that the dynamics of the associated rescaled reverse characteristic polynomials converge to a limiting evolution on entire functions which solves a certain stochastic partial differential equation (spde) with a nonlinear multiplicative noise and a linear drift. We prove similar results for dynamics with invariant measure given by the stochastic zeta functions associated to the Hua-Pickrell and the Bessel point processes \cite{ChhaibiNajnudelNikeghbali,ValkoVirag2,LiValko,Theo-RandomEntireFunctions} respectively. As far as we can tell, these are the first explicit examples in the literature of such stochastic evolutions associated to these random entire functions.

The impetus of our work is some hidden integrability of all of these models, a certain consistency as the matrix size $N$ varies, which allows us to apply a powerful formalism introduced by Borodin and Olshanski \cite{BorodinOlshanskiMarkov,BorodinOlshanskiThoma,OlshanskiLectureNotes,OlshanskiICM} that we have more recently further developed in the context of random matrices \cite{HuaPickrellDiffusions,AM24}.

\subsection{Main results}
 We need to introduce some notation and terminology. First define,
\begin{equation*}
\mathbf{W}_{N;+}\left\{\mathbf{x}=(x_i)_{i=1}^N\in \mathbb{R}^{N}:x_1\ge x_2 \ge \cdots \ge x_N\ge 0\right\},
\end{equation*}
and write $\mathbf{W}_{N;+}^\circ$ for its interior. We also define, endowed with product topology,
\begin{align*}
\mathbf{W}_{\infty;+}=\left\{\mathbf{x}=(x_i)_{i\in \mathbb{N}}\in \mathbb{R}^{\mathbb{N}}:x_1\ge x_2\ge x_3 \ge \cdots \ge 0\right\},\\
\mathbf{W}_{\infty;+}^\circ=\left\{\mathbf{x}=(x_i)_{i\in \mathbb{N}}\in \mathbb{R}^{\mathbb{N}}:x_1> x_2> x_3 > \cdots >0\right\}.
\end{align*}
The following infinite-dimensional spaces, in some sense extended versions of the $\mathbf{W}_{\infty;+}$ but with some local-compactness, will play an important role. Write $\mathbb{R}_+=[0,\infty)$.
\begin{defn} We define the following spaces:
\begin{align} 
\Upsilon&=\left\{\boldsymbol{\upsilon}=(\mathbf{x}^+,\mathbf{x}^-,\gamma,\delta)\in \mathbf{W}_{\infty;+}\times \mathbf{W}_{\infty;+}\times \mathbb{R}\times \mathbb{R}_+:\sum_{i=1}^\infty (x_i^+)^2+(x_i^-)^2\le \delta\right\},\label{Ypsilon}\\
\Upsilon_+&=\left\{\boldsymbol{v}=(\mathbf{x},\gamma)\in \mathbf{W}_{\infty;+}\times \mathbb{R}_+: \sum_{i=1}^\infty x_i\le \gamma \right\},\label{Ypsilon+}
\end{align}
endowed with product topology as subsets of $\mathbb{R}^{\mathbb{N}}\times \mathbb{R}^{\mathbb{N}}\times \mathbb{R}\times \mathbb{R}_+$ and $\mathbb{R}^{\mathbb{N}}\times \mathbb{R}_+$ respectively.
\end{defn}
Note that, $\Upsilon, \Upsilon_+$ are locally compact Polish spaces. These spaces are in fact intimately related to a distinguished class of entire functions called the Laguerre-Polya class. These entire functions are exactly the ones that arise as uniform limits in $\mathbb{C}$ of polynomials with only real (or only non-negative real) zeroes. Given $\boldsymbol{v} \in \Upsilon_+$  and $\boldsymbol{u}\in \Upsilon$  define the entire functions $\mathfrak{E}_{\boldsymbol{v}}^+$ and $\mathfrak{E}_{\boldsymbol{u}}$ by their Hadamard factorisations as follows (all these functions are normalised so that they are $1$ when evaluated at $z=0$):
\begin{align*}
\mathfrak{E}_{\boldsymbol{v}}^+(z)&=\mathrm{e}^{-(\gamma-\sum_{i=1}^\infty x_i)z}\prod_{i=1}^\infty \left(1-zx_i\right), \\
\mathfrak{E}_{\boldsymbol{u}}(z) &=\mathrm{e}^{-\gamma z-(\delta-\sum_{i=1}^\infty (x_i^+)^2+(x_i^-)^2)z^2}\prod_{i=1}^\infty(1-zx_i^+)\mathrm{e}^{zx_i^+}\prod_{i=1}^\infty(1+zx_i^-)\mathrm{e}^{-zx_i^-}.
\end{align*}
These functions are well-defined by virtue of the inequalities in the definitions of $\Upsilon$ and $\Upsilon_+$ in \eqref{Ypsilon} and \eqref{Ypsilon+} respectively. For a domain $\mathscr{D}\subseteq \mathbb{C}$  of the complex plane write $\mathsf{H}(\mathscr{D})$ for the space of holomorphic functions on $\mathscr{D}$ endowed with the topology of uniform on compact sets convergence which makes it into a Frechet space.

\begin{defn} We define the following subsets of $\mathsf{H}(\mathbb{C})$,
\begin{equation*}
\mathfrak{LP}_+=\{\mathfrak{E}^+_{\boldsymbol{v}};\boldsymbol{v}\in \Upsilon_+\}, \ \ \mathfrak{LP}=\{\mathfrak{E}_{\boldsymbol{u}};\boldsymbol{u}\in \Upsilon\},
\end{equation*}
endowed with the topology of uniform convergence on compact sets.
\end{defn}

We now turn to multiplicative Brownian motion on $\mathsf{GL}_N(\mathbb{C})$ \cite{NorrisRogersWilliams,Biane,JonesOConnell,BianeMatrixValuedBM, IpsenSchomerus,KempGLN,BCKP,AhnVanPeski}. Let us denote by $\mathbb{M}_N(\mathbb{C})$ the space of all $N\times N$ complex matrices and let $\mathsf{GL}_N(\mathbb{C})$
be the group of invertible ones.

\begin{defn}
For any $N\in\mathbb{N}$ and 
$\theta\in\mathbb{R}$, we define the drifted  multiplicative Brownian motion 
$(\mathsf{Y}_N^\theta(t))_{t\ge0}$ on $\mathsf{GL}_N(\mathbb{C})$, with drift $\theta$, as the unique strong solution to the matrix-valued SDE:
\begin{align}\label{thetaBMonGL}
\mathrm{d} \mathsf{Y}_N^\theta(t) = \mathsf{Y}_N^\theta(t)\, \mathrm{d} \boldsymbol{B}_N(t) + \theta \mathsf{Y}_N^\theta(t) \, \mathrm{d} t,
\end{align}
where
$\boldsymbol{B}_N$ is a matrix Brownian motion on $\mathbb{M}_N(\mathbb{C})$, that is,
\begin{align*}
\boldsymbol{B}_N(t) &=  \boldsymbol{B}_N^{(1)}(t) + \mathrm{i} \boldsymbol{B}_N^{(2)}(t),
\end{align*}
with $\boldsymbol{B}_N^{(1)},\boldsymbol{B}_N^{(2)}$ independent matrices with independent standard real Brownian motions as entries. 
   \end{defn}

The fact that the matrix SDE \eqref{thetaBMonGL} has a unique strong solution follows from standard results for SDEs with Lipschitz coefficients \cite{IkedaWatanabe,RevuzYor}. Write $\mathbf{A}^\dag$ for the conjugate transpose of a matrix $\mathbf{A}$ and fix $\theta\in \mathbb{R}$ that we drop from the notation. Let $(\mathfrak{x}_i^{(N)})_{i=1}^N$ be the squared singular values of $\mathsf{Y}_N$, arranged in non-increasing order, or equivalently the eigenvalues of $\mathsf{R}_N=\mathsf{Y}_N^\dag \mathsf{Y}_N$ which solves the matrix SDE, with $\mathbf{I}_N$ the identity matrix,
\begin{equation*}
\mathrm{d}\mathsf{R}_N(t)=\mathsf{R}_N(t)\, \mathrm{d} \boldsymbol{B}_N(t) + \mathrm{d} \boldsymbol{B}_N(t)^\dag \, \mathsf{R}_N(t) + 2\theta\mathsf{R}_N(t)\, \mathrm{d}t + 2\operatorname{Tr}(\mathsf{R}_N(t))\, \mathbf{I}_N\, \mathrm{d}t.
\end{equation*}
Standard results  \cite{MatrixYW,Graczyk-Malecki} give that $(\mathfrak{x}_i^{(N)})_{i=1}^N$ is the unique solution of the equation in $\mathbf{W}_{N;+}$ \begin{align}\label{evalX-SDE}
\mathrm{d} \mathfrak{x}_i^{(N)}(t) = 2\mathfrak{x}_i^{(N)}(t) \, \mathrm{d} \mathsf{w}_i(t) + 
2\mathfrak{x}_i^{(N)}(t)\left( (1+\theta)
+
\sum_{j=1, j \neq i}^N \frac{\mathfrak{x}_i^{(N)}(t) + \mathfrak{x}_j^{(N)}(t)}{\mathfrak{x}_i^{(N)}(t) - \mathfrak{x}_j^{(N)}(t)}\right)\mathrm{d}t,\  \  i\in \llbracket 1, N \rrbracket,
\end{align}
with the $\mathsf{w}_i$ independent standard Brownian motions and $\llbracket k,\ell \rrbracket =  \{k, k+1, \ldots, \ell\}$. The process that is most commonly considered in the literature is the logarithms of the (squared) singular values of $\mathsf{Y}_N$, in particular one lets
\begin{align}\label{XiDef}
  \xi_i^{(N)}(t)\overset{\textnormal{def}}{=}
  \displaystyle\log \mathfrak{x}_i^{(N)}(t/4), \  \ t\ge 0,\  \ i\in \llbracket 1, N \rrbracket,
\end{align}
which by an application of It\^o's formula solves
\begin{align*}
\mathrm{d}\xi_i^{(N)}(t)&= \mathrm{d} \mathsf{w}_i(t)
+ 
\frac{\theta}{2}\mathrm{d}t
+
 \frac{1}{2}\sum_{\substack{j=1, j \neq i}}^N \coth\left( \frac{\xi_i^{(N)}(t) - \xi_j^{(N)}(t)}{2} \right) \mathrm{d}t,
\  \ i \in \llbracket 1, N \rrbracket.
\end{align*}
Interestingly, the choice $\theta=0$ of this equation arises as a particular case of radial Heckman-Opdam processes
studied in \cite{Heckman-Opdam}. Finally, we define the following rescaled process, which is the one that we will be working with,
\begin{align}\label{x-rsc}
    \mathsf{x}_i^{(N)}(t)
    \overset{\textnormal{def}}{=}
    \mathrm{e}^{-Nt/2}\mathfrak{x}_i^{(N)}(t/4), \  \ t\ge 0,\  \ i \in \llbracket 1, N \rrbracket,
\end{align}
which by It\^o's formula solves
\begin{equation}\label{GLinitial}
\mathrm{d}\mathsf{x}_i^{(N)}(t) = \mathsf{x}_i^{(N)}(t) \mathrm{d}\mathsf{w}_i(t) +\frac{\theta}{2}\mathsf{x}_i^{(N)}(t)\mathrm{d}t+\sum_{j=1,j\neq i}^N\frac{\mathsf{x}_i^{(N)}(t)\mathsf{x}^{(N)}_j(t)}{\mathsf{x}^{(N)}_i(t)-\mathsf{x}^{(N)}_j(t)}\mathrm{d}t, \ \  i\in \llbracket 1, N\rrbracket.   
\end{equation}
As for \eqref{evalX-SDE}, from \cite{AGZ,Graczyk-Malecki} this SDE has a unique strong solution starting from anywhere in $\mathbf{W}_{N;+}$. Finally, define the rescaled reverse characteristic polynomial associated to \eqref{GLinitial} by 
\begin{equation*}
\mathfrak{gl}_{t;N}(z)=\prod_{i=1}^N\left(1-\frac{\mathsf{x}_i^{(N)}(t)z}{N}\right).
\end{equation*}
Of course, $\mathfrak{gl}_{t;N}$ depends on $\theta$ but we suppress it from the notation. Recall from \cite{Kallenberg} the definition of a $\mathrm{C}_0$-Feller semigroup and the associated Feller process. A diffusion process is a strong Markov process with continuous sample paths. The precise definition of Gibbs resampling property \cite{CorwinHammond1} will be given in Section \ref{SubsectionGibbs}. We write $\mathrm{C}(\mathbb{R}_{+};\mathscr{X})$ for the space of continuous functions from $\mathbb{R}_+$ to a metric space $\mathscr{X}$ endowed with the topology of uniform convergence on compact sets in time. Finally, we define $\mathsf{x}_i^{(N)}$ for $i>N$ to be identically zero. We can now state our first main result. 

\begin{thm}\label{ThmMainGLN}
Let $\theta\in \mathbb{R}$ be fixed. Then, there exists a unique Feller diffusion process $(\mathbf{X}_{GL}^t)_{t\ge 0}$ on $\Upsilon_+$ such that whenever, 
\begin{equation*}
\left((N^{-1}\mathsf{x}_i^{(N)}(0))_{i\in \mathbb{N}},N^{-1}\sum_{i=1}^\infty \mathsf{x}_i^{(N)}(0)\right)  \overset{N \to \infty}{\longrightarrow} \boldsymbol{v}=((x_i)_{i\in \mathbb{N}},\gamma)\in \Upsilon_+,
\end{equation*}
we have, as $N \to \infty$,
\begin{equation*}
\left((N^{-1}\mathsf{x}_i^{(N)}(\bullet))_{i\in \mathbb{N}},N^{-1}\sum_{i=1}^\infty \mathsf{x}_i^{(N)}(\bullet)\right) \overset{\mathrm{d}}{\longrightarrow} \left((\mathsf{x}_i(\bullet))_{i\in \mathbb{N}},\boldsymbol{\gamma}(\bullet)\right)\overset{\mathrm{def}}{=} \mathbf{X}_{GL}^\bullet
\end{equation*}
in $\mathrm{C}(\mathbb{R}_+;\Upsilon_+)$
where $\mathbf{X}_{GL}^0=\boldsymbol{v}$, and $\overset{\mathrm{d}}{\longrightarrow}$ denotes convergence in distribution.
Furthermore, for any $\gamma\ge \sum_{i=1}^\infty x_i$, whenever $(x_i)_{i\in \mathbb{N}}\in \mathbf{W}_{\infty;+}^\circ$, the projection of $\mathbf{X}_{GL}^\bullet$ on the $(\mathsf{x}_i(\bullet))_{i\in \mathbb{N}}$ solves the infinite system of SDE
\begin{equation}\label{ISDEGL}
\mathsf{x}_i(t)= x_i+\int_0^t\mathsf{x}_i(s)\mathrm{d}\mathsf{w}_i(s) + \frac{\theta}{2}\int_0^t\mathsf{x}_i(s)\mathrm{d}s
 +\int_0^t\sum_{j=1,j\neq i}^{\infty}\frac{\mathsf{x}_i(s)\mathsf{x}_j(s)}{\mathsf{x}_i(s)-\mathsf{x}_j(s)}\mathrm{d}s, \  \forall t\ge 0 , \   i \in \mathbb{N},
 \end{equation}
with independent standard Brownian motions $(\mathsf{w}_i)_{i\in \mathbb{N}}$ and $(\mathsf{x}_i(t))_{i\in \mathbb{N},t\in \mathbb{R}_+}$ viewed as a line ensemble satisfies a  Gibbs  property with respect to exponential Brownian bridges. Moreover, as $N\to \infty$,
\begin{equation*}
\mathfrak{gl}_{\bullet;N}\overset{\mathrm{d}}{\longrightarrow} \mathfrak{gl}_\bullet,
\end{equation*}
in $\mathrm{C}(\mathbb{R}_+;\mathfrak{LP}_+)$ where $\mathfrak{gl}_t(z)=\mathfrak{E}_{\mathbf{X}_{GL}^t}^+(z)$ is a Markov process with values in $\mathfrak{LP}_+$ which solves the following stochastic partial differential equation 
\begin{equation}\label{GLequation}
\mathrm{d}\mathfrak{gl}_t(z)=\mathrm{d}\mathsf{M}_t(z;\mathfrak{gl})+\left[\frac{\theta}{2}z\partial_z\mathfrak{gl}_t(z)
    -
    \frac{z^2}{2}\partial_{z}^2\mathfrak{gl}_t(z)\right]\mathrm{d}t
\end{equation}
where $\mathsf{M}\in \mathrm{C}(\mathbb{R}_+;\mathsf{H}(\mathbb{C}))$ and for all $z\in \mathbb{C}$, $t\mapsto \mathsf{M}_t(z;\mathfrak{gl})$ are continuous local martingales with covariation, for $z,w \in \mathbb{C}$,
\begin{equation}\label{CovarEq}
\langle \mathsf{M}_t(z;\mathfrak{gl}),\mathsf{M}_t(w;\mathfrak{gl})\rangle=\frac{zw}{z-w}\int_0^t\left(\mathfrak{gl}_s(z)\partial_w\mathfrak{gl}_s(w)-\mathfrak{gl}_s(w)\partial_z\mathfrak{gl}_s(z)\right)\mathrm{d}s.
\end{equation}
Finally, for $\theta<0$, we have,  as $t \longrightarrow \infty$,  $\mathfrak{gl}_t(z)\overset{\mathrm{d}}{\longrightarrow} 1$ in $\mathfrak{LP}_+$.
\end{thm}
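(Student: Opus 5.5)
The plan is to follow the Borodin--Olshanski method of intertwining Markov kernels, as mentioned in the introduction. First I would establish the consistency of the finite-$N$ dynamics \eqref{GLinitial} as $N$ varies. Concretely, I would look for explicit Markov kernels $\Lambda^{N+1}_N$ from $\mathbf{W}_{N+1;+}$ to $\mathbf{W}_{N;+}$ that intertwine the semigroups: $\mathsf{P}^{(N+1)}_t\Lambda^{N+1}_N=\Lambda^{N+1}_N\mathsf{P}^{(N)}_t$. The kernels should be of Dixon--Anderson/Dirichlet type, namely the law of the eigenvalues of a rank-one perturbation or corner of $\mathsf{R}_{N+1}$ after the rescaling by $\mathrm{e}^{-Nt/2}$. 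The factor $\mathrm{e}^{-Nt/2}$ in \eqref{x-rsc} is precisely what should make this hold. I would verify the intertwining at the level of generators, acting on a determining class of functions such as $\mathbf{x}\mapsto\prod_i(1-zx_i/N)^{-1}$ or power sums. The verification reduces to an algebraic identity involving $\sum_{j\ne i}x_ix_j/(x_i-x_j)$.

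Second, I would identify the boundary of this projective system with $\Upsilon_+$. The claim is that the entrance laws are exactly mixtures of the limits of $\Lambda^\infty_N$ from points $\boldsymbol{v}$, and that a sequence $\mathbf{x}^{(N)}$ converges to the boundary point $\boldsymbol{v}$ if and only if $(N^{-1}\mathbf{x}^{(N)},N^{-1}\sum_i x^{(N)}_i)\to\boldsymbol{v}$. I would prove this through the generating function $\mathbb{E}\prod(1-zy_i)$ under $\Lambda^N_K$, which converges to a function of $\mathfrak{E}^+_{\boldsymbol{v}}$. This is the analogue of the Vershik--Kerov/Olshanski ergodic method for the Laguerre-type case.

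Third, I would apply the abstract Borodin--Olshanski theorem, in the form developed in \cite{HuaPickrellDiffusions,AM24}. It yields a unique Feller semigroup on $\Upsilon_+$ compatible with the $\mathsf{P}^{(N)}$, together with convergence of finite-dimensional distributions under the stated initial condition. Upgrading to convergence in $\mathrm{C}(\mathbb{R}_+;\Upsilon_+)$ requires tightness. I would get it from moment bounds on the coordinates and on $N^{-1}\sum_i\mathsf{x}^{(N)}_i$, whose SDE is explicit: the trace term gives $\mathrm{d}\sum x_i=\sum x_i\mathrm{d}w_i+\frac{\theta}{2}\sum x_i\,\mathrm{d}t$, since the interaction terms cancel by antisymmetry. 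The convergence $\mathfrak{gl}_{\bullet;N}\to\mathfrak{gl}_\bullet$ then follows by continuity of $\boldsymbol{v}\mapsto\mathfrak{E}^+_{\boldsymbol{v}}$. To check that the limit is a diffusion, I would use a Kolmogorov-type criterion applied to the limiting martingale problem.

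Fourth, I would derive the SPDE and the ISDE.
\begin{itemize}
\item \emph{SPDE.} Apply It\^o's formula to $\mathfrak{gl}_{t;N}(z)$ using \eqref{GLinitial}. Each drift contribution is a symmetric function of the $x_i$ that rewrites exactly as $\frac{\theta}{2}z\partial_z-\frac{z^2}{2}\partial_z^2$ applied to the polynomial, up to $O(1/N)$ terms. The quadratic covariation $\sum_i x_i^2N^{-2}\prod_{j\ne i}(\cdots)\cdot(\cdots)$ can be written via partial fractions as \eqref{CovarEq}. I would then pass to the limit in the martingale problem using uniform-on-compacts convergence.
\item \emph{ISDE.} Expand \eqref{GLequation} near a zero $z=1/x_i$ to recover \eqref{ISDEGL}. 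Alternatively, pass to the limit in \eqref{GLinitial} coordinatewise, using that $\sum_j x_j$ is summable; this needs the paths to stay in $\mathbf{W}^\circ_{\infty;+}$, i.e.\ non-collision.
\item \emph{Gibbs property.} Transfer it from finite $N$, where the Doob $h$-transform structure with Vandermonde $h$ gives exponential Brownian bridges conditioned not to intersect, via weak limits.
\item \emph{Long-time limit for $\theta<0$.} Use $\mathbb{E}\,\boldsymbol{\gamma}(t)=\gamma\mathrm{e}^{\theta t/2}\to0$.
\end{itemize}

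The main obstacles are, first, the boundary identification together with the exact intertwining computation, and second, proving non-collision and local integrability of the infinite drift in \eqref{ISDEGL}. For the latter I would first try to control $\int_0^t\mathsf{x}_i\mathsf{x}_j/|\mathsf{x}_i-\mathsf{x}_j|\,\mathrm{d}s$ through the Gibbs property and comparison with finite-$N$ estimates.
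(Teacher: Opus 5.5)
\textbf{Main gap: the ISDE step.} Neither of your two routes produces \eqref{ISDEGL}. Both produce
\begin{equation*}
\mathrm{d}\mathsf{x}_i(t)=\mathsf{x}_i(t)\,\mathrm{d}\mathsf{w}_i(t)+\frac{\theta}{2}\mathsf{x}_i(t)\,\mathrm{d}t+\sum_{j\neq i}\frac{\mathsf{x}_i(t)\mathsf{x}_j(t)}{\mathsf{x}_i(t)-\mathsf{x}_j(t)}\,\mathrm{d}t+\Big(\boldsymbol{\gamma}(t)-\sum_{j=1}^\infty\mathsf{x}_j(t)\Big)\mathrm{d}t .
\end{equation*}

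\emph{Residue route.} $\partial_z\log\mathfrak{gl}_t(z)$ contains the constant $\sum_j\mathsf{x}_j(t)-\boldsymbol{\gamma}(t)$. It comes from the factor $\mathrm{e}^{-(\boldsymbol{\gamma}(t)-\sum_j\mathsf{x}_j(t))z}$ in $\mathfrak{E}^+_{\mathbf{X}^t_{GL}}$. The quadratic drift, which becomes $-\frac12\partial_z(z^2\psi^2)$ for $\psi=\partial_z\log\mathfrak{gl}_t$, couples this constant to the simple pole at $1/\mathsf{x}_i(t)$. After It\^o's formula this gives exactly the extra term.

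\emph{Coordinatewise route.} Write $\hat{\mathsf{x}}_j=N^{-1}\mathsf{x}^{(N)}_j$. The tail $\sum_{M<j\le N}\hat{\mathsf{x}}_i\hat{\mathsf{x}}_j/(\hat{\mathsf{x}}_i-\hat{\mathsf{x}}_j)$ of the finite-$N$ interaction equals $\sum_{M<j\le N}\hat{\mathsf{x}}_j$ up to an error that is small for large $M$. This converges to $\boldsymbol{\gamma}(t)-\sum_{j\le M}\mathsf{x}_j(t)$, not to $\sum_{j>M}\mathsf{x}_j(t)$. Summability of the limiting coordinates does not force the normalised trace to converge to $\sum_j\mathsf{x}_j$. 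At $t=0$ it fails whenever $\gamma>\sum_i x_i$, which the theorem explicitly allows.

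\emph{What is missing.} You must show separately that $\boldsymbol{\gamma}(t)=\sum_j\mathsf{x}_j(t)$ for a.e.\ $t>0$. The paper imports this from Section 4 of \cite{AM24}. A direct argument is to sum the displayed equation over $i\le K$:
\begin{itemize}
\item interactions with $i,j\le K$ cancel by antisymmetry;
\item interactions with $i\le K<j$ are nonnegative by the ordering;
\item hence $K\int_0^t(\boldsymbol{\gamma}(s)-\sum_j\mathsf{x}_j(s))\,\mathrm{d}s$ is at most $\sum_{i\le K}\mathsf{x}_i(t)\le\boldsymbol{\gamma}(t)$, minus a martingale, plus $\frac{|\theta|}{2}\int_0^t\boldsymbol{\gamma}(s)\,\mathrm{d}s$, all bounded in probability as $K\to\infty$;
\item the integrand is nonnegative on $\Upsilon_+$, so dividing by $K$ forces it to vanish for a.e.\ $s$.
\end{itemize}
On the residue route you also still owe the identification of the extracted noises as independent standard Brownian motions. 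The paper does this with a double contour integral of the covariation and L\'evy's characterisation.

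\textbf{Second gap: non-collision.} Your plan here is circular as stated. Transferring the Gibbs property to the limit requires that, for each fixed $t$, the limiting $\mathsf{x}_i(t)$ are a.s.\ distinct; the conditioned bridge law needs distinct boundary data even to be defined. Weak convergence from finite $N$ cannot give this, because distinct points may merge in the limit. The Gibbs property upgrades fixed-time distinctness to pathwise non-collision (Remark \ref{RmkNonIntersectionGibbs}), but it cannot produce the fixed-time statement itself. The paper gets pathwise non-collision independently, from the Lyapunov-function argument of \cite{AM24} (Proposition \ref{NonCollisionProp}), and only then transfers the Gibbs property.

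Once pathwise non-collision is known, the local integrability you list as an obstacle is immediate. On $[0,t]$ the gaps between $\mathsf{x}_i$ and its neighbours are bounded below by continuity. So the interaction sum is bounded by $\sup_{s\le t}\mathsf{x}_i(s)\boldsymbol{\gamma}(s)$ divided by the minimal gap.

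\textbf{What is fine.} The rest of your plan is sound and matches the paper. Your kernel guess is right: it is the corners kernel \eqref{ExplicitMarkovKernel}, i.e.\ Dixon--Anderson with all parameters $1$. Your SPDE derivation and the $\theta<0$ argument via $\mathbb{E}\boldsymbol{\gamma}(t)$ also work. One small correction: the finite-$N$ equation for $\mathfrak{gl}_{t;N}$ is exact, with no $O(1/N)$ corrections.
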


A few comments are in order.

First, observe that the right hand side of \eqref{CovarEq} extends in an analytic way to $z=w$ to obtain for the quadratic variation of $t\mapsto \mathsf{M}_t(z;\mathfrak{gl})$,
\begin{equation}
\langle \mathsf{M}_t(z;\mathfrak{gl})\rangle=z^2\int_0^t\left((\partial_z\mathfrak{gl}_s(z))^2-\mathfrak{gl}_s(z)\partial^2_z\mathfrak{gl}_s(z)\right)\mathrm{d}s.
\end{equation}
%The fact that this is actually positive for $z\in \mathbb{R}$ is not completely obvious. It is a consequence of the fact that $\mathfrak{gl}_t\in \mathfrak{LP}_+$.
To our knowledge, the stochastic equation \eqref{GLequation}, with the martingale term depending in this particular non-linear way on $\mathfrak{gl}_{t}$, has not appeared before in the stochastic pde literature. We also derive an equation for the transform $\psi_t^{GL}(z)=\partial_z\log\mathfrak{gl}_t(z)$ below for which the martingale term depends linearly on $\psi$ but the drift term involves Burgers-type non-linearities. The form of the covariation \eqref{CovarEq} is reminiscent, had $\mathfrak{gl}_t$ been deterministic, to the form of correlation kernels and integrable operators from random matrix theory and Riemann-Hilbert problems \cite{IntegrableOperators,Deift,AGZ,ForresterLogGas}. It would be interesting to investigate existence and uniqueness properties of the equation \eqref{GLequation}, and the analogous ones \eqref{RVspde}, \eqref{HPspde} below, directly at the level of these equations instead of using an approximation procedure from finite-dimensional dynamics as we do here.

The scaled process $(N^{-1}\mathsf{x}_i^{(N)}(\bullet))_{i\in \mathbb{N}}$, for $\theta=0$, starting from $\mathsf{x}^{(N)}(0)=(1,\dots,1)$ exactly corresponds in logarithmic coordinates to the scaled dynamics considered by Ahn in \cite{Ahn} (Ahn's $\boldsymbol{\xi}_i^{(N)}(t/4)$ is our $\xi_i^{(N)}(t)$ to be precise):
\begin{equation}\label{AhnRescaledProcess}
\left(\xi_i^{(N)}\left(t\right)-\frac{Nt}{2}-\log N\right)_{i\in \mathbb{N},t\in \mathbb{R}_+},
\end{equation}
and whose $N \to \infty $ limit also arises in products of random matrices \cite{Ahn,LiuWangWang,Akemann2019integrable,AkemannUniversality}. 

Finally, regarding equation \eqref{ISDEGL} on the dynamics of the paths, it can be shown that the solutions
constructed in Theorem \ref{ThmMainGLN} have different laws for different initial parameters $\gamma$ but there is only one out of them for which almost surely $t\mapsto \sum_{i=1}^\infty \mathsf{x}_i(t)$ is continuous on all of $\mathbb{R}_+$ given by  the choice $\gamma=\sum_{i=1}^\infty x_i$. The argument is completely analogous to the one from Section 4 of \cite{AM24} and we do not reproduce it here. We will say more about uniqueness after Theorem \ref{ThmMainStat}.

We now move on to consider two models from \cite{AM24,HuaPickrellDiffusions} with two distinguished random entire functions \cite{ChhaibiNajnudelNikeghbali,ValkoVirag2,LiValko,Theo-RandomEntireFunctions} as stationary measures. To define these functions we need some preliminaries but first let us define the dynamics themselves. 

Define the Rider-Valk\'{o} model \cite{RiderValko,GrabschTexier,MatrixKesten} as the unique strong solution of the SDE starting anywhere in $\mathbf{W}_{N;+}$, with $\nu \in \mathbb{R}_+$,
\begin{equation}\label{RVmodel}
 \mathrm{d}\mathsf{z}_i^{(N)}(t) = \mathsf{z}_i^{(N)}(t) \mathrm{d}\mathsf{w}_i(t) -\frac{\nu}{2}\mathsf{z}_i^{(N)}(t)\mathrm{d}t+\frac{1}{2}\mathrm{d}t+\sum_{j=1,j\neq i}^N\frac{\mathsf{z}_i^{(N)}(t)\mathsf{z}^{(N)}_j(t)}{\mathsf{z}^{(N)}_i(t)-\mathsf{z}^{(N)}_j(t)}\mathrm{d}t, \ \  i\in \llbracket 1, N \rrbracket.   
\end{equation}
and almost surely for all $t>0$ living in the interior $\mathbf{W}_{N;+}^\circ$ and then the dynamical Cauchy model or Hua-Pickrell diffusions \cite{HuaPickrellDiffusions, AM24,AristaDemni,AuerVoit} as the unique strong solution to the SDE,  with $\mathfrak{s}\in \mathbb{C}$
\begin{align}\label{Hua-PickrellIntro}
\mathrm{d}\mathsf{y}_i^{(N)}(t)&=\sqrt{2\left((\mathsf{y}_i^{(N)}(t))^2+1\right)}\mathrm{d}\mathsf{w}_i(t)+2\left[-\Re(\mathfrak{s})\mathsf{y}_i^{(N)}(t)+\Im(\mathfrak{s})\right]\mathrm{d}t\nonumber\\
&+2\sum_{j=1,j\neq i}^N \frac{\mathsf{y}_i^{(N)}(t)\mathsf{y}_j^{(N)}(t)+1}{\mathsf{y}_i^{(N)}(t)-\mathsf{y}_j^{(N)}(t)}\mathrm{d}t, \ \ i\in \llbracket 1, N \rrbracket,
\end{align}
starting anywhere in the ordered chamber $\mathbf{W}_N$ over the whole of $\mathbb{R}$:
\begin{equation*}
\mathbf{W}_N=\left\{\mathbf{x}=(x_i)_{i=1}^N\in \mathbb{R}^N:x_1\ge x_2 \ge \cdots \ge x_N\right\},
\end{equation*}
and almost surely for all $t>0$ living in the interior $\mathbf{W}_N^\circ$. Various properties of these models have been studied in \cite{AM24,HuaPickrellDiffusions, AristaDemni,AuerVoit,RiderValko}. There are also associated matrix valued diffusions to \eqref{RVmodel} and \eqref{Hua-PickrellIntro}, see \cite{HuaPickrellDiffusions,AM24,MatrixBougerol}. Note that, although we use the same notation $\mathsf{w}_i$ for independent standard Brownian motions in various different SDEs our statements in this paper will make no claims on explicit couplings driven with the same Brownian motions. 

Now, we define the Hua-Pickrell \cite{BorodinOlshanskiErgodic,ForresterWitteCauchy} point process $\mathcal{HP}_{\mathfrak{s}}$ on $\mathbb{R}\backslash\{0\}$, for $\mathfrak{s}>-1/2$ as the determinantal point process \cite{JohanssonDeterminantal,BorodinDeterminantal,ForresterLogGas}  with kernel $\mathcal{K}_\mathfrak{s}(x,y)$ given by
\begin{equation*}
\mathcal{K}_\mathfrak{s}(x,y)=\frac{2^{4\mathfrak{s}-1}}{\pi}\frac{\Gamma(\mathfrak{s}+1)^2\Gamma\left(\mathfrak{s}+\frac{1}{2}\right)\Gamma\left(\mathfrak{s}+\frac{3}{2}\right)}{\Gamma(2\mathfrak{s}+1)\Gamma(2\mathfrak{s}+2)}\frac{J_{\mathfrak{s}-1/2}\left(\frac{1}{|x|}\right)J_{\mathfrak{s}+1/2}\left(\frac{1}{|y|}\right)-J_{\mathfrak{s}-1/2}\left(\frac{1}{|y|}\right)J_{\mathfrak{s}+1/2}\left(\frac{1}{|x|}\right)}{(x-y)|x|^{\frac{1}{2}}|y|^\frac{1}{2}},
\end{equation*}
where $J_\alpha$ is the Bessel function of the first kind with index $\alpha$. For $\mathfrak{s}=0$ and under the $x\mapsto 1/x$ transformation this specialises to the famous sine point process \cite{ForresterLogGas}. The process $\mathcal{HP}_\mathfrak{s}$ can also be defined for $\mathfrak{s}\in \mathbb{C}$ with $\Re(\mathfrak{s})>-1/2$ but we will not consider this more general case here. Define the inverse Bessel point process $\mathcal{IB}_\nu$ on $(0,\infty)$, with parameter $\nu>-1$ as the determinantal point process with correlation kernel $\mathcal{R}_\nu(x,y)$
\begin{equation*}
\mathcal{R}_\nu(x,y)=\frac{8\sqrt{2}}{xy}\frac{J_{\nu+1}\left(\frac{2\sqrt{2}}{x^{1/2}}\right)J_{\nu}\left(\frac{2\sqrt{2}}{y^{1/2}}\right)-J_{\nu+1}\left(\frac{2\sqrt{2}}{y^{1/2}}\right)J_{\nu}\left(\frac{2\sqrt{2}}{x^{1/2}}\right)}{x-y}.
\end{equation*}
Under the transformation $x\mapsto 1/x$ one obtains the more standard Bessel point process \cite{ForresterBessel,ForresterLogGas} (and thus the name inverse Bessel). 

We now come to the definition of the random entire functions of interest (the infinite products defining them converge uniformly in $z\in \mathbb{C}$ almost surely, see for example \cite{Theo-RandomEntireFunctions}).

\begin{defn}
For $\mathfrak{s}>-1/2$ and $\nu>-1$ define the Hua-Pickrell $\mathbb{HP}_\mathfrak{s}$ and Bessel $\mathbb{B}_\nu$ stochastic zeta functions by:
\begin{align*}
    \mathbb{HP}_\mathfrak{s}(z)=\lim_{k\to \infty}\prod_{\substack{\mathrm{x}\in \mathcal{HP}_\mathfrak{s}\\|\mathrm{x}|>k^{-2}}}\left(1-z\mathrm{x}\right), \ \  \
    \mathbb{B}_{\nu}(z)=\prod_{\mathrm{x}\in \mathcal{IB}_{\nu}}(1-z\mathrm{x}), \ \ \forall z \in \mathbb{C}.
\end{align*}
\end{defn}
These random entire functions are known as stochastic zeta functions \cite{ChhaibiNajnudelNikeghbali,ValkoVirag2} and are closely related to stochastic operators associated to random matrices \cite{RamirezRiderVirag,RamirezRider,ValkoVirag1}. They have been intensively studied in recent years \cite{ChhaibiNajnudelNikeghbali,ValkoVirag2,LiValko,LambertPaquette1,LambertPaquette2,NajnudelNikeghbali,Theo-RandomEntireFunctions}. The moments of their Taylor coefficients and certain integral statistics of them appear as leading order coefficients in asymptotics of various generalised moments of characteristic polynomials of random unitary matrices, see \cite{AGKW1,AGKW2,AssiotisNajnudel}. These leading order coefficients give conjectures in number theory for averages of the Riemann $\zeta$-function and families of L-functions following the Keating-Snaith philosophy \cite{KeatingSnaith1,KeatingSnaith2,BaileyKeating}.

Finally, define the rescaled reverse characteristic polynomials associated to the dynamics \eqref{RVmodel} and \eqref{Hua-PickrellIntro},
\begin{equation*}
\mathfrak{rv}_{t;N}(z)=\prod_{i=1}^N\left(1-\frac{\mathsf{z}_i^{(N)}(t)z}{N}\right), \ \ \mathfrak{hp}_{t;N}(z)=\prod_{i=1}^N\left(1-\frac{\mathsf{y}_i^{(N)}(t)z}{N}\right).
\end{equation*}
Clearly, these depend on $\nu$ and $\mathfrak{s}$ respectively but we suppress it from the notation. We then have the following theorem which is obtained by combining the new perspective of looking at the evolution of the functions and our results from \cite{AM24}.

\begin{thm}\label{ThmMainStat}
Let $\nu\in \mathbb{R}$ and $\mathfrak{s} \in \mathbb{C}$ be fixed. If $\mathfrak{rv}_{0;N} \overset{N \to \infty}{\longrightarrow} \mathfrak{E}_{\boldsymbol{v}}^+$ in $\mathfrak{LP}_+$, then as $N\to \infty$,
\begin{equation*}
\mathfrak{rv}_{\bullet;N}\overset{\mathrm{d}}{\longrightarrow} \mathfrak{rv}_\bullet,
\end{equation*}
in $\mathrm{C}(\mathbb{R}_+;\mathfrak{LP}_+)$ and  $(\mathfrak{rv}_t)_{t\ge 0}$ is a Markov process with values in $\mathfrak{LP}_+$ which solves the following stochastic partial differential equation 
\begin{equation}\label{RVspde}
\mathrm{d}\mathfrak{rv}_t(z)=\mathrm{d}\mathsf{M}_t(z;\mathfrak{rv})+\left[-\frac{z}{2}\mathfrak{rv}_t(z)-\frac{\nu}{2}z\partial_z\mathfrak{rv}_t(z)
    -
    \frac{z^2}{2}\partial_{z}^2\mathfrak{rv}_t(z)\right]\mathrm{d}t
\end{equation}
where $\mathsf{M}\in \mathrm{C}(\mathbb{R}_+;\mathsf{H}(\mathbb{C}))$ and for all $z\in \mathbb{C}$, $t\mapsto \mathsf{M}_t(z;\mathfrak{rv})$ are  continuous local martingales with covariation, for $z,w \in \mathbb{C}$,
\begin{equation}
\langle \mathsf{M}_t(z;\mathfrak{rv}),\mathsf{M}_t(w;\mathfrak{rv})\rangle=\frac{zw}{z-w}\int_0^t\left(\mathfrak{rv}_s(z)\partial_w\mathfrak{rv}_s(w)-\mathfrak{rv}_s(w)\partial_z\mathfrak{rv}_s(z)\right)\mathrm{d}s.
\end{equation}
Moreover, if $\mathfrak{hp}_{0;N} \overset{N \to \infty}{\longrightarrow} \mathfrak{E}_{\boldsymbol{v}}$ in $\mathfrak{LP}$, then as $N\to \infty$,
\begin{equation*}
\mathfrak{hp}_{\bullet;N}\overset{\mathrm{d}}{\longrightarrow} \mathfrak{hp}_\bullet,
\end{equation*}
in $\mathrm{C}(\mathbb{R}_+;\mathfrak{LP})$ and  $(\mathfrak{hp}_t)_{t\ge 0}$ is a Markov process with values in $\mathfrak{LP}$ which solves the following stochastic partial differential equation 
\begin{equation}\label{HPspde}
\mathrm{d}\mathfrak{hp}_t(z)=\mathrm{d}\mathsf{M}_t(z;\mathfrak{hp})+\left[-(2\Im(\mathfrak{s})z+z^2)\mathfrak{hp}_t(z)-2\Re(\mathfrak{s})z\partial_z\mathfrak{hp}_t(z)-z^2\partial_z^2\mathfrak{hp}_t(z)\right]\mathrm{d}t
\end{equation}
where $\mathsf{M}\in \mathrm{C}(\mathbb{R}_+;\mathsf{H}(\mathbb{C}))$ and for all $z\in \mathbb{C}$, $t\mapsto \mathsf{M}_t(z;\mathfrak{hp}_t)$ are continuous local martingales with covariation, for $z,w \in \mathbb{C}$,
\begin{equation}
\langle \mathsf{M}_t(z;\mathfrak{hp}),\mathsf{M}_t(w;\mathfrak{hp})\rangle=2\frac{zw}{z-w}\int_0^t\left(\mathfrak{hp}_s(z)\partial_w\mathfrak{hp}_s(w)-\mathfrak{hp}_s(w)\partial_z\mathfrak{hp}_s(z)\right)\mathrm{d}s.
\end{equation}
Finally, for $\nu>-1$ and any $\mathfrak{rv}_0\in \mathfrak{LP}_+$ we have, as $t \longrightarrow \infty$,
\begin{equation}\label{ConvToEqIB}
\mathfrak{rv}_t \overset{\mathrm{d}}{\longrightarrow} \mathbb{B}_\nu , \ \ \textnormal{in } \mathfrak{LP}_+, 
\end{equation}
while for $\mathfrak{s}\in \mathbb{R}$, with $\mathfrak{s}>-1/2$, and any $\mathfrak{hp}_0\in \mathfrak{LP}$, as $t \to \infty$,
\begin{equation}\label{ConvToEqHP}
\mathfrak{hp}_t \overset{\mathrm{d}}{\longrightarrow} \mathbb{HP}_\mathfrak{s} , \ \ \textnormal{in } \mathfrak{LP}.
\end{equation}
\end{thm}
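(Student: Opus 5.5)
The proof splits into two parts. First, we take from \cite{AM24} the particle-level results for the Rider-Valk\'o model \eqref{RVmodel} and the Hua-Pickrell diffusions \eqref{Hua-PickrellIntro}. These are the analogues of the first half of Theorem \ref{ThmMainGLN}. There exist Feller diffusions $\mathbf{Z}^t$ on $\Upsilon_+$ and $\mathbf{Y}^t$ on $\Upsilon$. Whenever the rescaled initial data converge, the rescaled particle configurations converge in distribution in $\mathrm{C}(\mathbb{R}_+;\Upsilon_+)$, respectively $\mathrm{C}(\mathbb{R}_+;\Upsilon)$. The state space $\Upsilon$ carries the extra coordinates $\gamma=\lim N^{-1}\sum_i \mathsf{y}_i$ and $\delta=\lim N^{-2}\sum_i \mathsf{y}_i^2$.

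Second, we transfer this to the functions. The maps $\boldsymbol{v}\mapsto \mathfrak{E}^+_{\boldsymbol{v}}$ and $\boldsymbol{u}\mapsto\mathfrak{E}_{\boldsymbol{u}}$ are homeomorphisms onto $\mathfrak{LP}_+$ and $\mathfrak{LP}$. This is the classical Laguerre--P\'olya characterisation. For the $\mathfrak{hp}$ case, note that $\prod_i(1-z\mathsf{y}_i/N)=\exp(-z\sum_i \mathsf{y}_i/N - \tfrac{z^2}{2}\sum_i \mathsf{y}_i^2/N^2+\dots)$ times the Weierstrass factors, which matches the normalisation $\mathfrak{E}_{\boldsymbol{u}}$ up to the constant in front of $\delta$. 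Since $\mathfrak{rv}_{t;N}=\mathfrak{E}^+$ evaluated at the rescaled configuration, the hypothesis $\mathfrak{rv}_{0;N}\to\mathfrak{E}^+_{\boldsymbol v}$ translates exactly into convergence of the initial data. The continuous mapping theorem then yields $\mathfrak{rv}_{\bullet;N}\to\mathfrak{rv}_\bullet=\mathfrak{E}^+_{\mathbf{Z}^\bullet}$, and likewise for $\mathfrak{hp}$. The Markov property is inherited through the homeomorphism.

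The SPDEs come from It\^o's formula at finite $N$ followed by a limit. Apply It\^o to $\log\mathfrak{rv}_{t;N}(z)=\sum_i\log(1-\mathsf{z}_iz/N)$ using \eqref{RVmodel}. Each particle contributes $-\frac{z}{N-\mathsf{z}_iz}\mathrm{d}\mathsf{z}_i-\frac{z^2}{2(N-\mathsf{z}_iz)^2}\mathsf{z}_i^2\mathrm{d}t$. The interaction sum symmetrises using the identity
\[
\frac{ab}{a-b}\Big(\frac{1}{1-a}-\frac{1}{1-b}\Big)=\frac{ab}{(1-a)(1-b)},
\]
with $a=\mathsf{z}_iz/N$ and $b=\mathsf{z}_jz/N$. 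One checks that the resulting drift equals $\big[-\frac{z}{2}\mathfrak{rv}-\frac{\nu}{2}z\partial_z\mathfrak{rv}-\frac{z^2}{2}\partial_z^2\mathfrak{rv}\big]_{t;N}$, exactly at finite $N$. The $-\frac{z}{2}\mathfrak{rv}$ term comes from the $\frac12\mathrm{d}t$ drift, via $-\frac{z}{2N}\sum_i(1-\mathsf{z}_iz/N)^{-1}\mathfrak{rv}$. The diagonal It\^o term combines with the double sum to produce $\partial_z^2$, since $\partial_z^2\mathfrak{rv}/\mathfrak{rv}=(\sum_i a_i')^2-\sum_i (a_i')^2$ with $a_i'=-\frac{\mathsf{z}_i/N}{1-\mathsf{z}_iz/N}$. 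There may be an $O(1/N)$ remainder, which vanishes uniformly on compacts. The martingale part is $\mathsf{M}_{t;N}(z)=-\sum_i\int\frac{z\mathsf{z}_i}{N-z\mathsf{z}_i}\mathfrak{rv}\,\mathrm{d}\mathsf{w}_i$. Its covariation is $\sum_i \frac{zw\,\mathsf{z}_i^2/N^2}{(1-z\mathsf{z}_i/N)(1-w\mathsf{z}_i/N)}\mathfrak{rv}(z)\mathfrak{rv}(w)$, and a partial fraction in $i$ turns this exactly into $\frac{zw}{z-w}(\mathfrak{rv}(z)\partial_w\mathfrak{rv}(w)-\mathfrak{rv}(w)\partial_z\mathfrak{rv}(z))$. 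The Hua-Pickrell case is identical. There the diffusion coefficient $2(\mathsf{y}^2+1)$ gives the factor $2$, and the $+1$ parts together with the $\Im(\mathfrak{s})$ drift produce the zeroth order terms $-(2\Im(\mathfrak{s})z+z^2)\mathfrak{hp}$.

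To pass to the limit, set $\mathsf{M}_{t;N}(z)=\mathfrak{rv}_{t;N}(z)-\mathfrak{rv}_{0;N}(z)-\int_0^t D[\mathfrak{rv}_{s;N}](z)\,\mathrm{d}s$, where $D$ is the drift operator. Both $D$ and the covariation functional are continuous maps on $\mathsf{H}(\mathbb{C})$, by Cauchy estimates. By Skorokhod representation, $\mathsf{M}_{\bullet;N}\to\mathsf{M}_\bullet$ in $\mathrm{C}(\mathbb{R}_+;\mathsf{H}(\mathbb{C}))$. The main obstacle is showing that the limit is a continuous local martingale with the stated covariation. For this we localise with stopping times $\tau_R=\inf\{t:\sup_{|z|\le R}|\mathfrak{rv}_t(z)|\ge R\}$, chosen as continuity points. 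The covariation density is uniformly bounded before $\tau_R$, so the stopped martingales are uniformly integrable. Hence $\mathsf{M}_{t\wedge\tau_R}(z)$ and $\mathsf{M}_{t\wedge\tau_R}(z)\mathsf{M}_{t\wedge\tau_R}(w)-\langle\cdot\rangle$ are limits of martingales, and therefore martingales.

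Finally, the convergence to equilibrium in \eqref{ConvToEqIB} and \eqref{ConvToEqHP} follows from \cite{AM24}. There the laws of $\mathbf{Z}^t$ and $\mathbf{Y}^t$ converge, from any initial point, to the laws of the inverse Bessel and Hua-Pickrell configurations, with $\gamma$ equal to the sum, respectively the principal-value sum, and $\delta$ equal to the sum of squares. Under these identifications $\mathfrak{E}^+$ and $\mathfrak{E}$ reduce to $\mathbb{B}_\nu$ and $\mathbb{HP}_\mathfrak{s}$, and we conclude by continuity of the homeomorphism.
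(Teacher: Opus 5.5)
Your route is the paper's: take the Feller limits on $\Upsilon_+$ and $\Upsilon$ and the convergence to equilibrium from \cite{AM24}, transport them through the homeomorphisms $\boldsymbol{v}\mapsto\mathfrak{E}^+_{\boldsymbol{v}}$ and $\boldsymbol{u}\mapsto\mathfrak{E}_{\boldsymbol{u}}$, derive the finite-$N$ equations by It\^o's formula, and pass to the limit in a Skorokhod coupling using Cauchy estimates. Your remark on the constant in front of $\delta$ is correct. The Gaussian factor that appears in limits of $\prod_i(1-za_i)$ is $\mathrm{e}^{-\frac12(\delta-\sum_i[(x_i^+)^2+(x_i^-)^2])z^2}$, so $\mathfrak{E}_{\boldsymbol{u}}$ needs a $\tfrac12$ to be continuous; this is cosmetic.

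Your extra step identifying the limit of $\mathsf{M}^N$ as a local martingale is welcome; the paper only notes that $\mathsf{M}^N$ and the covariations converge. However, your localising sequence fails as written. For essentially any initial condition (e.g. $\mathfrak{rv}_0(z)=\mathrm{e}^{-\gamma z}$ with $\gamma>0$, or anything with two or more zeros), $\sup_{|z|\le R}|\mathfrak{rv}_0(z)|\ge R$ for all large $R$. Hence $\tau_R=0$ eventually, $\tau_R\not\to\infty$, and the localisation is vacuous. You need to decouple radius and level. Fix $\rho$ with $z,w$ well inside $\{|z|\le\rho\}$ and use $\inf\{t:\sup_{|z|\le\rho}|\mathfrak{rv}_t(z)|\ge R\}$ with $R\to\infty$ along continuity levels. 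Alternatively, stop when $\boldsymbol{\gamma}_{RV}(t)\ge R$ and use $|\mathfrak{E}^+_{\boldsymbol{v}}(z)|\le\mathrm{e}^{3\gamma|z|}$ (similarly with $|\gamma|+\delta$ for $\mathfrak{hp}$). With this repair your BDG and uniform integrability argument goes through.

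The substantive omission is in the Hua--Pickrell case, which is not identical to the Rider--Valk\'o case at the level of the covariation. After rescaling, the squared diffusion coefficient is $2(\hat{\mathsf{y}}_i^2+N^{-2})$. The $N^{-2}$ part contributes nothing to the drift, since $\mathfrak{hp}_{t;N}$ is affine in each coordinate. It does, however, add to $\frac{\mathrm{d}}{\mathrm{d}t}\langle\mathsf{M}^N_t(z;\mathfrak{hp}),\mathsf{M}^N_t(w;\mathfrak{hp})\rangle$ the term
\[
\frac{2zw}{N^2}\sum_{i=1}^N\frac{\mathfrak{hp}_{t;N}(z)\mathfrak{hp}_{t;N}(w)}{(1-\hat{\mathsf{y}}_iz)(1-\hat{\mathsf{y}}_iw)}.
\]
So the finite-$N$ covariation is not exactly $2\frac{zw}{z-w}(\cdots)$. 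You must show this term vanishes uniformly for $t\le T$ and $z,w$ in compacts; you also need this to keep the covariation density bounded before your stopping time.

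A naive bound fails because of the factors $(1-\hat{\mathsf{y}}_iz)^{-1}$ near the zeros. The paper instead treats each $\mathfrak{hp}_{t;N}(z)/(1-\hat{\mathsf{y}}_iz)$ as an $\mathfrak{LP}$ function with one zero removed and applies $|\mathfrak{E}_{\boldsymbol{u}}(z)|\le\mathrm{e}^{C(|\gamma||z|+\delta|z|^2)}$. This gives a bound of $C_{\mathscr{K}}N^{-1}$ times
\[
\exp\Big(C_{\mathscr{K}}\sup_{t\le T}\Big(\Big|N^{-1}\textstyle\sum_i\mathsf{y}_i^{(N)}\Big|+N^{-1}\max_i|\mathsf{y}_i^{(N)}|+N^{-2}\sum_i(\mathsf{y}_i^{(N)})^2\Big)\Big),
\]
which tends to $0$ by the convergence in $\mathrm{C}(\mathbb{R}_+;\Upsilon)$.

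Finally, the drift is not exact at finite $N$ in either model. The $\tfrac12\mathrm{d}t$ term in \eqref{RVmodel} yields $-\frac{z}{2}\mathfrak{rv}_{t;N}+\frac{z^2}{2N}\partial_z\mathfrak{rv}_{t;N}$, and the Hua--Pickrell drift has explicit $O(1/N)$ corrections. These vanish uniformly on compacts, as you anticipated.
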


For $\mathfrak{s}=0$, $(\mathfrak{hp}_t)_{t\ge 0}$ gives the first natural dynamical model of the stochastic zeta function of the sine point process \cite{ChhaibiNajnudelNikeghbali,ValkoVirag2}. A more well-known dynamic related to the sine point process is the bulk limit of Dyson Brownian motion \cite{KatoriTanemura, Tsai,Osada1,Osada2,Osada3,KawamotoOsada, OsadaTanemura, SuzukiErgodicity,SuzukiCurvature}. However, it is unclear how one could associate to it some natural evolution on entire functions. In a different direction, for macroscopic/mesoscopic (unlike the microscopic regime we are concerned with) limits of various random matrix dynamics that have been intensively studied in the literature recently in relation to log-correlated fields and Gaussian multiplicative chaos, see \cite{BourgadeFalconet,BourgadeCipolloniHuang,Keles}.

Theorem \ref{ThmMainStat} gives a new dynamical way to study properties of the entire functions $\mathbb{B}_\nu$ and $\mathbb{HP}_\mathfrak{s}$ e.g. formally the moments are the $t \to \infty $ limits of solutions to systems of differential equations arising by taking expectations in \eqref{RVspde} and \eqref{HPspde}. 

It is interesting to note that, as we have shown in \cite{AM24}, the limiting rescaled dynamics of Rider-Valk\'{o} \eqref{RVmodel} or equivalently the reciprocals of zeros of $(\mathfrak{rv}_t)_{t\ge0} $ also solve \eqref{ISDEGL} with the identification $\nu=-\theta$. In particular, the equation \eqref{ISDEGL} on its own cannot distinguish between dynamics which are qualitatively distinct. On the other hand, the equations \eqref{GLequation} and \eqref{RVspde} for $(\mathfrak{gl}_t)_{t\ge 0}$ and $(\mathfrak{rv}_t)_{t\ge 0}$, which in principle contain more information, are actually different and would be interesting if they can characterise their solutions. A different way to remedy the non-uniqueness of \eqref{ISDEGL} is to look for uniqueness within ``rigid-path" solutions to it i.e. solutions which follow deterministic paths $(\boldsymbol{\alpha}_i(\bullet))_{i\in \mathbb{N}}$ with lower order random fluctuations. In particular the limiting Rider-Valk\'{o} dynamics and $\mathsf{GL}_N(\mathbb{C})$ dynamics, starting from the same initial condition, would then follow very different deterministic paths, as can also be seen from simulations. We have some arguments in the direction of ``rigid-path" uniqueness but currently no robust method to show that from generic initial conditions any of the solutions we constructed are indeed ``rigid-path" solutions (we believe they are) and to find the $(\boldsymbol{\alpha}_i(\bullet))_{i\in \mathbb{N}}$ explicitly. It is not hard to show that rigidity of the paths boils down to fixed-time rigidity and a sufficiently strong modulus of continuity control. Understanding this conjectural non-equilibrium path-rigidity is an outstanding open problem.

Finally, it is worth mentioning that convergence to equilibrium for $(\mathfrak{hp}_t)_{t\ge 0}$ also holds for $\mathfrak{s}\in \mathbb{C}$, with $\Re(\mathfrak{s})>-1/2$, however the description of the limiting entire function is not as explicitly known as for $\mathbb{HP}_\mathfrak{s}$ with real $\mathfrak{s}$.

The dynamical Cauchy model $(\mathfrak{hp}_t)_{t\ge0}$ is significantly more complicated than $(\mathfrak{gl}_t)_{t\ge0}$ and $(\mathfrak{rv}_t)_{t\ge0}$ and we do not have as complete a description for the dynamics of its zeros. Our final task in this paper is to prove some rather non-trivial conditional results and offer a precise conjecture. We need to introduce the following assumption. 
\begin{ass}\label{Assumption}
We assume that the ordered zeroes $(\mathfrak{y}_i(t))_{i\in \mathbb{Z}\backslash \{0\}}$ of $\mathfrak{hp}_t$ satisfy almost surely for all $t\ge 0$,
\begin{equation*}
\cdots <\mathfrak{y}_{-3}(t)<\mathfrak{y}_{-2}(t)<\mathfrak{y}_{-1}(t)<0< \mathfrak{y}_1(t)<\mathfrak{y}_2(t)<\mathfrak{y}_3(t)<\cdots, 
\end{equation*}
with no explosions to $\pm \infty$ in finite time.
\end{ass}

We believe, but cannot prove, that Assumption \ref{Assumption} is true; in fact we believe it is true in substantial generality. A sufficient condition may simply be its validity for the initial condition at $t=0$. 

Let us write $(\mathsf{y}_i(t))_{i\in \mathbb{Z}\backslash \{0\},t\in \mathbb{R}_+}$ for the reciprocals of the zeros of $(\mathfrak{hp}_t)_{t\ge 0}$:
\begin{equation}\label{RecipZeros}
\mathsf{y}_i(t)\overset{\mathrm{def}}{=}\frac{1}{\mathfrak{y}_i(t)}, \ \ \forall t\ge0, \ i\in \mathbb{Z}\backslash \{0\}.
\end{equation}
We then have the following equations describing their dynamics.

\begin{prop}\label{HPISDE} Let $\mathfrak{s}\in \mathbb{C}$.
Under Assumption \ref{Assumption} for the zeroes of $(\mathfrak{hp}_t)_{t\ge 0}$, we have
\begin{align*}
&\mathrm{d}\mathsf{y}_i(t)
  = 
 \sqrt{2}\mathsf{y}_i(t)
 \mathrm{d}\mathsf{w}_i(t)
-2\left(\Re{(\mathfrak{s})}
+1\right)\mathsf{y}_i(t)\mathrm{d}t
  +
 2
 \sum_{j\in\mathbb{Z}\setminus\{0,i\}}
\frac{\mathsf{y}_j^2(t)}{\mathsf{y}_i(t)-\mathsf{y}_j(t)}
\mathrm{d}t
+2
\boldsymbol{\gamma}_{HP}(t)
\mathrm{d}t,  i\in\mathbb{Z}\setminus\{0\},\\
&\mathrm{d}\boldsymbol{\gamma}_{HP}(t)
=
-\partial_z \mathrm{d}\mathsf{M}_t(z;\mathfrak{hp})\big|_{z=0}
+
2\left(\Im{(\mathfrak{s})}-\Re{(\mathfrak{s})}\boldsymbol{\gamma}_{HP}(t)\right)\mathrm{d}t.
\end{align*} 
\end{prop}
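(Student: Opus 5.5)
The plan is to obtain the equations directly from the stochastic PDE \eqref{HPspde} and its covariation structure, treating each zero $\mathfrak{y}_i(t)$ as an implicitly defined semimartingale. Under Assumption \ref{Assumption}, each zero of $\mathfrak{hp}_t$ is simple and stays away from $0$, so $\partial_z\mathfrak{hp}_t(\mathfrak{y}_i(t))\neq 0$. By the holomorphic implicit function theorem, $\mathfrak{y}_i(t)$ is then locally a smooth functional of $\mathfrak{hp}_t$ in $\mathsf{H}(\mathbb{C})$.

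\textbf{Step 1: a generalised It\^o--Wentzell formula for the zeros.} I will apply a generalised It\^o (It\^o--Wentzell) formula to the identity $0=\mathfrak{hp}_t(\mathfrak{y}_i(t))$. Since $\mathsf{M}\in\mathrm{C}(\mathbb{R}_+;\mathsf{H}(\mathbb{C}))$ and the drift in \eqref{HPspde} is holomorphic, the Cauchy integral formula on a small circle around $\mathfrak{y}_i$ gives semimartingale decompositions for $\partial_z^k\mathfrak{hp}_t$ with locally uniform control. After localising by stopping times at which $|\partial_z\mathfrak{hp}_t(\mathfrak{y}_i)|$ falls below $\epsilon$, this justifies
\begin{equation*}
0=\mathrm{d}\mathsf{M}_t(\mathfrak{y}_i)+\mathrm{Drift}_t(\mathfrak{y}_i)\mathrm{d}t+\partial_z\mathfrak{hp}_t(\mathfrak{y}_i)\mathrm{d}\mathfrak{y}_i+\tfrac12\partial_z^2\mathfrak{hp}_t(\mathfrak{y}_i)\mathrm{d}\langle\mathfrak{y}_i\rangle+\mathrm{d}\langle\partial_z\mathsf{M}(\mathfrak{y}_i),\mathfrak{y}_i\rangle .
\end{equation*}

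\textbf{Step 2: martingale parts.} Step 1 gives the martingale part $-\mathrm{d}\mathsf{M}_t(\mathfrak{y}_i)/\partial_z\mathfrak{hp}_t(\mathfrak{y}_i)$. Because $\mathfrak{hp}_t$ vanishes at the zeros, the covariation formula collapses at $z=\mathfrak{y}_i$, $w=\mathfrak{y}_j$:
\begin{itemize}
\item for $i\neq j$ the cross-variation is $0$;
\item on the diagonal it equals $2\mathfrak{y}_i^2(\partial_z\mathfrak{hp})^2$.
\end{itemize}
Hence $\mathrm{d}\langle\mathfrak{y}_i\rangle=2\mathfrak{y}_i^2\mathrm{d}t$, and L\'evy's characterisation produces independent Brownian motions $\mathsf{w}_i$.

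For the cross term $\mathrm{d}\langle\partial_z\mathsf{M}(\mathfrak{y}_i),\mathfrak{y}_i\rangle$, I differentiate the covariation in $w$ and then set $z=w=\mathfrak{y}_i$ using the analytic extension to the diagonal. This yields an expression in $\partial_z\mathfrak{hp}$ and $\partial_z^2\mathfrak{hp}$ at the zero.

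\textbf{Step 3: passing to reciprocals.} I then set $\mathsf{y}_i=1/\mathfrak{y}_i$ and use $\mathrm{d}\mathsf{y}_i=-\mathrm{d}\mathfrak{y}_i/\mathfrak{y}_i^2+\mathrm{d}\langle\mathfrak{y}_i\rangle/\mathfrak{y}_i^3$. The martingale part becomes $\sqrt2\,\mathsf{y}_i\mathrm{d}\mathsf{w}_i$, and collecting the drift terms should produce the constants $-2(\Re(\mathfrak{s})+1)\mathsf{y}_i$. The remaining drift is a multiple of $\mathfrak{y}_i^2\,\partial_z^2\mathfrak{hp}_t(\mathfrak{y}_i)/\partial_z\mathfrak{hp}_t(\mathfrak{y}_i)$ together with the $(2\Im(\mathfrak{s})z+z^2)\mathfrak{hp}$ term, which vanishes at the zero.

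\textbf{Step 4: the Hadamard ratio.} I evaluate $\partial_z^2\mathfrak{hp}/\partial_z\mathfrak{hp}$ at a simple zero through the Hadamard factorisation $\mathfrak{hp}_t=\mathfrak{E}_{\boldsymbol{u}(t)}$. At a simple zero this ratio equals $2\,\partial_z\log\bigl(\mathfrak{hp}_t(z)/(1-z\mathsf{y}_i)\bigr)|_{z=\mathfrak{y}_i}$. That quantity is a sum over $j\neq i$ of terms $-\mathsf{y}_j/(1-\mathfrak{y}_i\mathsf{y}_j)+\mathsf{y}_j$, plus the exponential-factor contributions $-\gamma$ and a term linear in $\mathfrak{y}_i$ coming from the Gaussian coefficient.

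Rewriting in the $\mathsf{y}$ variables gives $\sum_{j}\mathsf{y}_j^2/(\mathsf{y}_i-\mathsf{y}_j)$ plus a $\boldsymbol{\gamma}_{HP}$ term. Here I identify $\boldsymbol{\gamma}_{HP}(t)=-\partial_z\mathfrak{hp}_t(0)$ (recall $\mathfrak{hp}_t(0)=1$, since $\mathsf{M}_t(0)\equiv 0$ because the covariation vanishes at $z=0$). The equation for $\boldsymbol{\gamma}_{HP}$ then follows by applying $\partial_z|_{z=0}$ to \eqref{HPspde}. The drift there reduces to $-2\Im(\mathfrak{s})\cdot 1-2\Re(\mathfrak{s})\partial_z\mathfrak{hp}_t(0)$, with the sign flipped, which gives $2(\Im(\mathfrak{s})-\Re(\mathfrak{s})\boldsymbol{\gamma}_{HP})$.

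\textbf{Main obstacle.} I expect the hardest part to be Step 4 together with the justification in Step 1. One issue is the Gaussian coefficient $\delta-\sum_i(x_i^\pm)^2$ of $\mathfrak{E}_{\boldsymbol{u}}$. I must show it is either identically zero along the dynamics, or that its contribution cancels against the $-z^2\partial_z^2$ drift and the $z^2\mathfrak{hp}$ term. My first attempt is to compute $\tfrac12\partial_z^2\mathfrak{hp}_t(0)$ via \eqref{HPspde} and compare it with $\tfrac12\boldsymbol{\gamma}_{HP}^2$ minus the sum of squares.

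A second issue is that the sum $\sum_j \mathsf{y}_j^2/(\mathsf{y}_i-\mathsf{y}_j)$ converges only because $\sum_j\mathsf{y}_j^2<\infty$. Exchanging this infinite sum with differentiation therefore needs the locally uniform convergence of the Hadamard product, which holds in $\mathfrak{LP}$.
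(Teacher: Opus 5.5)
\textbf{Where the argument breaks.} Your It\^o--Wentzell treatment of $0=\mathfrak{hp}_t(\mathfrak{y}_i(t))$ is a legitimate substitute for the paper's residue computation with $\psi^{HP}_t=\partial_z\log\mathfrak{hp}_t$. The two are equivalent, since $\mathfrak{y}_i(t)=\frac{1}{2\pi\mathrm{i}}\oint z\,\psi^{HP}_t(z)\,\mathrm{d}z$ over a small circle. Your derivation of the $\boldsymbol{\gamma}_{HP}$ equation is the paper's. The genuine gap is the Gaussian coefficient, which you flag but do not resolve. Carried out correctly, Steps 1--4 give
\begin{equation*}
\mathrm{d}\mathsf{y}_i=\sqrt{2}\mathsf{y}_i\mathrm{d}\mathsf{w}_i-2(\Re(\mathfrak{s})+1)\mathsf{y}_i\mathrm{d}t+2\sum_{j\in\mathbb{Z}\setminus\{0,i\}}\frac{\mathsf{y}_j^2}{\mathsf{y}_i-\mathsf{y}_j}\mathrm{d}t+2\boldsymbol{\gamma}_{HP}\mathrm{d}t+\frac{2\kappa(t)}{\mathsf{y}_i}\mathrm{d}t,
\end{equation*}
where $\kappa(t)=\boldsymbol{\delta}(t)-\sum_{j}\mathsf{y}_j^2(t)\ge 0$ is the Gaussian coefficient, normalised so that $\partial_z\log\mathfrak{hp}_t$ contains $-\kappa z$. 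Nothing in the single-zero computation removes this last term.

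Your first suggestion, a cancellation against the $-z^2\partial_z^2$ and $z^2\mathfrak{hp}$ drifts, does not happen:
\begin{itemize}
\item the $(2\Im(\mathfrak{s})z+z^2)\mathfrak{hp}_t$ term vanishes at the zero;
\item the $-z^2\partial_z^2\mathfrak{hp}_t$ drift exactly cancels the correction $\frac12\partial_z^2\mathfrak{hp}_t\,\mathrm{d}\langle\mathfrak{y}_i\rangle$;
\item the cross term is $-(2\mathfrak{y}_i\partial_z\mathfrak{hp}_t+\mathfrak{y}_i^2\partial_z^2\mathfrak{hp}_t)(\mathfrak{y}_i)\,\mathrm{d}t$.
\end{itemize}
So the drift of $\mathfrak{y}_i$ is $2(\Re(\mathfrak{s})+1)\mathfrak{y}_i+\mathfrak{y}_i^2\partial_z^2\mathfrak{hp}_t/\partial_z\mathfrak{hp}_t$, and its Hadamard expansion contains $-2\kappa\mathfrak{y}_i^3$. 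A minor bookkeeping point: the It\^o terms of Step 3 alone give $-2\Re(\mathfrak{s})\mathsf{y}_i$. The remaining $-2\mathsf{y}_i$ comes from the factor $\mathrm{e}^{z\mathsf{y}_i}$ that survives in $\mathfrak{hp}_t(z)/(1-z\mathsf{y}_i)$, so keep it in Step 4.

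Your second suggestion, computing $\partial_z^2\mathfrak{hp}_t(0)$ from \eqref{HPspde}, only yields an SDE for $\boldsymbol{\delta}(t)$. At a fixed time, comparing it with $\boldsymbol{\gamma}_{HP}^2$ and $\sum_j\mathsf{y}_j^2$ just reproduces the definition of $\kappa$, and gives no dynamical information about $\sum_j\mathsf{y}_j^2$. Also, $\kappa(0)$ may be positive, so no preservation argument can give $\kappa\equiv 0$. What is true, and sufficient since $\kappa\ge 0$, is $\int_0^t\kappa(s)\,\mathrm{d}s=0$ for all $t$.

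\textbf{The missing idea} is the paper's summation argument. Because $\kappa$ enters $\mathrm{d}\mathsf{y}_i$ as $2\kappa/\mathsf{y}_i$, it enters $\mathrm{d}(\mathsf{y}_i^2)$ as $4\kappa\,\mathrm{d}t$, with a coefficient independent of $i$. Apply It\^o's formula to $\mathsf{y}_i^2$ and sum over $1\le i\le K$. This produces $4K\int_0^t\kappa(s)\,\mathrm{d}s$, while every other term is $o(K)$ almost surely:
\begin{itemize}
\item $\sum_{i\le K}\mathsf{y}_i^2(t)\le\boldsymbol{\delta}(t)$.
\item The interactions among indices $i,j\le K$ symmetrise to $-\frac12\sum_{i\neq j}\mathsf{y}_i\mathsf{y}_j\ge-\frac12\bigl(\sum_{i\le K}\mathsf{y}_i\bigr)^2$.
\item The interactions with $j\notin\{1,\dots,K\}$ are positive, because there $\mathsf{y}_i>0$ and $\mathsf{y}_i>\mathsf{y}_j$; they can therefore be dropped from the upper bound on $K\int_0^t\kappa$.
\item The martingale is controlled by Burkholder--Davis--Gundy.
\item $K^{-1/2}\sup_{s\le t}\sum_{i\le K}\mathsf{y}_i(s)\to 0$, by Cauchy--Schwarz and the uniform smallness of $\sup_{s\le t}\sum_{i>n}\mathsf{y}_i^2(s)$. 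This disposes of $\bigl(\sum_{i\le K}\mathsf{y}_i\bigr)^2$ and $\boldsymbol{\gamma}_{HP}\sum_{i\le K}\mathsf{y}_i$.
\end{itemize}
Dividing by $K$ and letting $K\to\infty$ gives $\int_0^t\kappa(s)\,\mathrm{d}s=0$, so the extra drift integrates to zero. Without this step, or an equivalent one, your argument proves only the equation with the additional $2\kappa/\mathsf{y}_i$ drift, not the stated one.
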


For $\mathfrak{s}\in \mathbb{R}$, $\mathfrak{s}>-1/2$, starting $(\mathfrak{hp}_t)_{t\ge 0}$ from the invariant measure, for any fixed time $t\ge 0$, it is known that $\boldsymbol{\gamma}_{HP}(t)$ has the same distribution as the principal value sum of the $\mathsf{y}_i$'s. If a dynamical extension of this result holds one then gets a closed equation for the $\mathsf{y}_i$'s. We thus naturally arrive at the following conjecture. 

\begin{conj}\label{ConjHP}
Let $\mathfrak{s} > -\tfrac{1}{2}$ and $\mathfrak{hp}_0\overset{\mathrm{d}}{=}\mathbb{HP}_\mathfrak{s}$. Then, the reciprocals $(\mathsf{y}_{i}(\bullet))_{i\in \mathbb{Z}\backslash\{0\}}$ of zeroes of $\mathfrak{hp}_{\bullet}$ satisfy the equation
\begin{equation*}
\mathrm{d}\mathsf{y}_i(t)
= \sqrt{2}\mathsf{y}_i(t)\mathrm{d}\mathsf{w}_i(t)
- 2\mathfrak{s}\mathsf{y}_i(t)\mathrm{d}t
+ 2 \lim_{\varepsilon\to 0}
\sum_{\substack{|\mathsf{y}_j(t)|>\varepsilon\\ j\in \mathbb{Z}\setminus\{0,i\}}}
\frac{\mathsf{y}_i(t)\mathsf{y}_j(t)}{\mathsf{y}_i(t)-\mathsf{y}_j(t)}\mathrm{d}t, \ \ \forall {i}\in \mathbb{Z}\backslash \{0\}.
\end{equation*}
\end{conj}

It is plausible that the conjecture is true also from general initial conditions which qualitatively look like the equilibrium measure. For initial conditions far from equilibrium or for non-real parameter $\mathfrak{s}\in \mathbb{C}$ it is less clear to us what happens.

\subsection{On the proof}

Let us finally comment on the proof. The starting point in our analysis is the observation that singular values of multiplicative Brownian motion on $\mathsf{GL}_N(\mathbb{C})$ satisfy a certain consistency relation as $N$ varies which allows us to make use of the method of intertwiners of Borodin and Olshanski \cite{BorodinOlshanskiMarkov,OlshanskiICM,OlshanskiLectureNotes}. This is a general algebraic formalism which constructs an abstract Feller-Markov process on the boundary of a projective system from a tower of consistent Feller processes on each layer, see \cite{BorodinOlshanskiMarkov,OlshanskiLectureNotes,AM24} for detailed expositions. The method was initially applied to discrete dynamics on partitions \cite{BorodinOlshanskiMarkov,BorodinOlshanskiThoma,BorodinGorin,OlshanskiApproximation,OlshanskiLectureNotes,OlshanskiICM} and more recently it was realised that it can be successfully applied to random matrix dynamics \cite{HuaPickrellDiffusions,AM24,BufetovKawamoto,BufetovKawamoto2}. Once this abstract process is constructed, the difficulty becomes describing it explicitly. In the random matrix context this was first achieved in \cite{AM24} for the Rider-Valk\'{o} model \eqref{RVmodel} and in this paper we go one step further for Brownian motion on $\mathsf{GL}_N(\mathbb{C})$ for which an additional property in Gibbs resampling can be established. The description of the dynamics of the limiting rescaled characteristic polynomials in terms of a closed spde is new for all three models we consider in this paper.

Ahn in his proof of convergence of \eqref{AhnRescaledProcess} with the special initial condition of the identity matrix for the $\mathsf{GL}_N(\mathbb{C})$ Brownian motion uses the Gibbs property (also present for general initial conditions) and explicit computations for the Laplace transform of the finite-dimensional distributions of \eqref{AhnRescaledProcess} and associated determinantal correlation kernel. The correlation kernel from the identity is quite special as one can relate it, see \cite{JonesOConnell,Katori,KatoriTakahashi}, to the correlation kernel of standard Dyson Brownian motion from equidistant initial condition \cite{JohanssonUniversality,JohanssonNumberVariance, KatoriTanemura} which is very amenable to asymptotic analysis \cite{Ahn,Mustazee}. In fact, it is an open problem to obtain the limit of the correlations from other initial conditions. Our proof, on the other hand requires minimal computations but granted it relies on the machinery built in \cite{AM24} which is used as a black box.

Regarding obtaining the singular log-interacting SDE \eqref{ISDEGL} for the dynamics of $(\mathsf{x}_i)_{i\in \mathbb{N}}$ we do that via the equation for $(\mathfrak{gl}_t)_{t\ge 0}$ or more precisely its logarithmic derivative. Schematically, we follow
\begin{equation}\label{Scheme}
\textnormal{Eq. } \eqref{GLequation} \textnormal{ for } \mathfrak{gl}_t \overset{\psi^{GL}_t(z)=\partial_z\log \mathfrak{gl}_t(z)}{\longrightarrow} \textnormal{Eq. } \eqref{psiEqu} \textnormal{ for } \mathfrak{\psi}_t^{GL}  \overset{\textnormal{Non-col.} \& \textnormal{Res.calc.}}{\longrightarrow} \textnormal{Eq. } \eqref{ISDEGL} \textnormal{ for } (\mathsf{x}_i)_{i\in \mathbb{N}}.
\end{equation}
A crucial non-intersection property of the paths is obtained from certain Lyapunov functions exactly as in \cite{AM24} or in principle the Gibbs resampling could be used, see Remarks \ref{RmkGibbsInitial} and \ref{RmkNonIntersectionGibbs} for some subtleties. Our computations, based on residue calculus considerations (by virtue of non-intersection we can pick out individual poles), in following the scheme in display \eqref{Scheme} are inspired by the remarkable work \cite{HuangZhang} which realises the Airy line ensemble \cite{CorwinHammond1} as the poles of a stochastic evolution on Nevanlinna functions. Alternatively we could have derived the desired equations as a limit of the finite-dimensional dynamics as we did in \cite{AM24} for the Rider-Valk\'{o} model \eqref{RVmodel}. The same argument could be applied here, see Section 4 therein. However, deriving them already in the limit directly from the bigger object $(\mathfrak{rv}_t)_{t\ge 0}$  is conceptually more attractive and showcases the usefulness of having an equation like \eqref{GLequation}.  

\section{Infinite-dimensional dynamics from Brownian motion on $\mathsf{GL}_N(\mathbb{C})$}

\subsection{Convergence on path space and non-intersection} \label{SubsectionConvPathSpace}

In this subsection we apply our general convergence framework from \cite{AM24} to the model \eqref{GLinitial}. The material here is in some sense standard as virtually identical arguments, for the individual intermediate results, have appeared before and so we will be brief. The main contribution is the non-trivial observation that this framework can also be applied to Brownian motion on $\mathsf{GL}_N(\mathbb{C})$.

We need some standard preliminaries. We refer to Section 2 of \cite{AM24} for more details. The parameter $\theta \in \mathbb{R}$ will be fixed throughout this section and will be suppressed from the notation. Consider the following geometric Brownian motion $(\boldsymbol{g}_N(t))_{t\ge 0}$, the unique strong solution in $\mathbb{R}_+$ of the SDE
\begin{equation}\label{SDEgeomBM}
\mathrm{d}\boldsymbol{g}_N(t)=\boldsymbol{g}_N(t)\mathrm{d}\mathsf{w}(t)+\left(1+\frac{\theta}{2}-N\right)\boldsymbol{g}_N(t)\mathrm{d}t.
\end{equation}
Its transition  density with respect to Lebesgue measure is given by, for $t,x,y >0$,
\begin{equation}\label{GeomBMtransKernel}
\mathsf{q}_t^{(N)}(x,y)=\frac{1}{y\sqrt{2\pi t}} \exp\left(-\frac{\left(\log y-\log x-\left(\frac{1+\theta}{2}-N\right)t\right)^2}{2t}\right).
\end{equation}

We collect the following basic facts about the model \eqref{GLinitial}.

\begin{prop}\label{PropFiniteDimProperties}
Let $\theta \in \mathbb{R}$. The SDE \eqref{GLinitial} has a unique strong solution  from any initial condition in $\mathbf{W}_{N;+}$. Moreover, for $\mathsf{x}^{(N)}(0)\in \mathbf{W}_{N;+}^\circ$ we have almost surely for all $t\ge 0$, $\mathsf{x}^{(N)}(t)\in \mathbf{W}_{N;+}^\circ$. Furthermore, the semigroup $(\mathsf{P}_{t;N})_{t\ge0}$ associated to $\mathsf{x}^{(N)}$ is $\mathrm{C}_0$-Feller continuous on $\mathbf{W}_{N;+}$ and the space of smooth symmetric  functions with compact support on $\mathbb{R}^N$ restricted to $\mathbf{W}_{N;+}$ forms a core for the generator $\mathsf{L}_N$ of $(\mathsf{P}_{t;N})_{t\ge0}$ and is moreover invariant under the action of $\mathsf{L}_N$. Finally, the transition kernel of $\mathsf{P}_{t;N}$ is given explicitly by, for $t>0, \mathbf{x}\in \mathbf{W}_{N;+}^\circ, \mathbf{y}\in \mathbf{W}_{N;+}$,
\begin{equation}\label{TransitionKernel}
\mathsf{P}_{t;N}(\mathbf{x},\mathrm{d}\mathbf{y})=\mathrm{e}^{-\lambda_Nt}\frac{\prod_{1\le i <j \le N}(y_i-y_j)}{\prod_{1\le i <j \le N}(x_i-x_j)}\det\left(\mathsf{q}_t^{(N)}(x_i,y_j)\right)_{i,j=1}^N\mathrm{d}\mathbf{y},
\end{equation}
where $\lambda_N=N(N-1)(3\theta+2-4N)/12$.
\end{prop}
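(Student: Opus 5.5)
The plan is to view \eqref{GLinitial} as an $h$-transform of $N$ independent geometric Brownian motions \eqref{SDEgeomBM} killed at collision, with the Vandermonde determinant $\Delta_N(\mathbf{x})=\prod_{i<j}(x_i-x_j)$ as the harmonic function. Existence and uniqueness of strong solutions from any point of $\mathbf{W}_{N;+}$, and non-collision from the interior, follow from \cite{AGZ,Graczyk-Malecki}. This also follows because $\mathsf{x}^{(N)}$ is a deterministic time change and rescaling of the squared singular values of $\mathsf{Y}_N$, namely \eqref{x-rsc}. Uniqueness up to the first collision time is standard for locally Lipschitz coefficients. From the interior, non-collision can also be seen directly: the process is a Doob transform, so it never reaches $\partial\mathbf{W}_{N;+}$, and it never reaches $0$ because each coordinate is bounded below by a geometric Brownian motion comparison near $0$.

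The core computation is the generator identity. Write
\[
\mathsf{G}_N=\sum_{i=1}^N\Big(\tfrac12 x_i^2\partial_{x_i}^2+\big(1+\tfrac{\theta}{2}-N\big)x_i\partial_{x_i}\Big)
\]
for the generator of $N$ independent copies of \eqref{SDEgeomBM}. I will verify that $\mathsf{G}_N\Delta_N=\lambda_N\Delta_N$. Since $\Delta_N$ is homogeneous of degree $N(N-1)/2$, the Euler part gives the eigenvalue $(1+\theta/2-N)N(N-1)/2$. The second-order part $\tfrac12\sum_i x_i^2\partial_i^2\Delta_N$ is $\tfrac12\sum_i x_i^2\sum_{j\ne k\ne i}2/((x_i-x_j)(x_i-x_k))\cdot\Delta_N$. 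Symmetrising over triples, this reduces to $\tfrac12\sum_{\{i,j,k\}}\cdot 2$ times a constant. It therefore yields a multiple of $\binom N3$ times $\Delta_N$, using the identity $\sum_{\text{cyc}} x_i^2/((x_i-x_j)(x_i-x_k))=1$.

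Collecting terms should give exactly $\lambda_N=N(N-1)(3\theta+2-4N)/12$; I would double-check the arithmetic here, as it is the one place errors creep in. Then
\[
\Delta_N^{-1}(\mathsf{G}_N-\lambda_N)\Delta_N f=\mathsf{G}_N f+\sum_i x_i^2\,\partial_i\log\Delta_N\,\partial_i f,
\]
and $x_i^2\partial_i\log\Delta_N=\sum_{j\neq i}x_i^2/(x_i-x_j)$. This equals $\sum_{j\ne i}x_ix_j/(x_i-x_j)+(N-1)x_i$. The $(N-1)x_i$ term combines with $(1+\theta/2-N)x_i$ to give $\frac{\theta}{2}x_i$, recovering \eqref{GLinitial}. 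By the Karlin–McGregor formula for one-dimensional diffusions killed at collision, the killed semigroup has density $\det(\mathsf{q}^{(N)}_t(x_i,y_j))$. The Doob transform then gives \eqref{TransitionKernel} for $\mathbf{x}$ in the interior. The drift in \eqref{GeomBMtransKernel} is $(1+\theta)/2-N$, consistent with the Itô correction $\tfrac12$ from $1+\theta/2-N$.

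For the Feller property and the core, I will follow Section 2 of \cite{AM24} verbatim, since the structure (an $h$-transform of geometric Brownian motions on the closed chamber) is identical to the Rider–Valkó case. The steps are as follows. First, extend \eqref{TransitionKernel} to boundary points of $\mathbf{W}_{N;+}$ by l'Hôpital-type limits, that is, divided differences of $\mathsf{q}_t$ in the $x$-variables; equivalently, use continuous dependence of the SDE solution on the initial condition. This gives continuity $\mathbf{x}\mapsto\mathsf{P}_{t;N}f(\mathbf{x})$. Second, show that $\mathsf{P}_{t;N}$ maps $\mathrm{C}_0$ into $\mathrm{C}_0$, using that each $\mathsf{x}_i$ is dominated by geometric Brownian motions so mass does not stay near infinity. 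Third, establish strong continuity at $t=0$ from path continuity.

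For the core, note that $\mathsf{L}_N$ acting on symmetric polynomials preserves degree. This holds because $\sum_{j\ne i}x_ix_j/(x_i-x_j)\,\partial_i$ symmetrised is the standard Calogero-type operator mapping symmetric polynomials to symmetric polynomials of the same degree. On smooth symmetric compactly supported functions, the apparent singularity cancels by antisymmetry, which gives invariance. The core property then follows from a standard criterion (invariant dense subspace, \cite{Kallenberg}). I expect the boundary Feller continuity at coinciding or zero coordinates to be the main technical obstacle; I would handle it by citing the identical argument in \cite{AM24}.
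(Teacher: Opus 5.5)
Your route is the same as the paper's. There, the only model-specific input is the $h$-transform identity $\mathsf L_Nf=\Delta_N^{-1}\bigl(\sum_i\mathsf G^{(N)}_{x_i}\bigr)(f\Delta_N)-\lambda_Nf$; the kernel \eqref{TransitionKernel} then comes from Karlin--McGregor, and the Feller and core statements are imported from \cite{AM24}. Your computation is correct: $\lambda_N=\binom N3+\bigl(1+\frac{\theta}{2}-N\bigr)\binom N2=\frac{N(N-1)(3\theta+2-4N)}{12}$. Also, $x_i^2\partial_{x_i}\log\Delta_N$ produces the interaction term plus $(N-1)x_i$, which cancels the $1-N$ in the drift. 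Your $\mathsf G_N$, with drift $(1+\frac{\theta}{2}-N)x_i\partial_{x_i}$, is the correct generator of \eqref{SDEgeomBM}; the paper's displayed $\mathsf G^{(N)}_x$ drops the factor $x$.

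The step that fails as written is the core. The criterion you invoke says that a dense subspace of the domain is a core if it is invariant under the semigroup $\mathsf P_{t;N}$, not under the generator $\mathsf L_N$. Smooth compactly supported symmetric functions are not $\mathsf P_{t;N}$-invariant, because the kernel \eqref{TransitionKernel} is strictly positive on $\mathbf W^\circ_{N;+}$. Your antisymmetry argument does prove the separate claim that this space is $\mathsf L_N$-invariant, and degree preservation on symmetric polynomials is true. But neither fact gives density of the range of $\lambda-\mathsf L_N$ on this space, and polynomials are not even in $\mathrm C_0(\mathbf W_{N;+})$. So you genuinely need an outside argument here, which is exactly what the paper takes from \cite{AM24}. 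One workable route is to lift to $\mathrm e^{-Nt/4}\mathsf Y_N(t/4)$, a diffusion on $\mathbb M_N(\mathbb C)$ with linear coefficients for which $\mathrm C_c^\infty$ is a core. You then average over the bi-unitary action, which commutes with that semigroup.

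A smaller point: for $\mathsf P_{t;N}\mathrm C_0\subset\mathrm C_0$ you need that a process started far out stays far out. That requires a lower bound, not domination from above. For instance, $\mathsf x_1$ has nonnegative interaction drift, so by comparison it dominates the geometric Brownian motion $x_1\exp\bigl(\mathsf w_1(t)+\frac{\theta-1}{2}t\bigr)$. Alternatively, the interactions cancel in $\sum_i\mathsf x_i$, so $\log\sum_i\mathsf x_i$ has bounded coefficients.
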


\begin{proof}
 This is rather standard and an identical result (modulo a different one-dimensional transition kernel in place of $\mathsf{q}_t^{(N)}$ and constant $\lambda_N$) and proof holds for the Rider-Valk\'{o} model \eqref{RVmodel}, see Section 3 of \cite{AM24} for details. The only model specific computation is the following Doob $h$-transform structure of the generator:
 \begin{align*}
 [\mathsf{L}_Nf](x_1,\dots,x_N)&=\prod_{1\le i<j\le N}(x_i-x_j)^{-1}\left[\left(\sum_{i=1}^N\mathsf{G}_{x_i}^{(N)}\right) \left(f(x_1,\dots,x_N)\prod_{1\le i<j\le N}(x_i-x_j)\right)\right]\\
 &-\lambda_N f(x_1,\dots,x_N),
 \end{align*}
 where $\mathsf{G}_x^{(N)}=\frac{x^2}{2}\partial_x^2+(1+\frac{\theta}{2}-N)\partial_x$ is the generator of \eqref{SDEgeomBM}.
\end{proof}

\begin{rmk}
By virtue of the connection of \eqref{XiDef} to (radial) Heckman-Opdam processes \cite{Heckman-Opdam} the proposition above could in principle be derived by translating the results of \cite{Heckman-Opdam} to this setting after exponentiation of the coordinates.
\end{rmk}

Moving on, for each $N \in \mathbb{N}$, define the corners maps $\mathsf{Corners}_N^{N+1}$ from $(N+1)\times (N+1)$ matrices to $N \times N$ matrices by $\mathsf{Corners}_{N}^{N+1}([\mathbf{H}_{ij}]_{i,j=1}^{N+1})=[\mathbf{H}_{ij}]_{i,j=1}^N$. Moreover, for $N \in\mathbb{N}$ and $\mathbf{x}\in \mathbf{W}_{N;+}$ we let $\mathrm{diag}(\mathbf{x})$ be the $N \times N$  diagonal matrix with entries $\mathbf{x}$. Define the Markov kernels $\boldsymbol{\Lambda}_N^{N+1}(\mathbf{x},\mathrm{d}\mathbf{y})$ from $\mathbf{W}_{N+1;+}$ to $\mathbf{W}_{N;+}$ as the law of the eigenvalues of the $N \times N$ matrix $\mathsf{Corners}_N^{N+1}(\mathbf{U}^\dag \mathrm{diag}(\mathbf{x})\mathbf{U})$, where $\mathbf{U}$ is a random Haar distributed $N \times N$ unitary matrix. It is well-known that for $\mathbf{x}\in \mathbf{W}_{N+1;+}^\circ$, $\boldsymbol{\Lambda}_{N}^{N+1}(\mathbf{x},\mathrm{d}\mathbf{y})$ has the following explicit expression
\begin{equation}\label{ExplicitMarkovKernel}
\boldsymbol{\Lambda}_{N}^{N+1}(\mathbf{x},\mathrm{d}\mathbf{y})=\frac{N!\prod_{1\le i <j \le N}(y_i-y_j)}{\prod_{1\le i<j \le N+1}(x_i-x_j)}\mathbf{1}_{\mathbf{y}\prec \mathbf{x}} \mathrm{d}\mathbf{y},
\end{equation}
where $\mathbf{y}\prec \mathbf{x}$ denotes interlacing $x_1\ge y_1 \ge x_2 \ge y_2 \ge \cdots \ge y_N \ge x_{N+1}$.

With all this notation in place we can state the key consistency relation of the dynamics \eqref{GLinitial} as $N$ varies.

\begin{prop}\label{PropConsistency}
Let $\theta \in \mathbb{R}$. Then, we have the intertwining,
\begin{equation}\label{Intertwining}
\mathsf{P}_{t;N+1}\boldsymbol{\Lambda}_N^{N+1}=\boldsymbol{\Lambda}_N^{N+1}\mathsf{P}_{t;N}, \ \forall t \ge 0, \ \forall N \in \mathbb{N}.
\end{equation}
\end{prop}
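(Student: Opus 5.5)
Since both sides are Markov kernels from $\mathbf{W}_{N+1;+}$ to $\mathbf{W}_{N;+}$ and, by Proposition \ref{PropFiniteDimProperties}, the semigroups are $\mathrm{C}_0$-Feller, it suffices to verify \eqref{Intertwining} for $t>0$ and $\mathbf{x}\in\mathbf{W}_{N+1;+}^\circ$. Both sides are weakly continuous in $\mathbf{x}$, and $\boldsymbol{\Lambda}_N^{N+1}(\mathbf{x},\cdot)$ is weakly continuous in $\mathbf{x}$ as the law of corner eigenvalues, so the identity then extends to the boundary. For interior points I would work with the explicit densities \eqref{TransitionKernel} and \eqref{ExplicitMarkovKernel}, and the plan is the standard Karlin--McGregor/interlacing computation of Section 3 of \cite{AM24}, adapted to the kernel $\mathsf{q}^{(N)}_t$.

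\textbf{Step 1: integrate out the interlacing variables.} On the left, $\mathsf{P}_{t;N+1}\boldsymbol{\Lambda}_N^{N+1}(\mathbf{x},\mathrm{d}\mathbf{z})$ equals
\[
\mathrm{e}^{-\lambda_{N+1}t}\frac{N!\,\Delta_N(\mathbf{z})}{\Delta_{N+1}(\mathbf{x})}\int \det\big(\mathsf{q}^{(N+1)}_t(x_i,y_j)\big)\mathbf{1}_{\mathbf{z}\prec\mathbf{y}}\,\mathrm{d}\mathbf{y},
\]
where the Vandermonde $\Delta_{N+1}(\mathbf{y})$ cancels. Write $\mathbf{1}_{\mathbf{z}\prec\mathbf{y}}$ as the determinant $\det(\mathbf{1}_{y_j>z_i})$, with a row of ones appended using the convention $z_{N+1}=0$. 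Then apply Cauchy--Binet/Andréief to get a single $(N+1)\times(N+1)$ determinant with entries $\int_{z}^\infty \mathsf{q}^{(N+1)}_t(x_i,y)\,\mathrm{d}y$, together with a column $\int_0^\infty \mathsf{q}^{(N+1)}_t(x_i,y)\,\mathrm{d}y=1$.

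On the right, $\boldsymbol{\Lambda}_N^{N+1}\mathsf{P}_{t;N}(\mathbf{x},\mathrm{d}\mathbf{z})$ equals
\[
\mathrm{e}^{-\lambda_Nt}\frac{N!\,\Delta_N(\mathbf{z})}{\Delta_{N+1}(\mathbf{x})}\int \mathbf{1}_{\mathbf{y}\prec\mathbf{x}}\det\big(\mathsf{q}^{(N)}_t(y_i,z_j)\big)\,\mathrm{d}\mathbf{y},
\]
which becomes a determinant with entries $\int_{x_{i+1}}^{x_i}\mathsf{q}^{(N)}_t(y,z_j)\,\mathrm{d}y$.

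\textbf{Step 2: the one-dimensional key identity.} The two sides are matched by a duality relation between $\mathsf{q}^{(N+1)}$ and $\mathsf{q}^{(N)}$. With $\mathsf{G}^{(N)}_x=\frac{x^2}{2}\partial_x^2+(1+\frac{\theta}{2}-N)\partial_x$, one checks on generators that
\[
-\partial_z\!\int_z^\infty \mathsf{q}^{(N+1)}_t(x,y)\,\mathrm{d}y=\mathsf{q}^{(N+1)}_t(x,z)
\]
is related to $\partial_x\mathsf{q}^{(N)}_t(x,z)$ up to an exponential factor $\mathrm{e}^{ct}$. Concretely, I expect
\[
\partial_x\int_z^\infty \mathsf{q}^{(N+1)}_t(x,y)\,\mathrm{d}y=\mathrm{e}^{c_Nt}\,\mathsf{q}^{(N)}_t(x,z)\times(\text{factor}),
\]
which I would verify directly from the Gaussian form \eqref{GeomBMtransKernel} by completing the square in $\log$. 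The drift shift $N\to N+1$ is exactly what the factor $y$ produced by the derivative absorbs.

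\textbf{Step 3: row operations.} Using this identity, write $\int_{x_{i+1}}^{x_i}\mathsf{q}^{(N)}_t(y,z)\,\mathrm{d}y$ as a difference $F(x_i,z)-F(x_{i+1},z)$, where $F(x,z)=\int_z^\infty\mathsf{q}^{(N+1)}_t(x,y)\,\mathrm{d}y$ up to constants. Row operations then transform the $N\times N$ determinant into the $(N+1)\times(N+1)$ determinant from Step 1, since the column of ones exactly encodes the telescoping differences. Collecting the exponential constants should give $\mathrm{e}^{-\lambda_{N+1}t}=\mathrm{e}^{-\lambda_Nt}\mathrm{e}^{-Nc_Nt}$, which I would check against $\lambda_N=N(N-1)(3\theta+2-4N)/12$.

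An alternative route I would keep in reserve is to verify the generator intertwining $\mathsf{L}_{N+1}\boldsymbol{\Lambda}_N^{N+1}f=\boldsymbol{\Lambda}_N^{N+1}\mathsf{L}_Nf$ on the core of symmetric smooth functions, using the $h$-transform structure of $\mathsf{L}_N$ and integration by parts, and then to conclude by core/uniqueness arguments. The main obstacle is Step 2: getting the exact one-dimensional identity and the bookkeeping of constants right, including the boundary behaviour at $0$ and $\infty$ in the integration by parts.
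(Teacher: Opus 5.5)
Your proposal is correct and is essentially the paper's argument, which defers to Section 3 of \cite{AM24}. The ingredients match: Warren's determinantal form of $\mathbf{1}_{\mathbf{z}\prec\mathbf{y}}$ with Andr\'eief on the left, multilinearity over $y_i\in[x_{i+1},x_i]$ on the right, row operations against the column of ones, extension to the boundary via the Feller property, and a single one-dimensional identity. Completing the square settles your Step 2 with no extra factor: $\partial_x\int_z^\infty \mathsf{q}_t^{(N+1)}(x,y)\,\mathrm{d}y=\mathrm{e}^{(\theta/2-N)t}\,\mathsf{q}_t^{(N)}(x,z)$, with both sides positive (the minus sign in the paper's displayed version appears to be a typo), and $\lambda_{N+1}-\lambda_N=N(\theta/2-N)$ then closes your constant bookkeeping. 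One correction: your heuristic sentence before ``Concretely'', which equates $\mathsf{q}^{(N+1)}_t$ with a multiple of the sign-changing $\partial_x\mathsf{q}^{(N)}_t$, is false and should be dropped.
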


\begin{proof}
This is implicit in \cite{Interlacing} and could also be derived from intertwining results \cite{OConnellRSK} for Brownian motions with drifts after a change of variables. The most direct way to prove it is the following: use the explicit determinant expressions \eqref{TransitionKernel} and  \eqref{ExplicitMarkovKernel} for $\mathsf{P}_{t;N}, \mathsf{P}_{t;N+1}$ and $\boldsymbol{\Lambda}_N^{N+1}$ (valid for non-coinciding coordinates, then extend by continuity using the Feller property; for the Feller property of $\boldsymbol{\Lambda}_N^{N+1}$ see \cite{AM24}), a determinant representation for $\mathbf{1}_{\mathbf{y}\prec \mathbf{x}}$ \cite{Warren}, apply the Andreief identity for integrals of products of determinants and some elementary linear algebraic manipulations to both sides of \eqref{Intertwining} to see that they are equal. This argument is presented in detail in Section 3 of \cite{AM24} and works the same here. The crux and only model-dependent input is a certain one-dimensional identity, which in this case is, for $t,x,y>0$,
\begin{equation*}
\mathsf{q}_t^{(N)}(x,y)=-\mathrm{e}^{-(\theta/2-N)t}\int_y^\infty \partial_x\mathsf{q}_t^{(N+1)}(x,z)\mathrm{d}z,
\end{equation*}
and also observing $\lambda_{N+1}-\lambda_N=N(\theta/2-N)$. This identity can be checked directly from the explicit expression \eqref{GeomBMtransKernel} for $\mathsf{q}_t^{(N)}$. Finally, \eqref{Intertwining} could alternatively be proven by deriving a consistency under $\mathsf{Corners}_{N}^{N+1}$-projection at the level of matrix processes and this implication is also explained in Section 3 of \cite{AM24}.
\end{proof}

These are the inputs we need to derive the following.

\begin{prop}\label{PropConvergenceOnPathSpace}
Let $\theta\in \mathbb{R}$. Then, there exists a unique Feller diffusion process $(\mathbf{X}_{GL}^t)_{t\ge 0}$ on $\Upsilon_+$ such that whenever, 
\begin{equation*}
\left((N^{-1}\mathsf{x}_i^{(N)}(0))_{i\in \mathbb{N}},N^{-1}\sum_{i=1}^\infty \mathsf{x}_i^{(N)}(0)\right)  \overset{N \to \infty}{\longrightarrow} \boldsymbol{v}=((x_i)_{i\in \mathbb{N}},\gamma)\in \Upsilon_+,
\end{equation*}
we have, as $N \to \infty$,
\begin{equation*}
\left((N^{-1}\mathsf{x}_i^{(N)}(\bullet))_{i\in \mathbb{N}},N^{-1}\sum_{i=1}^\infty \mathsf{x}_i^{(N)}(\bullet)\right) \overset{\mathrm{d}}{\longrightarrow} \left((\mathsf{x}_i(\bullet))_{i\in \mathbb{N}},\boldsymbol{\gamma}(\bullet)\right)= \mathbf{X}_{GL}^\bullet
\end{equation*}
in $\mathrm{C}(\mathbb{R}_+;\Upsilon_+)$
with $\mathbf{X}_{GL}^0=\boldsymbol{v}$.
\end{prop}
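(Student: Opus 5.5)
The plan is to apply the general convergence framework of \cite{AM24} (the Borodin--Olshanski method of intertwiners adapted to random matrix dynamics) essentially as a black box. We verify its hypotheses for the tower $(\mathsf{P}_{t;N},\boldsymbol{\Lambda}_N^{N+1})_{N\ge1}$ and then transfer the abstract conclusions to the rescaled coordinates.

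\textbf{Step 1: the abstract boundary process.} The projective system $(\mathbf{W}_{N;+},\boldsymbol{\Lambda}_N^{N+1})$ is the one studied in \cite{AM24}. Its boundary is identified with $\Upsilon_+$ via the Olshanski--Vershik description of unitarily invariant positive definite Hermitian matrices. The link kernels $\boldsymbol{\Lambda}^\infty_N(\boldsymbol{v},\cdot)$ are Feller, and as $N\to\infty$ the measures $\boldsymbol{\Lambda}_N^{N+1}$-consistent points satisfy
\[
\left((N^{-1}x_i^{(N)})_{i},N^{-1}\sum_i x_i^{(N)}\right)\to \boldsymbol{v}
\]
exactly when $\boldsymbol{\Lambda}_{K}^{N}(\mathbf{x}^{(N)},\cdot)\to\boldsymbol{\Lambda}_K^\infty(\boldsymbol{v},\cdot)$ for every $K$. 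This is the same scaling used for the Rider--Valk\'o model, since only the kernels $\boldsymbol{\Lambda}$ enter.

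By Proposition \ref{PropFiniteDimProperties}, each $\mathsf{P}_{t;N}$ is $\mathrm{C}_0$-Feller, and by Proposition \ref{PropConsistency} they are intertwined. The Borodin--Olshanski theorem (as formulated in Section 2 of \cite{AM24}) therefore gives a unique Feller semigroup $\mathsf{P}_{t;\infty}$ on $\Upsilon_+$ with $\mathsf{P}_{t;\infty}\boldsymbol{\Lambda}_N^\infty=\boldsymbol{\Lambda}_N^\infty\mathsf{P}_{t;N}$.

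\textbf{Step 2: convergence of finite-dimensional distributions.} The framework yields $\mathsf{P}_{t;N}$-evolved embedded measures converging to $\mathsf{P}_{t;\infty}$ whenever the initial data converge in the above sense. With the Markov property, this gives convergence of finite-dimensional distributions of the rescaled vector (coordinates together with the normalised trace).

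\textbf{Step 3: tightness in $\mathrm{C}(\mathbb{R}_+;\Upsilon_+)$ and continuity of the limit.} Following \cite{AM24}, we use the core of symmetric smooth functions, and in particular power-sum type observables $\sum_i (N^{-1}\mathsf{x}_i^{(N)})^k$ for $k\ge1$. These separate points of $\Upsilon_+$ (the trace coordinate being $k=1$). Applying $\mathsf{L}_N$ from \eqref{GLinitial} to them gives explicit semimartingale decompositions, with drift and quadratic variation expressed through power sums of bounded degree and coefficients uniformly controlled in $N$. For example, the interaction term $\sum_{j\neq i}\frac{x_ix_j}{x_i-x_j}$ summed against $x_i^{k-1}$ symmetrises into a polynomial in the power sums.

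From there, Burkholder--Davis--Gundy and Kolmogorov (or Aldous) criteria give tightness of these observables in $\mathrm{C}$. Since they determine the $\Upsilon_+$-valued path, and $\Upsilon_+$ is locally compact, we obtain tightness and continuity of the limit paths. The limit is a Feller process with continuous paths, and strong Markov by Feller, so it is a diffusion.

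The main obstacle is this step: obtaining uniform-in-$N$ moment bounds for the rescaled power sums, because the drift contains the term $N$-dependent rescaling $\mathrm{e}^{-Nt/2}$. I would first check that after rescaling the $N$-dependence cancels exactly in the generator acting on $p_k/N^k$. This cancellation is what the choice \eqref{x-rsc} is designed for, mirroring the Rider--Valk\'o computation in \cite{AM24}. If it does, Gronwall on expectations closes the argument.
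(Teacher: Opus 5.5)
Your Steps 1--2 coincide with the paper's proof, which is a single sentence. The paper feeds Proposition \ref{PropFiniteDimProperties} (the Feller property, and the core of smooth compactly supported symmetric functions that is invariant under $\mathsf{L}_N$) and the intertwining of Proposition \ref{PropConsistency} into the general theory of Section 2 of \cite{AM24}. That theory already outputs convergence in $\mathrm{C}(\mathbb{R}_+;\Upsilon_+)$ and the diffusion property of the limit, and the invariant core is exactly the hypothesis it uses for the path-level statement.

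Your Step 3 instead proves tightness and path continuity by hand. This is a genuinely different, model-specific route, and it does go through. The obstacle you flag is absent: \eqref{GLinitial} contains no explicit $N$, because the factor $\mathrm{e}^{-Nt/2}$ is already absorbed in \eqref{x-rsc} and the equation is invariant under $\mathbf{x}\mapsto N^{-1}\mathbf{x}$. Symmetrising the interaction gives, for $p_k=\sum_i x_i^k$, an exactly $N$-free closed hierarchy
\begin{equation*}
\mathsf{L}_Np_k=\frac{\theta k}{2}p_k+\frac{k}{2}\sum_{m=1}^{k-1}p_mp_{k-m}, \qquad \mathrm{d}\langle p_k\rangle_t=k^2p_{2k}\,\mathrm{d}t,
\end{equation*}
since the It\^o term $\frac{k(k-1)}{2}p_k$ cancels against the diagonal part of the symmetrised interaction.

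Because this drift is quadratic, Gronwall on $\mathbf{E}[p_k]$ alone does not close. Use $p_mp_{k-m}\le p_1^k$ (non-negative entries) and control $\mathbf{E}[\sup_{s\le t}p_1(s)^k]$ through $\mathrm{d}p_1=\sum_i x_i\,\mathrm{d}\mathsf{w}_i+\frac{\theta}{2}p_1\,\mathrm{d}t$, with $\mathrm{d}\langle p_1\rangle_t=p_2\,\mathrm{d}t\le p_1^2\,\mathrm{d}t$. Then BDG and Kolmogorov give tightness; to avoid collision issues for degenerate initial data, apply It\^o at matrix level to $\operatorname{Tr}(\mathsf{R}_N^k)$. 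The transfer to $\Upsilon_+$ is sound because $\{\gamma\le C\}$ is compact and $(\gamma,(\sum_i x_i^k)_{k\ge 2})$ is a continuous injection on it.

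Your route buys explicit uniform-in-$N$ moment bounds and a self-contained continuity argument. The paper's route buys that nothing beyond the Feller property, core invariance and intertwining has to be checked. Since you still rely on \cite{AM24} for the boundary semigroup and finite-dimensional convergence, Step 3 does not reduce the dependence on that black box; it re-derives a part of it you could have cited.
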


\begin{proof}
This immediately follows by virtue of Proposition \ref{PropFiniteDimProperties} and Proposition \ref{PropConsistency} using the general convergence theory for consistent processes with respect to the kernels $(\boldsymbol{\Lambda}_N^{N+1})_{N\in \mathbb{N}}$ from Section 2 of \cite{AM24}.
\end{proof}

Finally, we have that for non-coinciding initial conditions the limiting paths remain non-intersecting for all times.

\begin{prop}\label{NonCollisionProp}
Let $\theta \in \mathbb{R}$. Let $\boldsymbol{v}=(\mathbf{x},\gamma)\in \Upsilon_+$ with $\mathbf{x}=(x_i)_{i\in \mathbb{N}}\in \mathbf{W}_{\infty;+}^\circ$. Then, the projection of $\mathbf{X}_{GL}^\bullet$ on $(\mathsf{x}_i(\bullet))_{i\in \mathbb{N}}$ satisfies almost surely for all $t\ge 0$, $(\mathsf{x}_i(t))_{i \in \mathbb{N}}\in \mathbf{W}_{\infty;+}^\circ$.
\end{prop}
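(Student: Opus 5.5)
The plan is to follow the Lyapunov-function argument of \cite{AM24}, mentioned in the introduction, adapted to the drift in \eqref{GLinitial}. Fix $k\in\mathbb{N}$ and work with the first $k+1$ coordinates. The strategy is to find, for the finite-$N$ dynamics \eqref{GLinitial}, a functional of $(N^{-1}\mathsf{x}_i^{(N)})_{i\le k+1}$ that blows up when two consecutive coordinates collide or the $(k+1)$-th hits zero. Its expected growth on compact time intervals must be controlled \emph{uniformly in $N$}. The uniform bound then passes to the limit via Proposition \ref{PropConvergenceOnPathSpace}, and a union over $k$ gives the claim.

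The concrete choice is
\[
\Phi^{(N)}_k(t)=-\sum_{1\le i<j\le k+1}\log\left(\mathsf{x}_i^{(N)}(t)-\mathsf{x}_j^{(N)}(t)\right)-\log \mathsf{x}_{k+1}^{(N)}(t).
\]
By Proposition \ref{PropFiniteDimProperties}, started in $\mathbf{W}_{N;+}^\circ$, this is a finite semimartingale for all times.

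The first step is to compute its drift by It\^o. For a pair $i<j\le k+1$, the interaction with indices $l\le k+1$ produces the usual cancelling or nonnegative-sign terms. Written after the $(x_i-x_j)^{-1}$ prefactor, these are
\[
\frac{x_ix_l}{x_i-x_l}-\frac{x_jx_l}{x_j-x_l}=\frac{x_l^2(x_j-x_i)}{(x_i-x_l)(x_j-x_l)},
\]
and the summation over triples is handled exactly as in Dyson-type computations. The It\^o correction $\frac{x_i^2+x_j^2}{2(x_i-x_j)^2}$ from the diffusion $x\,\mathrm{d}w$ is singular. The singular parts of the pairwise drift, $\frac{2x_ix_j}{(x_i-x_j)^2}\cdot(-1)$, come with the right sign. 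Combining them gives $-\frac{(x_i-x_j)^2}{2(x_i-x_j)^2}$-type bounded terms, since $x_i^2+x_j^2-2x_ix_j=(x_i-x_j)^2$. This is the key algebraic cancellation, and it is the reason the log-potential works here.

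The interaction with the far coordinates $l>k+1$ contributes
\[
\frac{1}{x_i-x_j}\left(\frac{x_ix_l}{x_i-x_l}-\frac{x_jx_l}{x_j-x_l}\right)=-\frac{x_l^2}{(x_i-x_l)(x_j-x_l)}\le 0
\]
after the minus sign of $\Phi$. This requires $x_l<x_j$, which ordering ensures. For the $-\log x_{k+1}$ term, the repulsion from below is positive and the drift is $-\theta/2+\frac12$, a bounded amount. Hence $\mathrm{d}\Phi^{(N)}_k\le C_k\,\mathrm{d}t+\mathrm{d}(\text{local martingale})$, with $C_k$ depending only on $k,\theta$ and crucially not on $N$. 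The terms in $\frac{x_l}{x_{k+1}-x_l}$ must be checked for sign, and they come with a favourable sign.

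The second step is to control the martingale part. Its bracket involves $\sum\frac{x_i^2}{(x_i-x_j)^2}$, so I would localise with $\tau_M=\inf\{t:\Phi\ge M\}$. Optional stopping gives $\mathbb{E}[\Phi_k^{(N)}(t\wedge\tau_M)]\le \Phi^{(N)}_k(0)+C_kt$. The function $\Phi$ is also bounded below on events where the top coordinates $N^{-1}\mathsf{x}_1^{(N)}$ stay bounded, and this has probability close to one uniformly in $N$ by tightness from Proposition \ref{PropConvergenceOnPathSpace}. Markov's inequality then yields
\[
\mathbb{P}\left(\inf_{s\le t}\min\left(\min_{i\le k}\left(N^{-1}\mathsf{x}_i^{(N)}-N^{-1}\mathsf{x}^{(N)}_{i+1}\right),\,N^{-1}\mathsf{x}_{k+1}^{(N)}\right)\le\epsilon\right)\to 0
\]
as $\epsilon\to0$, uniformly in $N$. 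The scaling by $N^{-1}$ only shifts $\Phi$ by a constant.

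The final step is to take initial conditions converging to $\boldsymbol v$ with distinct entries, so that $\Phi^{(N)}_k(0)$ is bounded. Passing to the limit in $\mathrm{C}(\mathbb{R}_+;\Upsilon_+)$ by the portmanteau theorem, using the closed set of paths where the gap is at most $\epsilon$, gives non-intersection and positivity of the first $k+1$ limiting coordinates on $[0,t]$ almost surely. The union over $k,t$ then finishes the proof.

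I expect the main obstacle to be ensuring that the constant $C_k$ really is $N$-uniform. In particular, the sum over the $N-k-1$ lower particles in the drift of $-\log x_{k+1}$ and of the gaps must carry the favourable sign, and no positive contribution of order $N$ should appear from the $\theta$-independent part of the rescaling.
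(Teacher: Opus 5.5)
Your overall plan (a Lyapunov-type bound, uniform in $N$, passed to the limit) is in the spirit of the paper, which imports the Lyapunov functions of Section 4 of \cite{AM24}. However, the functional $\Phi_k^{(N)}$ you chose does not work, and it fails exactly at the point you flagged as the main obstacle.

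Write $x_i$ for $N^{-1}\mathsf{x}_i^{(N)}$. For $i<j\le k+1<l$, your own computation gives the contribution $-\frac{x_l^2}{(x_i-x_l)(x_j-x_l)}<0$ to the drift of $\log(x_i-x_j)$. After the minus sign in $\Phi_k^{(N)}$, the far particles therefore contribute
\[
+\sum_{l>k+1}\frac{x_l^2}{(x_i-x_l)(x_j-x_l)}\;\ge\;0
\]
to its drift, not something non-positive. This reflects the fact that a particle below pushes $x_j$ up more than $x_i$, since $x\mapsto \frac{x x_l}{x-x_l}$ is decreasing on $(x_l,\infty)$. So lower particles compress the gaps.

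Take $i=k$, $j=k+1$, $l=k+2$. The term becomes $\frac{x_{k+2}^2}{(x_k-x_{k+2})(x_{k+1}-x_{k+2})}$, which blows up as $x_{k+2}$ approaches $x_{k+1}$. $\Phi_k^{(N)}$ does not see this gap. None of its favourable singular terms can absorb it; note that the pair term for $-\log(x_i-x_j)$ is $\frac12-\frac{x_ix_j}{(x_i-x_j)^2}$, not a bounded term. Hence no bound $\mathrm{d}\Phi_k^{(N)}\le C_k\,\mathrm{d}t+\mathrm{d}(\text{local martingale})$ holds, not even for fixed $N$.

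Enlarging $k$ only moves the problem one level down. Using all $N$ particles (the full log-Vandermonde) destroys uniformity: by $\frac{a^2}{(a-b)(a-c)}+\frac{b^2}{(b-a)(b-c)}+\frac{c^2}{(c-a)(c-b)}=1$, each triple contributes $+1$, giving a drift of order $N^3$. Separately, the drift of $-\log x_{k+1}$ is not just $\frac{1-\theta}{2}$. The particles above contribute $\sum_{l\le k}\frac{x_l}{x_l-x_{k+1}}$, which is singular, though this one can be absorbed.

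What is missing is a functional that couples each top particle to \emph{all} lower ones. Its size and drift must be controlled uniformly in $N$ through the tight quantity $N^{-1}\sum_j\mathsf{x}^{(N)}_j$, which is where the $\gamma$-coordinate of $\Upsilon_+$ enters. For instance, take the scale-invariant terms $-\log(1-x_j/x_i)$ with $i\le k$, $j>i$. Each has drift
\[
-\frac{x_j^2}{(x_i-x_j)^2}+\sum_{l\ne i,j}\frac{x_lx_j}{(x_i-x_l)(x_j-x_l)}.
\]
For fixed $i$, the contributions with $j\ne l$, $j,l>i$ combine in pairs into $-\frac{x_jx_l}{(x_i-x_j)(x_i-x_l)}\le 0$. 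For each $l<i$, after combining with the pair terms of first index $l$, the leftover is $\sum_{j>i}\frac{x_j}{x_l-x_j}\le \frac{\sum_j x_j}{x_l-x_i}$. Absorbing this needs additional control keeping $x_k$ away from $0$.

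Constructing such a functional and closing these estimates is the nontrivial content the paper takes from \cite{AM24}. As written, your argument does not supply it.
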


\begin{proof}
The proof is word-for-word the same as the one given in \cite{AM24} for the Rider-Valk\'{o} model \eqref{RVmodel}, which uses a Lyapunov-type argument, with the Lyapunov functions for \eqref{GLinitial} being exactly the same as the 
ones in Section 4 of \cite{AM24}.
\end{proof}

\begin{rmk}\label{RmkGibbsInitial}
Had we known a-priori that for any fixed time $t \ge 0$ almost surely $(\mathsf{x}_i(t))_{i \in \mathbb{N}}\in \mathbf{W}_{\infty;+}^\circ$ then the statement of Proposition \ref{NonCollisionProp} would also independently follow from the Gibbs property in Proposition \ref{PropGibbsPropertyGL}, see Remark \ref{RmkNonIntersectionGibbs}.
\end{rmk}

\subsection{Gibbs property}\label{SubsectionGibbs}

In this section we prove that the paths of $(\mathsf{x}_i(\bullet))_{i\in \mathbb{N}}$ from general initial condition satisfy a Gibbs resampling property, see the seminal work \cite{CorwinHammond1}, with respect to the law of exponential Brownian bridges. It is more convenient to work in logarithmic coordinates and we will equivalently prove that $(\log \mathsf{x}_i(\bullet))_{i\in \mathbb{N}}$ has the Brownian Gibbs property \cite{CorwinHammond1}.

The results and arguments in this section are not particularly surprising. However, the general framework of considering Gibbs resampling properties with respect to essentially arbitrary diffusion bridges in a unified way should be useful for future applications to other examples of random matrix dynamics.

We will work with line-ensembles with paths taking values in $\mathbb{R}$ for simplicity but the arguments below can be extended to any interval $(\mathfrak{a},\mathfrak{b})\subseteq \mathbb{R}$. So let us define,
\begin{align*}
\mathbf{W}_{\infty;+}=\left\{\mathbf{x}=(x_i)_{i\in \mathbb{N}}\in \mathbb{R}^{\mathbb{N}}:x_1\ge x_2\ge x_3 \ge \cdots \ge 0\right\},\\
\mathbf{W}_{\infty;+}^\circ=\left\{\mathbf{x}=(x_i)_{i\in \mathbb{N}}\in \mathbb{R}^{\mathbb{N}}:x_1> x_2> x_3 > \cdots >0\right\}.
\end{align*}
The basic object will be a time-homogeneous one-dimensional diffusion process in $\mathbb{R}$ with transition probability density with respect to Lebesgue measure given by $\mathfrak{p}_t(x,y)$ that we call the $\mathscr{A}$-diffusion. We assume that for any interval $[a,b]\subset \mathbb{R}_+$ and endpoints $x,y \in \mathbb{R}$, the diffusion bridge of the $\mathscr{A}$-diffusion from $x$ at time $a$ to $y$ at time $b$ exists, see \cite{MarkovianBridgers}. We call this the $\mathsf{A}$-bridge. The most natural example is simply the Brownian bridge. %Its transition probabilities $\mathfrak{pbr}_{s,t}^{a,b;x,y}(z_1,z_2)$ are given by, for $s\le t \in [a,b]$,
%\begin{equation*}
%\mathfrak{pbr}_{s,t}^{a,b;x,y}(z_1,z_2)=\frac{\mathfrak{p}_{t-s}(z_1,z_2)\mathfrak{p}_{b-t}(z_2,y)}{\mathfrak{p}_{b-s}(z_1,y)}, \ \ z_1, z_2 \in \mathbb{R}.
%\end{equation*}

\begin{defn}
Let $[a,b]\subset\mathbb{R}_+$, $k\in\mathbb{N}$, and $\mathbf{x},\mathbf{y}\in\mathbf{W}_{k}$.
Denote by
$\mathbf{P}^{a,b;\mathbf{x},\mathbf{y}}$ the law of $k$ independent $\mathsf{A}$-bridges
started at $\mathbf{x}$ at time $a$ and ending at $\mathbf{y}$ at time $b$ and write 
$\mathbf{E}^{a,b;\mathbf{x},\mathbf{y}}$ for the expectation with respect to
$\mathbf{P}^{a,b;\mathbf{x},\mathbf{y}}$.
\end{defn}

Observe that, dependence on $k\in \mathbb{N}$ of $\mathbf{P}^{a,b;\mathbf{x},\mathbf{y}}$ is implicit in the length of $\mathbf{x} \in \mathbf{W}_k$. To ease notation write $\mathrm{C}(\mathbb{R}_+)=\mathrm{C}(\mathbb{R}_+;\mathbb{R})$ (analogously for an interval $[a,b]\subset \mathbb{R}_+$) and $\mathrm{C}(\llbracket 1, N \rrbracket \times \mathbb{R}_+)=\prod_{i\in \llbracket 1, N \rrbracket } \mathrm{C}(\mathbb{R}_+)$ where $N\in \mathbb{N}\cup \{\infty\}$, with $\llbracket 1, \infty \rrbracket\equiv\mathbb{N}$. One could also replace the target space $\mathbb{R}$ with a general connected open interval.

\begin{defn}
A line ensemble is a $\mathrm{C}(\llbracket 1, N \rrbracket \times \mathbb{R}_+)$-valued random variable.
\end{defn}

We will need the following assumptions on the $\mathsf{A}$-bridge which we enforce throughout this section. We them to be true quite generally. It is well-known that Brownian bridge satisfies these, see \cite{CorwinHammond1}. The Bessel bridge does so as well (in $(0,\infty)$ instead of $\mathbb{R}$), see \cite{XuanWuBessel}.

\begin{ass}\label{ass-bridge}
Let $[a,b]\in\mathbb{R}_+$ and $x,y\in \mathbb{R}$ be arbitrary. We assume that the 
    $\mathbf{P}^{a,b;x,y}$-distributed random variable $\mathsf{A}$ satisfies:
\begin{itemize}
    \item [1.] 
    For any $f,g \in \mathrm{C}\left([a,b]\right)$ with $f(a)>x,f(b)>y$ and $g(a)<x, g(b)<y$, 
\begin{align}\label{noTouch}
    \mathbf{P}^{a,b;x,y}\left(\left\{\inf_{t\in[a,b]}\left(f(t)-\mathsf{A}(t)\right) = 0\right\}\cap
    \left\{\inf_{t\in[a,b]}\left(\mathsf{A}(t)- g(t)\right) = 0\right\}\right) = 0.
\end{align}
\item [2.]
For any open set \(\mathscr{O} \subset \mathrm{C}\left([a,b]\right)\) containing \(f\) with \(f(a) = x\) and \(f(b) = y\), we have
\[
\mathbf{P}^{a,b;x,y}\left(\mathsf{A} \in\mathscr{O}  \right) > 0.
\]

\end{itemize}

\end{ass}

By virtue of Assumption \ref{ass-bridge} we can define.

\begin{defn}\label{DefAvoidingBridges}
Let $[a,b]\subset \mathbb{R}_+$, $k\in \mathbb{N}$ and $\mathbf{x}=(x_i)_{i=1}^k$, $\mathbf{y}=(y_i)_{i=1}^k\in \mathbf{W}_{k}^\circ$. Let $u, l \in \mathrm{C}([a,b])$ be such that: $u(t)>l(t)$ for all $t\in [a,b]$ and $u(a)>x_1, u(b)>y_1, l(a)<x_k, l(b)<y_k$. We define $\mathbf{P}_{\mathrm{avoid}}^{a,b;\mathbf{x},\mathbf{y};u,l}$ to be the law of the process $(\mathsf{A}(t;i))_{i\in \llbracket 1, k \rrbracket,t\in [a,b]}$ distributed according to $\mathbf{P}^{a,b;\mathbf{x},\mathbf{y}}$ and conditioned on the non-zero probability event
\begin{equation*}
\left\{u(r)>\mathsf{A}(r;1)>\mathsf{A}(r;2)>\cdots >\mathsf{A}(r;k)>l(r), \ \ \forall r\in [a,b]\right\}.
\end{equation*}
Write $\mathbf{E}_{\mathrm{avoid}}^{a,b;\mathbf{x},\mathbf{y};u,l}$ for the corresponding expectation.
\end{defn}

We call $\mathbf{x}, \mathbf{y},u,l$ from Definition \ref{DefAvoidingBridges} the boundary data.

We can now define the Gibbs resampling property. It will be convenient to write $\mathsf{Law}(\mathsf{X})$ for the law of a random variable $\mathsf{X}$ and $\mathsf{Law}(\mathsf{X}|\mathscr{F})$ for its conditional law given a sigma algebra $\mathscr{F}$.

\begin{defn}\label{GibbsDefinition}
Let $N \in \mathbb{N}\cup \{\infty\}$. We say that a line ensemble $\mathcal{L}\in \mathrm{C}(\llbracket 1, N \rrbracket)$ satisfies the Gibbs resampling property with respect to the law of the $\mathsf{A}$-bridge if for all $\llbracket \ell,\ell+k-1 \rrbracket \subset \llbracket 1, N \rrbracket$ with $\ell, k \ge 1$ and any $[a,b]\in \mathbb{R}_+$ we have
\begin{equation}\label{GibbsPropertyFormal}
\mathsf{Law}\left(\mathcal{L}|_{\llbracket \ell,\ell+k-1 \rrbracket \times [a,b]}|\mathscr{F}_{\mathrm{ext}}\left(\llbracket \ell, \ell+k-1 \rrbracket \times (a,b) \right)\right)=\mathbf{P}_{\mathrm{avoid}}^{a,b;\mathbf{x},\mathbf{y};u,l},\ \ \textnormal{ almost surely,}
\end{equation}
where the external sigma-algebra $\mathscr{F}_{\mathrm{ext}}\left(\llbracket \ell, \ell+k-1 \rrbracket \times (a,b) \right)$ is defined as
\begin{equation}\label{ExternalSigmaAlgebra}
\mathscr{F}_{\mathrm{ext}}\left(\llbracket \ell, \ell+k-1 \rrbracket \times (a,b) \right)= \sigma\left(\mathcal{L}_j(t);(j,t)\notin \llbracket \ell, \ell+k-1 \rrbracket\times (a,b)\right), 
\end{equation}
and the (random) boundary data $\mathbf{x},\mathbf{y},u,l$ given by,
\begin{align*}
\mathbf{x}=\left(\mathcal{L}_\ell(a),\mathcal{L}_{\ell+1}(a),\dots,\mathcal{L}_{k+\ell-1}(a)\right), \  \mathbf{y}=\left(\mathcal{L}_{\ell}(b),\mathcal{L}_{\ell+1}(b),\dots,\mathcal{L}_{k+\ell-1}(b)\right), \ u=\mathcal{L}_{\ell-1},  \ l=\mathcal{L}_{\ell+k},
\end{align*}
with the convention $\mathcal{L}_{0}\equiv \infty$, $\mathcal{L}_{N+1}\equiv -\infty$.
\end{defn}

\begin{rmk}\label{RmkNonIntersectionGibbs}
Observe that, in our definition of the Gibbs property it is implicit that for all fixed $t\ge 0$, almost surely $(\mathcal{L}_i(t))_{i\in \llbracket 1,N \rrbracket }\in \mathbf{W}_N^\circ$ in order for the right hand side of \eqref{GibbsPropertyFormal} to be well-defined. In fact, a stronger statement holds: almost surely for all $t\ge 0$ we have $(\mathcal{L}_i(t))_{i\in \llbracket 1,N \rrbracket }\in \mathbf{W}_N^\circ$ as we spell out now. Write $\mathbf{P}$ and $\mathbf{E}$ for $\mathsf{Law}(\mathcal{L})$ and expectation with respect to it. It suffices to show that for all $k \in \mathbb{N}$ and times $T\ge 0$ we have,
\begin{equation*}
\mathbf{P}\left(\inf_{t\in [0,T]}\left|\mathcal{L}_k(t)-\mathcal{L}_{k+1}(t)\right|>0\right)=1.
\end{equation*}
From \eqref{GibbsPropertyFormal} we have for any bounded measurable function $\mathsf{F}:\mathrm{C}(\{k,k+1\}\times[0,T])\to \mathbb{R}_+$,
\begin{equation*}
\mathbf{E}\left[\mathsf{F}\left(\mathcal{L}|_{\{k,k+1\}\times[0,T]}\right)\big|\mathscr{F}_{\mathrm{ext}}\left(\{k,k+1\} \times (0,T) \right)\right]=\mathbf{E}_{\mathrm{avoid}}^{0,T;\mathbf{x},\mathbf{y};u,l}\left[\mathsf{F}(\mathsf{Z})\right], \ \textnormal{ almost surely},
\end{equation*}
where $\mathsf{Z}$ is distributed according to $\mathbf{P}_{\mathrm{avoid}}^{0,T;\mathbf{x},\mathbf{y};u,l}$, and $\mathbf{x},\mathbf{y},u,l$ as in Definition \ref{GibbsDefinition}. Taking expectations we get,
\begin{equation*}
\mathbf{E}\left[\mathsf{F}\left(\mathcal{L}|_{\{k,k+1\}\times[0,T]}\right)\right]=\mathbf{E}\left[\mathbf{E}_{\mathrm{avoid}}^{0,T;\mathbf{x},\mathbf{y};u,l}\left[\mathsf{F}(\mathsf{Z})\right]\right].
\end{equation*}
 Choosing $\mathsf{F}(f,g)=\mathbf{1}_{f(t)>g(t), \forall t \in [0,T]}$ we get the desired statement since the inner expectation is 1.
\end{rmk}

We next point out that finite-dimensional line ensembles arising from a Doob $h$-transform of a Karlin-McGregor semigroup \cite{KarlinMcGregor} have a Gibbs property. %This is done directly at the continuum level and avoids discrete approximations as in previous works.
Namely, assume there exists an $N$-dimensional Markov process $(\mathsf{v}^{(N)}(t))_{t\ge 0}$ in $\mathbf{W}_{N}^\circ$ with continuous sample paths with transition probabilities given by the following Doob $h$-transform of $\mathscr{A}$-diffusions,
\begin{align}
    \label{Doob-transitioDensity}
	\mathfrak{P}_N^{(\mathrm{h})}(t)(\mathbf{x},\mathbf{y})= \mathrm{e}^{-\mu_Nt}\frac{\mathrm{h}(\mathbf{y})}{\mathrm{h}(\mathbf{x})} 
    \det\left(\mathfrak{p}_t(x_i,y_j)\right)_{i,j=1}^{N}, \  \ \mathbf{x} \in \mathbf{W}_N^{\circ}, \mathbf{y}\in\mathbf{W}_N^\circ,  \ t>0,
\end{align}
for some function $\mathrm{h}$ strictly positive in $\mathbf{W}_N^\circ$ and $\mu_N\in \mathbb{R}$. We note that, the process $(\mathsf{v}^{(N)}(t))_{t\ge 0}$ gives rise to a $\mathrm{C}(\llbracket 1, N \rrbracket \times \mathbb{R}_+)$-valued line ensemble.

\begin{prop}\label{Gibbs-finiteN}
    Let $N\in\mathbb{N}$. Then, the line ensemble associated to the process $(\mathsf{v}^{(N)}(t))_{t\ge 0}$ satisfies the Gibbs resampling property with respect to the law of the $\mathsf{A}$-bridge. 
\end{prop}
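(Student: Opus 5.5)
The plan is to show that, for fixed $\llbracket \ell,\ell+k-1\rrbracket$ and $[a,b]$, the conditional law of the $k$ curves on $[a,b]$ given everything outside has a density with respect to $\mathbf{P}^{a,b;\mathbf{x},\mathbf{y}}$ proportional to the indicator of the avoidance event. Then the ratio-of-$\mathrm{h}$ factors and the $\mathrm{e}^{-\mu_N t}$ factors cancel.

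First, I will compute the finite-dimensional distributions of the line ensemble. For times $0\le t_0<t_1<\dots<t_m$, the Markov property and \eqref{Doob-transitioDensity} give a density that is a product of the $\mathrm{h}$-ratios, the exponentials and the determinants $\det(\mathfrak{p}_{t_{r}-t_{r-1}}(x^{(r-1)}_i,x^{(r)}_j))$. The $\mathrm{h}$-ratios telescope to $\mathrm{h}(\mathbf{x}^{(m)})/\mathrm{h}(\mathbf{x}^{(0)})$. Next I will invoke the Karlin--McGregor formula: for points in $\mathbf{W}_N^\circ$, the determinant $\det(\mathfrak{p}_t(x_i,y_j))$ equals $\prod_i \mathfrak{p}_t(x_i,y_i)$ times the probability that $N$ independent $\mathscr{A}$-bridges from $\mathbf{x}$ to $\mathbf{y}$ on $[0,t]$ do not intersect. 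This needs continuity of paths and the strong Markov property of the one-dimensional diffusion. Consequently, on the time window between two fixed times $s<u$, the conditional law of the whole path $\mathsf{v}^{(N)}|_{[s,u]}$, given $\mathsf{v}^{(N)}(s)=\mathbf{x}$ and $\mathsf{v}^{(N)}(u)=\mathbf{y}$ together with everything outside $[s,u]$, is $N$ independent $\mathsf{A}$-bridges conditioned not to intersect. This is exactly $\mathbf{P}_{\mathrm{avoid}}^{s,u;\mathbf{x},\mathbf{y};+\infty,-\infty}$. I will justify the identification by matching finite-dimensional marginals on a finer time grid and using the Markov property on both sides.

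Second, I will pass from the whole ensemble to a sub-block $\llbracket \ell,\ell+k-1\rrbracket$. Under $N$ independent bridges conditioned on total non-intersection, conditioning further on the curves indexed outside the block leaves the block curves distributed as independent bridges reweighted by the indicator that they avoid each other and stay strictly between $u=\mathcal{L}_{\ell-1}$ and $l=\mathcal{L}_{\ell+k}$. This is the elementary fact that conditioning a product measure restricted to an event on some of its coordinates gives the product measure on the remaining coordinates restricted to the section of the event. The conditioning event has positive probability by Assumption \ref{ass-bridge}(2), since the boundary data is almost surely strictly ordered, so the conditioned law is well defined and equals $\mathbf{P}_{\mathrm{avoid}}^{a,b;\mathbf{x},\mathbf{y};u,l}$.

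Third, I will handle the full external sigma-algebra \eqref{ExternalSigmaAlgebra}, which includes the block curves at times outside $(a,b)$. By the Markov property of $\mathsf{v}^{(N)}$ at times $a$ and $b$ (first step with $s=a$, $u=b$), the path on $[a,b]$ depends on the path outside $[a,b]$ only through $\mathsf{v}^{(N)}(a)$ and $\mathsf{v}^{(N)}(b)$. Combining this with the second step gives \eqref{GibbsPropertyFormal}. I will verify the identity by testing against products of bounded functionals of the inside and outside parts; a monotone class argument extends it to the full sigma-algebras.

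The main obstacle is the rigorous path-level Karlin--McGregor identification in the first step. Finite-dimensional marginals only constrain non-intersection at grid times, so I plan to take dyadic refinements and pass to the limit. That requires path continuity, together with Assumption \ref{ass-bridge}(1) to rule out mere touching, so that non-intersection at all dyadic times coincides almost surely with non-intersection at all times.
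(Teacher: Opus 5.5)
Your proposal is correct and follows the paper's route. You compute the conditional finite-dimensional distributions given $\mathsf{v}^{(N)}(a),\mathsf{v}^{(N)}(b)$, cancel the $\mathrm{h}$ and exponential factors, and recognise the ratio of Karlin--McGregor determinants as the law of non-intersecting $\mathsf{A}$-bridges; you then spell out (via the product-measure section and the Markov property at $a,b$) the block Gibbs step that the paper calls obvious and omits. The dyadic-refinement step you flag as the main obstacle is not needed, and Assumption \ref{ass-bridge}(1) plays no role here: the Karlin--McGregor formula already refers to non-collision on the whole interval, so applying it on each subinterval of an arbitrary partition gives the finite-dimensional distributions of the conditioned bridges exactly, and these determine the law on $\mathrm{C}([a,b])^N$ since both processes have continuous paths.
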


\begin{proof}
    Let us take an arbitrary
    interval $[a,b]\subset\mathbb{R}_+$ partitioned as follows
\begin{equation*}
   a= t_0< t_1< \cdots< t_k= b.
\end{equation*}
Denote by $\mathbf{x}^{(m)} = (x_1^{(m)}, \dots, x_N^{(m)})\in\mathbf{W}_{N}$ the positions
at time $t_m$, for $m=0, 1, \dots,k$,
with
$\mathbf{x}^{(0)} = \mathbf{x}=
\left(x_1, \dots, x_N\right)$, $\mathbf{x}^{(m)} = \mathbf{y}
=\left(y_1, \dots, y_N\right)$. Applying the Markov property, the corresponding finite-dimensional distribution of 
$\mathsf{v}^{(N)}(\bullet)$ conditional on $\mathsf{v}^{(N)}(a)=\mathbf{x}$ and $\mathsf{v}^{(N)}(b)=\mathbf{y}$
is given by the following joint density function with respect to product Lebesgue measure,
\begin{align*}
\frac{
\prod_{m=0}^{k-1} \mathfrak{P}^{(h)}_{t_{m+1} - t_m}\left(\mathbf{x}^{(m)}, \mathbf{x}^{(m+1)}\right)
}{
\mathfrak{P}^{(h)}_{b-a}\left(\mathbf{x}, \mathbf{y}\right)
}, 
\end{align*}
which, using \eqref{Doob-transitioDensity} and after  cancellation of common multiplicative terms, simplifies to
\begin{align}
\frac{
\prod_{m=0}^{k-1} \det\left(\mathfrak{p}_{t_{m+1}-t_m}\left(x_i^{(m)},x_j^{(m+1)}\right)\right)_{i,j=1}^{N}}{
\det\left(\mathfrak{p}_{b-a}(x_i,y_j)\right)_{i,j=1}^{N}
}.
\end{align}
Note that, this is exactly the density function corresponding to the finite-dimensional distribution of $N$ non-colliding
$\mathsf{A}$-bridges
on the interval $[a,b]$ with endpoints $\mathbf{x}$ and $ \mathbf{y}$, as obtained by the Karlin–McGregor formula.
Hence, by virtue of continuity of the paths, the conditional law of $\mathsf{v}^{(N)}(\bullet)$ on any interval $[a,b]$, given $\mathsf{v}^{(N)}(a)$ and $\mathsf{v}^{(N)}(b)$, coincides with the law of $N$ non-intersecting $\mathsf{A}$-bridges on $[a,b]$, with the same endpoints. It is then obvious (but tedious to write down formally and we omit this) that non-intersecting $\mathsf{A}$-bridges satisfy the Gibbs resampling property with respect to the law of the $\mathsf{A}$-bridge which completes the proof.
\end{proof}

We require one final lemma.

\begin{lem}\label{Lem-CondDistConv}
Assume that for all $[a,b] \subset \mathbb{R}_+$, and $x_N,y_N \in \mathbb{R}$ such that $x_N \to x$, $y_N \to y$ as $N \to \infty$, we have the weak convergence of probability measures,
\begin{equation}\label{SingleBridgeConv}
\mathbf{P}_N^{a,b;x_N,y_N} \longrightarrow \mathbf{P}^{a,b;x,y}, \ \textnormal{ as } N \longrightarrow \infty,
\end{equation}
where $\mathbf{P}_N^{a,b;x_N,y_N}$ corresponds to the law of a $\mathsf{A}_N$-bridge and $\mathbf{P}^{a,b;x,y}$ to the law of a $\mathsf{A}$-bridge.  Let $k\in \mathbb{N}$ and $\mathbf{x}_N,\mathbf{y}_N, \mathbf{x},\mathbf{y}\in \mathbf{W}_{k}^\circ$ with $\mathbf{x}_N \to \mathbf{x}, \mathbf{y}_N \to \mathbf{y}$ as $N \to \infty$. Moreover, suppose that $u_N,u \in \mathrm{C}([a,b];\mathbb{R}\cup \{ \infty \})$ and $l_N,l \in \mathrm{C}([a,b];\mathbb{R}\cup \{ -\infty \})$ are such that 
\begin{equation*}
u(t)>l(t), \ \forall t \in [a,b], \ u(a)>x_1, \ u(b)>y_1, \ l(a)<x_k, \ l(b)<y_k,
\end{equation*}
and also, as $N \to \infty$, uniformly in $[a,b]$,
\begin{equation}\label{BoundaryConvergence}
u_N \longrightarrow u, \ \ l_N \longrightarrow l.
\end{equation}
Then, for $N \in \mathbb{N}$ large enough, the conditional law $\mathbf{P}_{\mathrm{avoid};N}^{a,b;\mathbf{x}_N,\mathbf{y}_N;u_N,l_N}$ is well-defined and we have the following weak convergence of probability measures, as $N \longrightarrow \infty$,
\begin{equation}\label{ConditionedBridgeConv}
\mathbf{P}_{\mathrm{avoid};N}^{a,b;\mathbf{x}_N,\mathbf{y}_N;u_N,l_N}\longrightarrow \mathbf{P}_{\mathrm{avoid}}^{a,b;\mathbf{x},\mathbf{y};u,l}.
\end{equation}
\end{lem}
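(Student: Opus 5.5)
The plan is the standard conditioning-on-a-positive-probability-event argument, as in Corwin--Hammond. Write $\mathbf{P}_N=\mathbf{P}_N^{a,b;\mathbf{x}_N,\mathbf{y}_N}$ and $\mathbf{P}=\mathbf{P}^{a,b;\mathbf{x},\mathbf{y}}$ for the laws of $k$ independent bridges. Write $\mathsf{E}_N\subset \mathrm{C}(\llbracket 1,k\rrbracket\times[a,b])$ for the avoidance event with boundaries $u_N,l_N$, and $\mathsf{E}$ for the one with $u,l$.

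\textbf{Step 1: convergence of the unconditioned laws.} Since the $k$ bridges are independent, \eqref{SingleBridgeConv} applied coordinatewise gives $\mathbf{P}_N\to\mathbf{P}$ weakly on $\mathrm{C}([a,b])^k$. Convergence of product measures follows from convergence of the marginals, for instance via tightness plus convergence on product test functions.

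\textbf{Step 2: positivity of $\mathbf{P}(\mathsf{E})$ and the lower bound.} Pick a continuous $f=(f_1,\dots,f_k)$ with $f_i(a)=x_i$, $f_i(b)=y_i$ and $u>f_1>\cdots>f_k>l$ strictly on $[a,b]$. This is possible because $\mathbf{x},\mathbf{y}\in\mathbf{W}_k^\circ$, $u>l$, and the endpoint inequalities are strict. By compactness there is some $\eta>0$ such that the uniform $\eta$-ball $\mathscr{B}$ around $f$ lies in $\mathsf{E}$; it also lies in $\mathsf{E}_N$ for $N$ large, by \eqref{BoundaryConvergence}. By item 2 of Assumption \ref{ass-bridge} and independence, $\mathbf{P}(\mathscr{B})>0$. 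The portmanteau theorem then gives $\liminf_N \mathbf{P}_N(\mathsf{E}_N)\ge \liminf_N\mathbf{P}_N(\mathscr{B})\ge \mathbf{P}(\mathscr{B})>0$. In particular $\mathbf{P}_{\mathrm{avoid};N}$ is well-defined for $N$ large.

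\textbf{Step 3: convergence of $\mathbf{P}_N(F\mathbf{1}_{\mathsf{E}_N})$.} Fix a bounded continuous $F$. It suffices to show $\mathbf{E}_N[F\mathbf{1}_{\mathsf{E}_N}]\to\mathbf{E}[F\mathbf{1}_{\mathsf{E}}]$; taking $F\equiv1$ gives convergence of the normalisations, whose limit is positive by Step 2, and the ratio converges. By Skorokhod representation, realise $\mathsf{A}_N\to\mathsf{A}$ almost surely, uniformly. On $\{\mathsf{A}\in\mathsf{E}\}$, strict separation on the compact interval together with uniform convergence of $\mathsf{A}_N,u_N,l_N$ gives $\mathbf{1}_{\mathsf{E}_N}(\mathsf{A}_N)=1$ eventually. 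On $\{\mathsf{A}\notin \overline{\mathsf{E}}\}$, where $\overline{\mathsf{E}}$ is the set of non-strict inequalities, we get $\mathbf{1}_{\mathsf{E}_N}(\mathsf{A}_N)=0$ eventually. Dominated convergence then concludes, provided $\mathbf{P}(\overline{\mathsf{E}}\setminus\mathsf{E})=0$.

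\textbf{Main obstacle: showing $\mathbf{P}(\overline{\mathsf{E}}\setminus\mathsf{E})=0$.} One must rule out that a limiting path touches $u$, $l$, or a neighbouring path without crossing it. I would use item 1 of Assumption \ref{ass-bridge}, conditioning on all other coordinates. Fix $i$, and condition on $\mathsf{A}(\cdot;j)$ for $j\neq i$, which is independent of $\mathsf{A}(\cdot;i)$. On $\overline{\mathsf{E}}$ (for $k\ge 2$; the case $k=1$ uses $u,l$ directly), set the upper neighbour $f=\mathsf{A}(\cdot;i-1)$, or $u$ if $i=1$, and the lower neighbour $g=\mathsf{A}(\cdot;i+1)$, or $l$ if $i=k$. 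These satisfy the strict endpoint conditions, because the endpoints of $\mathbf{x},\mathbf{y}$ are strictly ordered. Item 1 as stated only excludes \emph{simultaneous} touching of $f$ from below and $g$ from above. To exclude touching just one of them, I would apply item 1 with the other boundary replaced by a far-away function; for example, to exclude touching $f$, take $g$ equal to a constant below $\min \mathsf{A}(\cdot;i)$ minus one, restricted to a rational family and taking a countable union. This is where care is needed: interpreting \eqref{noTouch} as a statement about either event, and handling the $\pm\infty$ boundaries. Summing over $i$, each touching event has conditional probability zero, so $\mathbf{P}(\overline{\mathsf{E}}\setminus\mathsf{E})=0$. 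This yields \eqref{ConditionedBridgeConv}.
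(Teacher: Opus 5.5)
Your overall route is the paper's: convergence of the free product bridges, a Skorokhod coupling, positivity of the limiting avoidance event, a null boundary for that event, and bounded convergence for numerator and denominator. Steps 1--3 are fine, and more explicit than the paper (for example, your tube argument giving $\liminf_N\mathbf{P}_N(\mathsf{E}_N)>0$).

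The gap is in the device you propose for the main obstacle. Take $g$ to be a constant below $\min_{[a,b]}\mathsf{A}(\cdot;i)-1$. Then $\inf_t(\mathsf{A}(t;i)-g(t))\ge 1$, so the event $\{\inf_t(\mathsf{A}(t;i)-g(t))=0\}$ is empty. The intersection in \eqref{noTouch} is therefore empty, and item 1 only yields $\mathbf{P}(\emptyset)=0$. It tells you nothing about $\{\inf_t(f(t)-\mathsf{A}(t;i))=0\}$, and a countable family of such $g$ does not change this. For the intersection form to say anything about touching $f$, the bridge would have to touch $g$ from above with positive probability. For a nondegenerate bridge, such as the Brownian bridge, that is itself a null event for each fixed $g$. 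So no auxiliary $g$ independent of $\mathsf{A}(\cdot;i)$ rescues the literal $\cap$ reading. Meanwhile $\overline{\mathsf{E}}\setminus\mathsf{E}$ genuinely contains one-sided touchings, for example $\mathsf{A}(\cdot;1)$ touching $\mathsf{A}(\cdot;2)$ while staying strictly below $u$. Your conditioning argument must exclude exactly these.

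The missing input is the one-sided statement: item 1 must be read with $\cup$ in place of $\cap$. That is, $\mathbf{P}^{a,b;x,y}(\inf_t(f-\mathsf{A})=0)=0$ and $\mathbf{P}^{a,b;x,y}(\inf_t(\mathsf{A}-g)=0)=0$ separately. This is evidently the intended meaning, and it holds for the Brownian bridge as in Corwin--Hammond. It is also exactly what the paper tacitly uses when it asserts in one line that the topological boundary of $\mathsf{Ord}$ has probability zero by Assumption \ref{ass-bridge}. With that reading, your conditioning/Fubini argument over each $i$ closes the proof. It uses the independence of $\mathsf{A}(\cdot;i)$ from its neighbours and the strict endpoint orderings, with a large constant as the dummy partner when $u\equiv+\infty$ or $l\equiv-\infty$. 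The far-away-function step should simply be dropped.
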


\begin{proof}
Observe that, by virtue of Assumption \ref{ass-bridge}, the following event
\begin{equation*}
\mathsf{Ord}= \left\{u(t)>\mathsf{A}(t;1)>\mathsf{A}(t;2)>\cdots >\mathsf{A}(t;k)>l(t), \ \forall t \in [a,b]\right\},
\end{equation*}
where $(\mathsf{A}(t;i))_{i\in \llbracket 1, k\rrbracket,t \in [a,b]}$ is distributed according to $\mathbf{P}^{a,b;\mathbf{x},\mathbf{y}}$ ($k$ independent $\mathsf{A}$-bridges) has positive probability and for all $N \in \mathbb{N}$ also consider the event $\mathsf{Ord}_N$ given by,
\begin{equation}
\mathsf{Ord}_N=\left\{u_N(t)>\mathsf{A}_N(t;1)>\mathsf{A}_N(t;2)>\cdots >\mathsf{A}_N(t;k)>l_N(t), \ \forall t \in [a,b]\right\},
\end{equation}
where $(\mathsf{A}_N(t;i))_{i\in \llbracket 1, k\rrbracket,t \in [a,b]}$ is distributed according to $\mathbf{P}_N^{a,b;\mathbf{x},\mathbf{y}}$
($k$ independent $\mathsf{A}_N$-bridges). On the other hand, from \eqref{SingleBridgeConv} and by independence of the coordinates we have
\begin{equation}\label{MultipleBridgeConv}
 (\mathsf{A}_N(t;i))_{i\in \llbracket 1, k\rrbracket,t \in [a,b]} \overset{\mathrm{d}}\longrightarrow  (\mathsf{A}(t;i))_{i\in \llbracket 1, k\rrbracket,t \in [a,b]}.
\end{equation}
By virtue of Skorokhod's representation theorem we can assume this convergence takes place almost surely in a coupling given by a probability measure $\mathbf{Q}$. Hence, also recalling \eqref{BoundaryConvergence}, we obtain that for $N$ large enough $\mathbf{Q}(\mathsf{Ord}_N)>0$ and  $\mathbf{P}_{\mathrm{avoid};N}^{a,b;\mathbf{x}_N,\mathbf{y}_N;u_N,l_N}$ is well-defined. To finally prove \eqref{ConditionedBridgeConv} we need to show that for any continuous bounded functional $\mathsf{F}:\mathrm{C}(\llbracket 1, k \rrbracket \times [a,b])\to \mathbb{R}_+$,
\begin{equation*}
\frac{\mathbf{Q}\left[\mathsf{F}\left((\mathsf{A}_N(t;i))_{i\in \llbracket 1, k\rrbracket,t \in [a,b]} \right)\mathbf{1}_{\mathsf{Ord}_N}\right]}{\mathbf{Q}\left(\mathsf{Ord}_N\right)} \longrightarrow \frac{\mathbf{Q}\left[\mathsf{F}\left((\mathsf{A}(t;i))_{i\in \llbracket 1, k\rrbracket,t \in [a,b]} \right)\mathbf{1}_{\mathsf{Ord}}\right]}{\mathbf{Q}\left(\mathsf{Ord}\right)}, \ \textnormal{ as } N \to \infty.
\end{equation*}
This convergence then follows from \eqref{MultipleBridgeConv} by virtue of the bounded convergence theorem and the fact that the topological boundary of the set $\mathsf{Ord}$ has probability zero by virtue of Assumption \ref{ass-bridge}.
\end{proof}

We have the following natural result.

\begin{prop}\label{PropositionLimitingGibbs}
Let $(\mathcal{L}^{(N)})_{N\in\mathbb{N}} $ be a sequence of line ensembles such that for each $N\in\mathbb{N}$, with $\mathcal{L}^{(N)}\in \mathrm{C}(\llbracket 1, N \rrbracket \times \mathbb{R}_+)$  satisfies the Gibbs resampling property with respect to the law of a $\mathsf{A}_N$-bridge. Let $\mathcal{L}$ be a $\mathrm{C}(\mathbb{N}\times\mathbb{R}_+)$-valued line ensemble. Moreover, assume that as $N \longrightarrow \infty$, for all $M \in \mathbb{N}$ (when $N\ge M$),
\begin{align*} \left(\mathcal{L}^{(N)}_{i}(t)\right)_{i\in \llbracket 1, M \rrbracket, t\in \mathbb{R}_+}\overset{\mathrm{d}}{\longrightarrow}\left(\mathcal{L}_{i}(t)\right)_{i\in \llbracket 1, M \rrbracket, t\in \mathbb{R}_+} \ \textnormal{ in }\; \mathrm{C}\left(\llbracket 1, M \rrbracket \times\mathbb{R}_+\right), 
\end{align*}
and that for any fixed $t\ge 0$, almost surely $(\mathcal{L}_i(t))_{i\in \mathbb{N}}\in \mathbf{W}_\infty^\circ$. Finally, we assume that for any $[a,b]\subset \mathbb{R}_+$ and any $x_N,y_N \in \mathbb{R}$, whenever $x_N \to x$ and $y_N \to y$, we have the weak convergence of probability measures
\begin{equation*}
\mathbf{P}_N^{a,b;x_N,y_N} \longrightarrow\mathbf{P}^{a,b;x,y}, \ \ \textnormal{ as } N \longrightarrow\infty,
\end{equation*}
where $\mathbf{P}_N^{a,b;x_N,y_N}$ and $\mathbf{P}^{a,b;x,y}$ correspond to the law of a $\mathsf{A}_N$-bridge and $\mathsf{A}$-bridge respectively. Then, $\mathcal{L}$ satisfies the Gibbs resampling property with respect to the law of an $\mathsf{A}$-bridge.
In particular, the paths of $\mathcal{L}$ are almost surely non-intersecting.
\end{prop}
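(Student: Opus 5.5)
The plan is the standard one for passing the Gibbs property to the limit: rewrite it as an identity of expectations and pass to the limit using Lemma \ref{Lem-CondDistConv}. Fix $\ell,k$, an interval $[a,b]$, and $M\ge \ell+k$. The Gibbs property \eqref{GibbsPropertyFormal} for $\mathcal{L}$ on $\llbracket \ell,\ell+k-1\rrbracket\times[a,b]$ is equivalent to the identity
\begin{equation*}
\mathbf{E}\left[\mathsf{F}\left(\mathcal{L}|_{\llbracket \ell,\ell+k-1\rrbracket\times[a,b]}\right)\mathsf{G}\left(\mathcal{L}^{\mathrm{ext}}\right)\right]=\mathbf{E}\left[\mathbf{E}_{\mathrm{avoid}}^{a,b;\mathbf{x},\mathbf{y};u,l}[\mathsf{F}]\,\mathsf{G}\left(\mathcal{L}^{\mathrm{ext}}\right)\right],
\end{equation*}
for all bounded continuous $\mathsf{F}$ on $\mathrm{C}(\llbracket 1,k\rrbracket\times[a,b])$ and all bounded continuous $\mathsf{G}$. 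Here $\mathsf{G}$ depends only on $\mathcal{L}$ restricted to $\llbracket 1,M\rrbracket\times[0,T]$ minus the open window, for all $M$ and $T$. By a monotone class argument these products generate the external sigma-algebra \eqref{ExternalSigmaAlgebra}, since continuous cylinder functionals of the restricted paths generate the restriction sigma-algebras. The same identity with $\mathcal{L}^{(N)}$ and $\mathbf{E}_{\mathrm{avoid};N}$ holds for every $N\ge M$ by hypothesis.

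The first step is to pass to the limit on the left-hand side. The functional $\mathcal{L}\mapsto \mathsf{F}(\cdot)\mathsf{G}(\cdot)$ is bounded and continuous on $\mathrm{C}(\llbracket 1,M\rrbracket\times\mathbb{R}_+)$, so the convergence in distribution gives the limit directly.

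The second step handles the right-hand side. Let
\begin{equation*}
\Phi_N(\mathcal{L}^{(N)})=\mathbf{E}_{\mathrm{avoid};N}^{a,b;\mathbf{x}_N,\mathbf{y}_N;u_N,l_N}[\mathsf{F}]\,\mathsf{G}.
\end{equation*}
By Skorokhod's representation I realise $\mathcal{L}^{(N)}|_{\llbracket 1,M\rrbracket}\to\mathcal{L}|_{\llbracket 1,M\rrbracket}$ almost surely, uniformly on compacts. Then the boundary data converge, $\mathbf{x}_N\to\mathbf{x}$, $\mathbf{y}_N\to\mathbf{y}$, $u_N\to u$, $l_N\to l$, uniformly on $[a,b]$; when $\ell=1$ the top boundary is $u\equiv\infty$. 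The limit data satisfy the strict inequalities required in Lemma \ref{Lem-CondDistConv} almost surely. This is where the hypothesis that $(\mathcal{L}_i(a))_i$ and $(\mathcal{L}_i(b))_i$ lie in $\mathbf{W}_\infty^\circ$ for fixed times is used: it gives $\mathbf{x},\mathbf{y}\in\mathbf{W}_k^\circ$ together with $u(a)>x_1$ and $l(a)<x_k$, and similarly at $b$.

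One condition of the lemma is not immediate, namely $u(t)>l(t)$ for all $t\in[a,b]$. This needs the non-intersection of $\mathcal{L}_{\ell-1}$ and $\mathcal{L}_{\ell+k}$ on the whole interval, and I expect it to be the main obstacle. I would get it by induction using the finite-$N$ Gibbs property. Alternatively, apply the argument first with $k$ replaced by the full window $\llbracket \ell-1,\ell+k\rrbracket$, after enlarging to a window whose outer boundaries are far apart. The cleanest route is the top-window case $\ell=1$, where $u\equiv\infty$. There only $l=\mathcal{L}_{k+1}$ is a finite curve, so $u>l$ holds trivially. This yields the Gibbs property for all top windows $\llbracket 1,k\rrbracket$. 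By Remark \ref{RmkNonIntersectionGibbs} it then gives almost sure non-intersection of $\mathcal{L}_1,\dots,\mathcal{L}_{k+1}$ on $[a,b]$ for every $k$, which supplies $u>l$ for general windows. So I would do the top windows first and then general $\ell$.

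Once the hypotheses of Lemma \ref{Lem-CondDistConv} hold, $\Phi_N\to\Phi$ almost surely, and bounded convergence gives convergence of the right-hand side. Equating the limits of both sides proves \eqref{GibbsPropertyFormal}. Finally, the almost sure non-intersection of all paths follows from Remark \ref{RmkNonIntersectionGibbs}, taking a countable union over $k$ and $T\in\mathbb{N}$.
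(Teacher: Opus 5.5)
Your proposal is correct and follows the paper's route. You write the Gibbs property as an identity of expectations against external test functionals, apply the finite-$N$ Gibbs property under a Skorokhod coupling, pass to the limit with bounded convergence and Lemma \ref{Lem-CondDistConv}, and close with monotone class arguments. Your continuous functionals $\mathsf{G}$ of the restricted external paths play the role of the paper's cylinder functions $\mathfrak{H}(\mathcal{L}_{i_1}(t_1),\dots,\mathcal{L}_{i_m}(t_m))$, which is an immaterial difference.

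The one genuine difference is your top-window bootstrap, and it is worth keeping. The paper justifies the well-definedness of $\mathbf{P}_{\mathrm{avoid}}^{a,b;\mathbf{x},\mathbf{y};u,l}$, and the application of Lemma \ref{Lem-CondDistConv}, only through the fixed-time hypothesis. That hypothesis gives $\mathbf{x},\mathbf{y}\in\mathbf{W}_k^\circ$ and the endpoint inequalities. It does not give $u(t)>l(t)$ for all $t\in[a,b]$, which both Definition \ref{DefAvoidingBridges} and the lemma require.

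Your ordering closes this point:
\begin{enumerate}
\item Treat $\ell=1$ first, where $u\equiv\infty$ and the condition is vacuous.
\item Extract almost sure non-intersection of $\mathcal{L}_1,\dots,\mathcal{L}_K$ on $[0,T]$ from the top window $\llbracket 1,K\rrbracket\times[0,T]$, as in Remark \ref{RmkNonIntersectionGibbs}.
\item Only then treat $\ell\ge 2$.
\end{enumerate}
As a side effect, the final non-intersection statement comes out along the way rather than at the end.

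The other two options you mention are not arguments as stated: an unspecified induction, and enlarging to a window whose boundaries are far apart. Any finite window with $\ell\ge 2$ has exactly the same boundary issue. So commit to the top-window ordering and drop the alternatives.
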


\begin{proof}

We fix an index set
$ \llbracket \ell , \ell+k-1 \rrbracket \subset \mathbb{N}$, with $\ell,k\ge 1$,
and an arbitrary time interval \([a,b]\subset{\mathbb{R}_+}\).
Recall the definition of the external sigma-algebra $\mathscr{F}_{\mathrm{ext}}\left(\llbracket \ell, \ell+k-1 \rrbracket \times (a,b) \right)$ from \eqref{ExternalSigmaAlgebra}. To prove the result, we need to show that
for any bounded Borel-measurable function
$\mathsf{F}: \mathrm{C}\left(\llbracket \ell, \ell+k-1 \rrbracket \times [a,b]\right) \to \mathbb{R}_+$,
we have, with $\mathbf{E}$ denoting expectation with respect to $\mathsf{Law}(\mathcal{L})$,
\begin{equation}\label{GibbsGoalIdentity}
\mathbf{E}\left[ \mathsf{F}\left(\mathcal{L}|_{\llbracket \ell, \ell+k-1 \rrbracket \times [a,b]}\right) \big| \mathscr{F}_{\mathrm{ext}}\left(\llbracket \ell, \ell+k-1\rrbracket\times (a,b)\right) \right]
= \mathbf{E}_{\mathrm{avoid}}^{\mathbf{x},\mathbf{y};u,l}\left[\mathsf{F}(\mathsf{Z})\right], \  \ \textnormal{almost surely},  
\end{equation}
where $\mathsf{Z}$ has the law
$\mathbf{P}_{\textnormal{avoid}}^{\mathbf{x},\mathbf{y};u,l}$,
and, the random boundary data is given by
\begin{align*}
\mathbf{x}&=\left(\mathcal{L}_{\ell}(a),\mathcal{L}_{\ell+1}(a),\dots,\mathcal{L}_{\ell+k-1}(a)\right),\  \ \mathbf{y}=
\left(\mathcal{L}_{\ell}(b),\mathcal{L}_{\ell+1}(b),\dots,\mathcal{L}_{\ell+k-1}(b)\right),
\\ u&=
\mathcal{L}_{\ell-1}\  \  (u=+\infty, \textnormal{ if } \ell=1),\hspace{7mm}\  \ l=
\mathcal{L}_{\ell+k}.
\end{align*}
Observe that the right hand side of \eqref{GibbsGoalIdentity} is well-defined almost surely by virtue of the assumption that for any fixed $t\ge 0$, almost surely $(\mathcal{L}_i(t))_{i\in \mathbb{N}}\in \mathbf{W}_\infty^\circ$. Here and below, we omit the parameters $a,b$ from the notation for brevity.

By the definition and since
$ \mathbf{E}_{\mathrm{avoid}}^{\mathbf{x}, \mathbf{y}; u, l}\left[\mathsf{F}(\mathsf{Z})\right]
$
is  $\mathscr{F}_{\mathrm{ext}}\left(\llbracket \ell, \ell+k-1\rrbracket \times (a,b)\right)$-measurable,
it suffices to  verify that
\begin{align*}
\mathbf{E}\left[\mathbf{1}_{\mathscr{B}}\mathsf{F}\left(\mathcal{L}|_{\llbracket \ell, \ell+k-1 \rrbracket \times [a,b]}\right)\right] = \mathbf{E}\left[\mathbf{1}_{\mathscr{B}}\mathbf{E}_{\mathrm{avoid}}^{\mathbf{x},\mathbf{y};u,l}\left[\mathsf{F}(\mathsf{Z})\right]\right],
    \  \ \forall \mathscr{B}\in\mathscr{F}_{\mathrm{ext}}\left(\llbracket \ell, \ell+k-1\rrbracket \times (a,b)\right).
\end{align*}
Consider now the following $\pi$-system of cylinder sets,
\begin{align*}
\mathfrak{C} \overset{\textnormal{def}}{=}  \bigg\{ \mathscr{B}&=
\left\{\left (\mathcal{L}_{i_1}(t_1), \ldots, \mathcal{L}_{i_m}(t_m)\right) \in A \right\}: \\
&(i_j,t_j) \notin \llbracket \ell, \ell+k-1 \rrbracket \times (a,b),\,j\in\llbracket 1,m\rrbracket,\,
m\in\mathbb{N},\; A \subseteq \mathbb{R}^m \textnormal{ Borel}\bigg\}.
\end{align*}
From standard measure-theoretic results, we have 
\begin{align*}
\sigma\left(\mathfrak{C}\right)=\mathscr{F}_{\mathrm{ext}}\left(\llbracket \ell, \ell+k-1 \rrbracket \times (a,b)\right).
\end{align*}
Next, define
\begin{align*}
\mathfrak{D} \overset{\textnormal{def}}{=}  \left\{ \mathscr{B}\in \mathscr{F}_{\mathrm{ext}}\left(\llbracket \ell, \ell+k-1 \rrbracket \times (a,b)\right): \mathbf{E}\left[\mathbf{1}_{\mathscr{B}}\mathsf{F}\left(\mathcal{L}|_{\llbracket \ell, \ell+k-1\rrbracket \times [a,b]}\right)\right] = \mathbf{E}\left[\mathbf{1}_{\mathscr{B}}\, \mathbf{E}_{\mathrm{avoid}}^{\mathbf{x},\mathbf{y};u,l}\left[\mathsf{F}(\mathsf{Z})\right]\right] \right\}.
\end{align*}
We show in the following that
\begin{align}\label{ConditionalExpectationIdentity}
\mathbf{E}\left[\mathbf{1}_{\mathscr{B}}\mathsf{F}\left(\mathcal{L}|_{\llbracket \ell, \ell+k-1 \rrbracket \times [a,b]}\right)\right] = \mathbf{E}\left[\mathbf{1}_{\mathscr{B}}\mathbf{E}_{\mathrm{avoid}}^{\mathbf{x},\mathbf{y};u,l}\left[\mathsf{F}(\mathsf{Z})\right]\right],
    \  \ \forall \mathscr{B}\in\mathfrak{C}.
\end{align}
Assuming \eqref{ConditionalExpectationIdentity}, it is straightforward to verify, using the monotone convergence theorem, that \(\mathfrak{D}\) is a (Dynkin) \(\lambda\)-system containing $\mathfrak{C}$. Hence, the desired result follows by
applying the %(Dynkin) 
$\pi-\lambda$ theorem.

We now proceed to proving the identity
\eqref{ConditionalExpectationIdentity}.  To establish \eqref{ConditionalExpectationIdentity}, we begin by considering the case where $\mathsf{F}$ is continuous (and bounded). By a density argument it moreover suffices to  prove the following. For any $m \in \mathbb{N}$,  $(i_j,t_j) \notin \llbracket \ell, \ell+k-1 \rrbracket \times (a,b),\,j\in\llbracket 1,m\rrbracket$ and any continuous bounded function $\mathfrak{H}$ from $\mathbb{R}^m$ to $\mathbb{R}$ we have:
\begin{align}\label{ConditionalExpectWithFunction}
&\mathbf{E}\left[\mathfrak{H}\left (\mathcal{L}_{i_1}(t_1), \ldots, \mathcal{L}_{i_m}(t_m)\right)\mathsf{F}\left(\mathcal{L}|_{\llbracket \ell, \ell+k-1 \rrbracket \times [a,b]}\right)\right]\nonumber \\&= \mathbf{E}\left[\mathfrak{H}\left (\mathcal{L}_{i_1}(t_1), \ldots, \mathcal{L}_{i_m}(t_m)\right)\mathbf{E}_{\mathrm{avoid}}^{\mathbf{x},\mathbf{y};u,l}\left[\mathsf{F}(\mathsf{Z})\right]\right].
\end{align}
By the Skorokhod representation theorem, we realize $(\mathcal{L}^{(N)})_{N\in \mathbb{N}}$ and $\mathcal{L}$ on a common probability space $(\Omega,\mathbf{Q})$ such that, $\mathbf{Q}$-almost surely, as $N \longrightarrow \infty$, for all $M \in \mathbb{N}$ (with $N \ge M$),
\begin{align}\label{a.s.conv} \left(\mathcal{L}^{(N)}_{i}(t)\right)_{i\in \llbracket 1, M \rrbracket, t\in \mathbb{R}_+}\longrightarrow\left(\mathcal{L}_{i}(t)\right)_{i\in \llbracket 1, M \rrbracket, t\in \mathbb{R}_+} \ \textnormal{ in }\; \mathrm{C}\left(\llbracket 1, M \rrbracket \times\mathbb{R}_+\right).
\end{align}
Hence, $\mathbf{Q}$-almost surely, as $N \longrightarrow \infty$ we have,
\begin{equation}\label{ExtFnConv}
\mathfrak{H}\left (\mathcal{L}^{(N)}_{i_1}(t_1), \ldots, \mathcal{L}^{(N)}_{i_m}(t_m)\right) \longrightarrow \mathfrak{H}\left (\mathcal{L}_{i_1}(t_1), \ldots, \mathcal{L}_{i_m}(t_m)\right).
\end{equation}
Moreover, for each \(N\in\mathbb{N}\)
with $N\ge  \ell+k$, by virtue of the Gibbs property of $\mathcal{L}^{(N)}$ one obtains with $\mathfrak{H}$ as above
\begin{align}
&\mathbf{Q}\left[\mathfrak{H}\left (\mathcal{L}^{(N)}_{i_1}(t_1), \ldots, \mathcal{L}^{(N)}_{i_m}(t_m)\right)\mathsf{F}\left(\mathcal{L}^{(N)}|_{\llbracket \ell, \ell+k-1 \rrbracket \times [a,b]}\right)\right]\nonumber
\\&= \mathbf{Q}\left[\mathfrak{H}\left (\mathcal{L}^{(N)}_{i_1}(t_1), \ldots, \mathcal{L}^{(N)}_{i_m}(t_m)\right) \mathbf{E}_{N,\mathrm{avoid}}^{\mathbf{x}^{(N)},\mathbf{y}^{(N)};u^{(N)},l^{(N)}}\left[\mathsf{F}\left(\mathsf{Z}^{(N)}\right)\right]\right]\label{BGP_N},
\end{align}
where
$\mathsf{Z}^{(N)}$
is 
$\mathbf{P}_{N,\mathrm{avoid}}^{\mathbf{x}^{(N)},\mathbf{y}^{(N)};u^{(N)},l^{(N)}}$-distributed and the random boundary data is given by,
\begin{align*}
\mathbf{x}^{(N)}&=
\left(\mathcal{L}_{\ell}^{(N)}(a),\mathcal{L}_{\ell +1}^{(N)}(a),\dots,\mathcal{L}_{\ell+k-1}^{(N)}(a)\right),\  \ \mathbf{y}^{(N)}=
\left(\mathcal{L}_{\ell}^{(N)}(b),\mathcal{L}_{\ell+1}^{(N)}(b),\dots,\mathcal{L}_{\ell+k-1}^{(N)}(b)\right),
\\ u^{(N)}&=
\mathcal{L}_{\ell-1}^{(N)},\  \  (u^{(N)}=+\infty, \textnormal{ if } \ell=1),\hspace{2mm}\  \ l^{(N)}=
\mathcal{L}_{\ell+k}^{(N)}.
\end{align*} 
In light of \eqref{a.s.conv}
and \eqref{ExtFnConv}, 
we have using the bounded convergence theorem  and moreover using Lemma \ref{Lem-CondDistConv} (recall that for any fixed $t\ge 0$, almost surely $(\mathcal{L}_i(t))_{i\in \mathbb{N}}\in \mathbf{W}_\infty^\circ$)
that
\begin{align}
&\mathbf{Q}\left[\mathfrak{H}\left (\mathcal{L}^{(N)}_{i_1}(t_1), \ldots, \mathcal{L}^{(N)}_{i_m}(t_m)\right) \mathsf{F}\left(\mathcal{L}^{(N)}|_{\llbracket \ell, \ell+k-1\rrbracket \times [a,b]}\right)\right]\nonumber \\
&\xrightarrow{N \to \infty}
\mathbf{Q}\left[\mathfrak{H}\left (\mathcal{L}_{i_1}(t_1), \ldots, \mathcal{L}_{i_m}(t_m)\right)\mathsf{F}\left(\mathcal{L}|_{\llbracket \ell, \ell+k-1\rrbracket \times [a,b]}\right)\right], \label{RHS-conv}
\\
&\mathbf{Q}\left[\mathfrak{H}\left (\mathcal{L}^{(N)}_{i_1}(t_1), \ldots, \mathcal{L}^{(N)}_{i_m}(t_m)\right)\mathbf{E}_{N,\mathrm{avoid}}^{\mathbf{x}^{(N)},\mathbf{y}^{(N)},u^{(N)},l^{(N)}}\left[\mathsf{F}\left(\mathsf{Z}^{(N)}\right)\right]\right]\nonumber
\\
&\xrightarrow{N \to \infty}
\mathbf{Q}\left[\mathfrak{H}\left (\mathcal{L}_{i_1}(t_1), \ldots, \mathcal{L}_{i_m}(t_m)\right)\mathbf{E}_{\mathrm{avoid}}^{\mathbf{x},\mathbf{y};u,l}[\mathsf{F}(\mathsf{Z})]\right].\label{LHS-conv}
\end{align}
Therefore, \eqref{ConditionalExpectWithFunction} and thus \eqref{ConditionalExpectationIdentity} with $\mathsf{F}$ continuous and bounded follows from
\eqref{RHS-conv} and \eqref{LHS-conv} by taking $N\to\infty$ in \eqref{BGP_N}.
Now, let us define
\begin{align*}
    \mathscr{G}\overset{\textnormal{def}}{=}  \bigg\{ &\mathsf{F} :\mathrm{C}\left(\llbracket \ell, \ell+k-1\rrbracket \times [a,b]\right)\to\mathbb{R}_+:  \\
    &\mathsf{F} \textnormal{ is a bounded, Borel-measurable function satisfying }\eqref{ConditionalExpectationIdentity}  \bigg\}.
\end{align*}
It is straightforward to verify that \(\mathscr{G}\) is a monotone class. Moreover, from the argument above, \(\mathscr{G}\) contains the algebra of bounded continuous functions on \(\mathrm{C}\left(\llbracket \ell, \ell+k-1\rrbracket \times [a,b]\right)\), which generates the Borel \(\sigma\)-algebra on this space. Therefore, by the monotone class theorem, \(\mathscr{G}\) contains all bounded Borel-measurable functions on \(\mathrm{C}\left(\llbracket \ell, \ell+k-1\rrbracket \times [a,b]\right)\) which completes the proof of \eqref{ConditionalExpectationIdentity}. The very final statement then follows from Remark \ref{RmkNonIntersectionGibbs}.
\end{proof}

We can finally prove the following.

\begin{prop}\label{PropGibbsPropertyGL}
Let $\theta \in \mathbb{R}$. Let $\boldsymbol{v}=(\mathbf{x},\gamma)\in \Upsilon_+$ with $\mathbf{x}=(x_i)_{i\in \mathbb{N}}\in \mathbf{W}_{\infty;+}^\circ$. Then, the projection of $\mathbf{X}_{GL}^\bullet$ on $(\mathsf{x}_i(\bullet))_{i\in \mathbb{N}}$ viewed as a $\mathrm{C}(\mathbb{N}\times\mathbb{R}_+)$-valued line ensemble has the Gibbs property with respect to exponential Brownian bridge.
\end{prop}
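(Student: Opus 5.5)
We work in logarithmic coordinates and show that $\mathcal{L}_i(t)=\log \mathsf{x}_i(t)$ has the Gibbs property with respect to Brownian bridges. This is equivalent to the stated property with respect to exponential Brownian bridges, since $\log$ is a monotone homeomorphism from $(0,\infty)$ to $\mathbb{R}$.

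\emph{Step 1: finite $N$.} The approximating line ensembles are $\mathcal{L}^{(N)}_i(t)=\log(N^{-1}\mathsf{x}_i^{(N)}(t))$, $i\in\llbracket 1,N\rrbracket$. We start $\mathsf{x}^{(N)}(0)$ from points of $\mathbf{W}_{N;+}^\circ$, chosen so that the convergence hypothesis of Proposition \ref{PropConvergenceOnPathSpace} holds toward $\boldsymbol{v}$; for instance, take $N x_i$ for $i\le N$, slightly perturbed and with the mass $\gamma-\sum x_i$ spread over many small distinct coordinates. By Proposition \ref{PropFiniteDimProperties}, the transition kernel \eqref{TransitionKernel} is a Doob $h$-transform of the Karlin--McGregor semigroup of $N$ copies of the geometric Brownian motion \eqref{SDEgeomBM}, with $\mathrm{h}=\Delta$ the Vandermonde determinant and $\mu_N=\lambda_N$. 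After the change of variables $y\mapsto \log(y/N)$, this becomes an $h$-transform of Brownian motions with constant drift $\frac{1+\theta}{2}-N$ (up to the Jacobian, which cancels in the ratio). Proposition \ref{Gibbs-finiteN} then shows that $\mathcal{L}^{(N)}$ has the Gibbs property with respect to the bridge of Brownian motion with drift $\frac{1+\theta}{2}-N$. A Brownian bridge with constant drift has the same law as the standard Brownian bridge, since the drift is removed by a Girsanov factor that depends only on the endpoints. So the $\mathsf{A}_N$-bridge is the standard Brownian bridge for every $N$. This makes the bridge-convergence hypothesis of Proposition \ref{PropositionLimitingGibbs} immediate: Brownian bridges with converging endpoints converge weakly. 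Assumption \ref{ass-bridge} for Brownian bridges is classical \cite{CorwinHammond1}.

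\emph{Step 2: convergence of the ensembles.} Proposition \ref{PropConvergenceOnPathSpace} gives convergence in distribution of $(N^{-1}\mathsf{x}_i^{(N)})_{i\le M}$ to $(\mathsf{x}_i)_{i\le M}$ in $\mathrm{C}(\llbracket 1,M\rrbracket\times\mathbb{R}_+)$ for every $M$, because the product topology on $\Upsilon_+$ induces convergence of finitely many coordinates. To pass to logarithms by the continuous mapping theorem, the limit paths must be strictly positive, and this is where Proposition \ref{NonCollisionProp} enters. It gives that almost surely $(\mathsf{x}_i(t))_{i\in\mathbb{N}}\in\mathbf{W}_{\infty;+}^\circ$ for all $t\ge0$, so every limit path is strictly positive and the coordinates are distinct at each time. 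The map $f\mapsto\log f$ is continuous at positive continuous functions for uniform convergence on compacts, so $\mathcal{L}^{(N)}|_{\llbracket1,M\rrbracket}\to\mathcal{L}|_{\llbracket1,M\rrbracket}$ in distribution. The same proposition supplies the fixed-time hypothesis $(\mathcal{L}_i(t))_{i}\in\mathbf{W}^\circ_\infty$ required in Proposition \ref{PropositionLimitingGibbs}.

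\emph{Step 3: conclusion.} Applying Proposition \ref{PropositionLimitingGibbs} gives the Brownian Gibbs property for $\mathcal{L}$, and exponentiating yields the statement. I expect the main obstacle to be the technical care in Steps 1--2 rather than anything conceptual. First, in Step 1 one must check that $\lambda_N$ and the drift terms cancel correctly in the bridge ratio, including the Jacobian from the logarithmic change of variables. Second, the initial condition lies in the closed chamber, and the zero coordinates for $i>N$ have $\log=-\infty$. This is harmless, because only finitely many indices are ever examined and the limit is strictly positive. If needed, one can also restrict to $t\ge\varepsilon>0$, where the finite-$N$ process lies in $\mathbf{W}_{N;+}^\circ$.
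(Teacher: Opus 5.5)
Your proposal is correct and follows the paper's proof essentially step for step. You pass to logarithmic coordinates and obtain the finite-$N$ Brownian Gibbs property from Proposition \ref{Gibbs-finiteN}, using the $h$-transform structure of \eqref{TransitionKernel}; since the drifted bridges are standard Brownian bridges, the bridge-convergence hypothesis is trivial. You then use Propositions \ref{PropConvergenceOnPathSpace} and \ref{NonCollisionProp} for convergence of the logarithms and fixed-time strict ordering, and conclude with Proposition \ref{PropositionLimitingGibbs}.
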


\begin{proof}
We prove this for $(\log\mathsf{x}_i(\bullet))_{i\in \mathbb{N}}$ in that it has the Brownian Gibbs property. By applying Proposition \ref{Gibbs-finiteN} to \eqref{TransitionKernel} and then making a logarithmic change of variables or alternatively directly from \cite{JonesOConnell}, 
\begin{equation*}
(\log(N^{-1}\mathsf{x}^{(N)}(\bullet)))_{i=1}^N=\left(\xi_i^{(N)}\left(\bullet\right)-\frac{N\bullet}{2}-\log N
\right)_{i=1}^N,
\end{equation*}
for $\mathbf{x}^{(N)}(0)\in \mathbf{W}_{N;+}^{\circ}$ satisfies the Brownian Gibbs property. Moreover, from Proposition \ref{PropConvergenceOnPathSpace} and Proposition \ref{NonCollisionProp} we have convergence, as $N \to \infty$, for all $M \in \mathbb{N}$,
\begin{equation*}
(\log(N^{-1}\mathsf{x}^{(N)}(\bullet)))_{i\in \llbracket 1,M \rrbracket}\overset{\mathrm{d}}{\longrightarrow} (\log\mathsf{x}_i(\bullet))_{i\in \llbracket 1, M \rrbracket}.
\end{equation*}
 Finally, from Proposition \ref{NonCollisionProp} for any fixed time $t\ge 0$ almost surely $(\log\mathsf{x}_i(t))_{i\in \mathbb{N}}\in \mathbf{W}_\infty^\circ$. The assumption on the convergence of independent bridges is moreover trivially satisfied. Hence, we can apply Proposition \ref{PropositionLimitingGibbs} to conclude the proof. 
\end{proof}

\subsection{The evolution of the reverse characteristic polynomial}

We begin with the following result.

\begin{prop}\label{PropGLfiniteCharPolyEq}
    For any $N\in\mathbb{N}$ and $\theta\in\mathbb{R}$,
    $\mathfrak{gl}_{t;N}(z)$ satisfies the following equation
\begin{align}\label{phi^N-equ}
    \mathrm{d}\mathfrak{gl}_{t;N}(z)
    =
\mathrm{d}\mathsf{M}^N_t(z;\mathfrak{gl})
       +\left[\frac{\theta}{2}z\partial_z\mathfrak{gl}_{t;N}(z)\mathrm{d}t
    -
    \frac{z^2}{2}\partial_{z}^2\mathfrak{gl}_{t;N}(z)\right]\mathrm{d}t,
\end{align}
where $\mathsf{M}^N\in \mathrm{C}(\mathbb{R}_+;\mathsf{H}(\mathbb{C}))$ and for all $z\in \mathbb{C}$, $t\mapsto \mathsf{M}_t^N(z;\mathfrak{gl})$ are continuous local martingales with covariation, for $z,w \in \mathbb{C}$,
\begin{equation}\label{CovarGLfiniteN}
\langle \mathsf{M}_t^N(z;\mathfrak{gl}),\mathsf{M}_t^N(w;\mathfrak{gl})\rangle=\frac{zw}{z-w}\int_0^t\left(\mathfrak{gl}_{s;N}(z)\partial_w\mathfrak{gl}_{s;N}(w)-\mathfrak{gl}_{s;N}(w)\partial_z\mathfrak{gl}_{s;N}(z)\right)\mathrm{d}s.
\end{equation}

\end{prop}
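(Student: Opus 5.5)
The plan is a direct application of It\^o's formula to the polynomial $\mathfrak{gl}_{t;N}(z)=\prod_{i=1}^N(1-a_i(t)z/N)$, where $a_i=\mathsf{x}_i^{(N)}$ solves \eqref{GLinitial}. This is followed by two algebraic symmetrisation identities, one for the drift and one for the covariation.

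First assume $\mathsf{x}^{(N)}(0)\in\mathbf{W}_{N;+}^\circ$. By Proposition \ref{PropFiniteDimProperties} the coordinates stay in the interior, so the interaction drift in \eqref{GLinitial} is finite for all times. The function $\mathfrak{gl}_{t;N}(z)$ is affine in each $a_i$ separately, so $\partial_{a_i}^2\mathfrak{gl}=0$. The driving Brownian motions are independent, so $\mathrm{d}\langle a_i,a_j\rangle=0$ for $i\ne j$. Hence It\^o's formula has no second-order terms. Writing $\mathfrak{gl}_{(i)}=\mathfrak{gl}/(1-a_iz/N)$ and $\mathfrak{gl}_{(ij)}=\mathfrak{gl}/((1-a_iz/N)(1-a_jz/N))$, we get
\begin{equation*}
\mathrm{d}\mathfrak{gl}_{t;N}(z)=-\frac{z}{N}\sum_{i=1}^N\mathfrak{gl}_{(i)}(z)\,\mathrm{d}a_i .
\end{equation*}
The martingale part is therefore $\mathrm{d}\mathsf{M}^N_t(z)=-\sum_i \frac{z a_i}{N}\mathfrak{gl}_{(i)}(z)\,\mathrm{d}\mathsf{w}_i$. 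For each $t$ this is a polynomial in $z$ whose coefficients are continuous local martingales, so $\mathsf{M}^N\in\mathrm{C}(\mathbb{R}_+;\mathsf{H}(\mathbb{C}))$.

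For the drift, the $\theta$-term is $-\frac{\theta}{2}\frac{z}{N}\sum_i a_i\mathfrak{gl}_{(i)}(z)=\frac{\theta}{2}z\partial_z\mathfrak{gl}_{t;N}(z)$, since $\partial_z\mathfrak{gl}=-\sum_i\frac{a_i}{N}\mathfrak{gl}_{(i)}$. For the interaction term I pair the ordered pairs $(i,j)$ and $(j,i)$. Using $\mathfrak{gl}_{(i)}=(1-a_jz/N)\mathfrak{gl}_{(ij)}$, each unordered pair contributes
\begin{equation*}
-\frac{z}{N}\,\mathfrak{gl}_{(ij)}\,\frac{a_ia_j}{a_i-a_j}\Big[(1-\tfrac{a_jz}{N})-(1-\tfrac{a_iz}{N})\Big]=-\frac{z^2}{N^2}a_ia_j\,\mathfrak{gl}_{(ij)} .
\end{equation*}
The singular denominator cancels here. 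Since $\partial_z^2\mathfrak{gl}=\sum_{i\ne j}\frac{a_ia_j}{N^2}\mathfrak{gl}_{(ij)}$, which is twice the sum over unordered pairs, the interaction drift equals $-\frac{z^2}{2}\partial_z^2\mathfrak{gl}_{t;N}(z)$. This gives \eqref{phi^N-equ}.

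For the covariation, independence of the $\mathsf{w}_i$ gives
\begin{equation*}
\mathrm{d}\langle\mathsf{M}^N(z),\mathsf{M}^N(w)\rangle=zw\sum_i\frac{a_i^2}{N^2}\mathfrak{gl}_{(i)}(z)\mathfrak{gl}_{(i)}(w)\,\mathrm{d}t .
\end{equation*}
Next I compute $\mathfrak{gl}(z)\partial_w\mathfrak{gl}(w)-\mathfrak{gl}(w)\partial_z\mathfrak{gl}(z)$ by writing $\mathfrak{gl}(z)=(1-a_iz/N)\mathfrak{gl}_{(i)}(z)$ inside the $i$-th summand. This yields
\begin{equation*}
\sum_i\frac{a_i}{N}\mathfrak{gl}_{(i)}(z)\mathfrak{gl}_{(i)}(w)\Big[(1-\tfrac{a_iw}{N})-(1-\tfrac{a_iz}{N})\Big]=(z-w)\sum_i\frac{a_i^2}{N^2}\mathfrak{gl}_{(i)}(z)\mathfrak{gl}_{(i)}(w),
\end{equation*}
which is exactly \eqref{CovarGLfiniteN} (extended analytically to $z=w$).

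The only genuine subtlety, and the step I expect to need the most care, is initial conditions on the boundary of $\mathbf{W}_{N;+}$, where coordinates may coincide at time $0$. After symmetrisation, both the drift and the covariation are polynomials in $(a_i)$ and $z$, so the equation is meaningful for all starting points. To justify it I would use either of two arguments.

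The first uses the instantaneous entrance into $\mathbf{W}_{N;+}^\circ$. Apply the above on $[\varepsilon,\infty)$ via the Markov property and let $\varepsilon\downarrow0$, using path continuity.

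The second applies It\^o's formula to the symmetric polynomial coefficients (the elementary symmetric functions $e_k(a)$). Their semimartingale decompositions follow from the strong solution of \eqref{GLinitial}, and the interaction term is integrable by the same pair cancellation.
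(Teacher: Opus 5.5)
Your proposal is correct and follows the paper's proof essentially step for step. It uses It\^o's formula with no second-order terms, because $\mathfrak{gl}_{t;N}$ is affine in each coordinate. It then symmetrises over pairs to turn the interaction drift into $-\frac{z^2}{2}\partial_z^2\mathfrak{gl}_{t;N}$, and factors out $(1-\mathsf{x}_i^{(N)}z/N)$ to get the covariation $\frac{zw}{z-w}(\cdots)$.

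The differences are cosmetic. The paper rescales to $N^{-1}\mathsf{x}_i^{(N)}$, which solves the same SDE, instead of carrying the $1/N$ factors. Your discussion of boundary initial conditions is extra care that the paper does not spell out.
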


\begin{proof} It will be more convenient to consider the rescaled process $\hat{\mathsf{x}}^{(N)}_i(t)=N^{-1}\mathsf{x}_i^{(N)}$ so that $\mathfrak{gl}_{t;N}(z)=\prod_{i=1}^N(1-\hat{\mathsf{x}}_i^{(N)}(t)z)$. Observe that, the $(\hat{\mathsf{x}}_i^{(N)})_{i=1}^N$ is the unique solution to the exact same equation \eqref{GLinitial} but with the scaled by $N^{-1}$ initial condition. Thus abusing notation in the computations below we still write $\mathsf{x}_i^{(N)}$ in place of $\hat{\mathsf{x}}_i^{(N)}$.

Hence, using It\^{o}'s formula applied to $\mathfrak{gl}_{t;N}(z)$, we obtain 
\begin{align*}
    \mathrm{d}\mathfrak{gl}_{t;N}(z)
    &=
    \sum_{i=1}^N\frac{-z\mathfrak{gl}_{t;N}(z)}{1-\mathsf{x}_i^{(N)}(t)z}\mathrm{d}\mathsf{x}_i^{(N)}(t)
    \\
    &=
	\mathrm{d}\mathsf{M}_t^N(z;\mathfrak{gl})
+
\sum_{i=1}^N\frac{-z\mathfrak{gl}_{t;N}(z)}{1-\mathsf{x}_i^{(N)}(t)z}
\left[
     \frac{\theta}{2}\mathsf{x}_i^{(N)}(t)
     +\sum_{j=1,j\neq i}^N\frac{\mathsf{x}_i^{(N)}(t)\mathsf{x}^{(N)}_j(t)}{\mathsf{x}^{(N)}_i(t)-\mathsf{x}^{(N)}_j(t)}
     \right]\mathrm{d}t,
\end{align*}
where the local martingale term $\mathsf{M}_t^N(z;\mathfrak{gl})$ is given by
\begin{align}\label{M_t^N}
\mathrm{d}\mathsf{M}_t^N(z;\mathfrak{gl})&=\sum_{i=1}^N\frac{-z\mathsf{x}_i^{(N)}(t)\mathfrak{gl}_{t;N}(z)}{1-\mathsf{x}_i^{(N)}(t)z}\mathrm{d}\mathsf{w}_i(t).
\end{align}
Note that, we have
\begin{align}
    \partial_z\mathfrak{gl}_{t;N}(z)
    &=
    -\sum_{i=1}^N\frac{\mathsf{x}_i^{(N)}(t)}{1-\mathsf{x}_i^{(N)}(t)z}\mathfrak{gl}_{t;N}(z),
    \label{partial_z phi}
    \\
    \partial_{z}^2\mathfrak{gl}_{t;N}(z)
    &=
    \left(\left(\sum_{i=1}^N\frac{\mathsf{x}_i^{(N)}(t)}{1-\mathsf{x}_i^{(N)}(t)z}
    \right)^2-\sum_{i=1}^N \left(\frac{\mathsf{x}_i^{(N)}(t)}{1-\mathsf{x}_i^{(N)}(t)z}\right)^2
    \right)
    \mathfrak{gl}_{t;N}(z)\\
    &=
\sum_{\underset{i\neq j}{i,j=1}}^N\frac{\mathsf{x}_i^{(N)}(t)\mathsf{x}_j^{(N)}(t)}{\left(1-\mathsf{x}_i^{(N)}(t)z\right)\left(1-\mathsf{x}_j^{(N)}(t)z\right)}
    \mathfrak{gl}_{t;N}(z).
    \label{partial_zz phi}
\end{align}
Moreover, observe that we can write,
\begin{align*}
\sum_{i=1}^N\frac{1}{1-\mathsf{x}_i^{(N)}(t)z}
    \sum_{\underset{i\neq j}{i,j=1}}^N
    \frac{\mathsf{x}_i^{(N)}(t)\mathsf{x}_j^{(N)}(t)}{\mathsf{x}^{(N)}_i(t)-\mathsf{x}^{(N)}_j(t)}
    &=
    \frac{1}{2}
    \sum_{\underset{i\neq j}{i,j=1}}^N
    \frac{\mathsf{x}_i^{(N)}(t)\mathsf{x}_j^{(N)}(t)}
    {\mathsf{x}^{(N)}_i(t)-\mathsf{x}^{(N)}_j(t)}
    \left[
    \frac{1}{1-\mathsf{x}_i^{(N)}(t)z}
    -
    \frac{1}{1-\mathsf{x}_j^{(N)}(t)z}
    \right]
    \nonumber
    \\
    &=
   \frac{z}{2}
     \sum_{\underset{i\neq j}{i,j=1}}^N\frac{\mathsf{x}_i^{(N)}(t)\mathsf{x}_j^{(N)}(t)}{\left(1-\mathsf{x}_i^{(N)}(t)z\right)\left(1-\mathsf{x}_j^{(N)}(t)z\right)}.
\end{align*}
Thus, making use of all the above, we obtain \eqref{phi^N-equ}.
Next, from 
\eqref{M_t^N},
\eqref{partial_z phi}
and
\eqref{partial_zz phi}
we can compute the quadratic covariation of $\mathsf{M}_t^N(z;\mathfrak{gl})$ and $\mathsf{M}_t^N(w;\mathfrak{gl})$, for $z,w\in \mathbb{C}$,
\begin{align*}
\frac{\mathrm{d}}{\mathrm{d}t}\langle\mathsf{M}_t^N(z;\mathfrak{gl}),
\mathsf{M}_t^N(w;\mathfrak{gl})
\rangle
&=
\sum_{i=1}^N
\frac{zw\left(\mathsf{x}_i^{(N)}(t)\right)^2}{\left(1-\mathsf{x}_i^{(N)}(t)z\right)
\left(1-\mathsf{x}_i^{(N)}(t)w\right)}
\mathfrak{gl}_{t;N}(z)
\mathfrak{gl}_{t;N}(w)
\\
&=
\frac{zw}{z-w}
\left(
\mathfrak{gl}_{t;N}(z)\partial_{w}\mathfrak{gl}_{t;N}(w)
-
\mathfrak{gl}_{t;N}(w)
\partial_z\mathfrak{gl}_{t;N}(z)\right),
\end{align*}
which completes the proof.
\end{proof}

We can now easily take the limit.

\begin{prop}\label{PropEntireEvolGL}
Let $\theta\in \mathbb{R}$ and suppose $\mathfrak{gl}_{0;N} \overset{N \to \infty}{\longrightarrow} \mathfrak{E}_{\boldsymbol{v}}^+$ in $\mathfrak{LP}_+$. Then, as $N\to \infty$,
\begin{equation*}
\mathfrak{gl}_{\bullet;N}\overset{\mathrm{d}}{\longrightarrow} \mathfrak{gl}_\bullet,
\end{equation*}
in $\mathrm{C}(\mathbb{R}_+;\mathfrak{LP}_+)$ where $\mathfrak{gl}_t(z)=\mathfrak{E}_{\mathbf{X}_{GL}^t}^+(z)$ is a Markov process in $\mathfrak{LP}_+$ which solves the equation \eqref{GLequation}.
\end{prop}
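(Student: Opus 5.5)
The plan is to combine the path-space convergence of Proposition \ref{PropConvergenceOnPathSpace} with the finite-$N$ equation of Proposition \ref{PropGLfiniteCharPolyEq}, and then pass to the limit in the associated martingale problem.

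\emph{Step 1: convergence of the polynomials.} The map $\Upsilon_+\ni\boldsymbol{v}\mapsto\mathfrak{E}^+_{\boldsymbol{v}}\in\mathfrak{LP}_+$ is a homeomorphism; this is the standard Laguerre--P\'olya correspondence used in \cite{AM24}. The first thing to check is that the finite-$N$ data embed correctly. For the finite configuration $\hat{\mathbf{x}}=(N^{-1}\mathsf{x}^{(N)}_i)$ with $\gamma=\sum_i\hat x_i$, one has $\mathfrak{E}^+_{(\hat{\mathbf{x}},\gamma)}=\mathfrak{gl}_{t;N}$ exactly. Hence the hypothesis $\mathfrak{gl}_{0;N}\to\mathfrak{E}^+_{\boldsymbol v}$ is equivalent to convergence of the initial data in $\Upsilon_+$. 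Proposition \ref{PropConvergenceOnPathSpace} then gives convergence in distribution in $\mathrm{C}(\mathbb{R}_+;\Upsilon_+)$. The continuous mapping theorem applied to the induced map $\mathrm{C}(\mathbb{R}_+;\Upsilon_+)\to\mathrm{C}(\mathbb{R}_+;\mathfrak{LP}_+)$ yields $\mathfrak{gl}_{\bullet;N}\to\mathfrak{gl}_\bullet=\mathfrak{E}^+_{\mathbf{X}^\bullet_{GL}}$. The Markov property is inherited from the Feller process $\mathbf{X}_{GL}$ through the homeomorphism.

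\emph{Step 2: identifying the limit equation.} Define
\[
\mathsf{M}_t(z):=\mathfrak{gl}_t(z)-\mathfrak{gl}_0(z)-\int_0^t\Big[\tfrac{\theta}{2}z\partial_z\mathfrak{gl}_s(z)-\tfrac{z^2}{2}\partial_z^2\mathfrak{gl}_s(z)\Big]\mathrm{d}s,
\]
and define $\mathsf{M}^N$ analogously. Since derivatives are continuous on $\mathsf{H}(\mathbb{C})$, the map from paths to $\mathsf{M}$ is continuous, so $(\mathfrak{gl}_{\bullet;N},\mathsf{M}^N)\to(\mathfrak{gl}_\bullet,\mathsf{M})$ jointly in distribution. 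By Proposition \ref{PropGLfiniteCharPolyEq}, $\mathsf{M}^N(z)$ and
\[
\mathsf{M}^N_t(z)\mathsf{M}^N_t(w)-\frac{zw}{z-w}\int_0^t\big(\mathfrak{gl}_{s;N}(z)\partial_w\mathfrak{gl}_{s;N}(w)-\mathfrak{gl}_{s;N}(w)\partial_z\mathfrak{gl}_{s;N}(z)\big)\mathrm{d}s
\]
are local martingales, with the right side interpreted by analytic continuation at $z=w$. To pass to the limit, localize with the stopping times $\tau_R^N=\inf\{t:\sup_{|z|\le R}|\mathfrak{gl}_{t;N}(z)|\ge R\}$; taking these at continuity levels makes them converge in law along with the paths. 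Before $\tau_R^N$, all integrands are bounded by Cauchy estimates, so the stopped processes are genuine martingales with uniformly bounded second moments. Weak limits of uniformly integrable continuous martingales are martingales with respect to the filtration generated by the limit, which is the standard argument from Ethier--Kurtz. It follows that $\mathsf{M}(z)$ is a continuous local martingale with the covariation \eqref{CovarEq}, by the same argument applied to real and imaginary parts.

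\emph{Main obstacle.} The delicate point is the localization and uniform integrability of the product terms, because $\mathfrak{gl}$ is entire and unbounded. The plan is to control this using $\sup_{|z|\le R}|\mathfrak{E}^+_{\boldsymbol v}(z)|\le \mathrm{e}^{R\gamma}$ on $\Upsilon_+$. This bound reduces everything to the tightness of $\boldsymbol{\gamma}(\cdot)$, which is already supplied by Proposition \ref{PropConvergenceOnPathSpace}.
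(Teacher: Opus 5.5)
Your proposal follows the paper's proof. The paper also gets convergence and the Markov property from the homeomorphism $\boldsymbol{v}\mapsto\mathfrak{E}^+_{\boldsymbol v}$ and Proposition \ref{PropConvergenceOnPathSpace}. It likewise obtains the limiting equation by writing $\mathsf{M}^N$ as a continuous functional of $\mathfrak{gl}_{\bullet;N}$ and passing to the limit; this works because the finite-$N$ drift in Proposition \ref{PropGLfiniteCharPolyEq} is exactly the limiting drift. Your martingale-problem argument with localisation spells out what the paper compresses into ``and so do the quadratic covariations''.

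There is one slip to fix. Your stopping time $\tau_R^N$ uses the same $R$ for the radius and the threshold, and then it localises nothing. For example, $|\mathfrak{E}^+_{\boldsymbol v}(-R)|=\mathrm{e}^{(\gamma-\sum_i x_i)R}\prod_i(1+Rx_i)$ exceeds $R$ for all large $R$ as soon as two of the $x_i$ are positive. So $\tau_R^N=0$ for large $R$ and $N$, and the stopped martingales carry no information.

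Decouple the radius from the threshold. Stop instead at $\inf\{t: N^{-1}\sum_i \mathsf{x}_i^{(N)}(t)\ge K\}$, with $K$ a continuity level. This converges along with the paths, since $\boldsymbol{\gamma}(t)=-\partial_z\mathfrak{gl}_t(0)$. Before this time, the bound $|\mathfrak{E}^+_{\boldsymbol v}(z)|\le \mathrm{e}^{\gamma|z|}$ together with Cauchy estimates bounds the drift and the covariation integrands uniformly in $N$ on any fixed compact set of $z$. This is exactly the control you identify in your last paragraph.

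The bounded quadratic variation, via Burkholder--Davis--Gundy, then gives the higher moments needed for uniform integrability of the products $\mathsf{M}^N_t(z)\mathsf{M}^N_t(w)$. Second moments alone would not suffice.
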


\begin{proof}
We note that we have that, see e.g. \cite{Theo-RandomEntireFunctions}, the natural maps, $\boldsymbol{v}\mapsto \mathfrak{E}_{\boldsymbol{v}}^+ \textnormal{ and } \boldsymbol{u}\mapsto\mathfrak{E}_{\boldsymbol{u}}$
are homeomorphisms from $\Upsilon_+$ to $\mathfrak{LP}_+$ and $\Upsilon$ to $\mathfrak{LP}$ respectively. In particular, this lifts to a homeomorphism from  $\mathrm{C}(\mathbb{R}_+;\Upsilon_+)$ to $\mathrm{C}(\mathbb{R}_+;\mathfrak{LP}_+)$  and from $\mathrm{C}(\mathbb{R}_+;\Upsilon)$ to $\mathrm{C}(\mathbb{R}_+;\mathfrak{LP})$. Hence, the fact that if $\mathfrak{gl}_{0;N} \overset{N \to \infty}{\longrightarrow} \mathfrak{E}_{\boldsymbol{v}}^+$ in $\mathfrak{LP}_+$, then as $N\to \infty$,
\begin{equation*}
\mathfrak{gl}_{\bullet;N}\overset{\mathrm{d}}{\longrightarrow} \mathfrak{gl}_\bullet,
\end{equation*}
in $\mathrm{C}(\mathbb{R}_+;\mathfrak{LP}_+)$ is a direct consequence of Proposition \ref{PropConvergenceOnPathSpace}. Moreover, we have that $\mathfrak{gl}_t$ is the image of $\mathbf{X}_{GL}^t$ under the map  $\boldsymbol{v}\mapsto \mathfrak{E}_{\boldsymbol{v}}^+$ and the Markov property is also immediate as this map is a measurable bijection. Now, let us couple, using Skorokhod's representation theorem, the $(\mathfrak{gl}_{\bullet;N})_{N \in \mathbb{N}}$ and $\mathfrak{gl}_{\bullet}$ on a single probability space so that convergence in $\mathrm{C}(\mathbb{R}_+;\mathfrak{LP})$, equivalently $\mathrm{C}(\mathbb{R}_+;\mathsf{H}(\mathbb{C}))$, takes place almost surely. Observe that, by Cauchy's integral formula we have for all $k\in \mathbb{N}\cup\{0\}$ that, $\partial_z^k\mathfrak{gl}_{t;N}(z)\longrightarrow\partial_z^k\mathfrak{gl}_{t}(z)$ in $\mathrm{C}(\mathbb{R}_+;\mathsf{H}(\mathbb{C}))$. Moreover, note that for $f\in \mathrm{C}(\mathbb{R}_+;\mathsf{H}(\mathbb{C}))$ the function $\mathsf{Int}_f(t)=\int_0^tf(s)\mathrm{d}s$ also belongs to $\mathrm{C}(\mathbb{R}_+;\mathsf{H}(\mathbb{C}))$ and if $f_N \longrightarrow f$ in $\mathrm{C}(\mathbb{R}_+;\mathsf{H}(\mathbb{C}))$ then also $\mathsf{Int}_{f_N} \longrightarrow \mathsf{Int}_{f}$ in $\mathrm{C}(\mathbb{R}_+;\mathsf{H}(\mathbb{C}))$. Hence, by solving for $\mathsf{M}_t^N(z;\mathfrak{gl})$ in \eqref{phi^N-equ} we obtain that it converges in $\mathrm{C}(\mathbb{R}_+;\mathsf{H}(\mathbb{C}))$ and so do the quadratic covariations from \eqref{CovarGLfiniteN} to the desired formula in \eqref{CovarEq} which completes the proof.
\end{proof}

\subsection{The Stieltjes transform and equations for the zeros}

We will derive equations for the zeros of $(\mathfrak{gl}_t)_{t\ge 0}$ or more precisely the reciprocals of them. Towards this end, define the following analogue of the Stieltjes transform in the infinite setting:
\begin{align}\label{psi}
    \psi_t^{GL}(z)
    &\overset{\textnormal{def}}{=}
\partial_z
\log\mathfrak{gl}_t(z)
=
\sum_{j=1}^\infty
\frac{-\mathsf{x}_j(t)}{1-\mathsf{x}_j(t)z}
+ 
\sum_{j=1}^\infty\mathsf{x}_j(t)
-
\boldsymbol{\gamma}(t),
\  \ z\in \mathbb{C}\setminus
\left\{\mathsf{x}_j^{-1}(t), j\in\mathbb{N}\right\}, \ \,t\ge 0.
\end{align}
We have the following result.

\begin{prop}\label{PropStieltjesGL}
The process $(\psi_t^{GL}(z))_{t\ge 0}$ satisfies the following Burgers-type equation in $\mathbb{C}\backslash \mathbb{R}$,
\begin{align}\label{psiEqu}
    \mathrm{d}\psi_t^{GL}(z)
    =&
\mathrm{d}\mathsf{m}_t\left(z\right)
    +\left[\frac{\theta}{2}\partial_z\left(z\psi_t^{GL}(z)\right)
    -
    \frac{1}{2}\partial_z
    \left(z^2\left(\psi_t^{GL}(z)\right)^2\right)\right]\mathrm{d}t,
\end{align}
where, for $z\in\mathbb{C}\backslash \mathbb{R}$, 
$t\mapsto\mathsf{m}_t(z)$ is a continuous local martingale 
with quadratic covariation, for $z,w \in \mathbb{C}\backslash \mathbb{R}$,
\begin{align}
\label{d(m(z),m(z'))}
\
    \left\langle\mathsf{m}_t(z),
    \mathsf{m}_t(w)\right\rangle
&=
\int_0^t-\partial_z\partial_w \left( zw\frac{\psi_s^{GL}(z)-\psi_s^{GL}(w)}{z-w}\right)\mathrm{d}s.
\end{align}    
\end{prop}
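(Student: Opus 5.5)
Our plan is to derive \eqref{psiEqu} from \eqref{GLequation} via It\^o's formula for $\log$. The cleanest route is to work first at finite $N$ and then pass to the limit, exactly as in the proof of Proposition \ref{PropEntireEvolGL}. For $z\in\mathbb{C}\setminus\mathbb{R}$ we have $\mathfrak{gl}_{t;N}(z)\neq 0$ for all $t$. Indeed, if $z=a+\mathrm{i}b$ with $b\neq0$, then $\Im(1-\mathsf{x}z)=-\mathsf{x}b$ and $\Re(1-\mathsf{x}z)=1-\mathsf{x}a$ cannot both vanish unless $\mathsf{x}=0$, in which case the factor is $1$. The same holds for the limit $\mathfrak{gl}_t$, since its zeros are real. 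Hence $\psi_{t;N}(z)=\partial_z\log\mathfrak{gl}_{t;N}(z)=\partial_z\mathfrak{gl}_{t;N}/\mathfrak{gl}_{t;N}$ is well defined. Because everything is holomorphic in $z$ and the martingale $\mathsf{M}^N$ is a finite sum of stochastic integrals with smooth integrands, we may differentiate \eqref{phi^N-equ} in $z$ (commuting $\partial_z$ with the stochastic integral, e.g.\ via Cauchy's formula on a small circle avoiding $\mathbb{R}$).

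Step one is It\^o on $\log\mathfrak{gl}_{t;N}(z)$:
\[
\mathrm{d}\log\mathfrak{gl}=\frac{\mathrm{d}\mathfrak{gl}}{\mathfrak{gl}}-\frac{1}{2}\frac{\mathrm{d}\langle\mathsf{M}(z)\rangle}{\mathfrak{gl}^2}.
\]
Write $\phi=\partial_z\log\mathfrak{gl}$. Then $\partial_z^2\mathfrak{gl}/\mathfrak{gl}=\phi'+\phi^2$, so the drift from \eqref{phi^N-equ} divided by $\mathfrak{gl}$ equals $\tfrac{\theta}{2}z\phi-\tfrac{z^2}{2}(\phi'+\phi^2)$. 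The quadratic variation (the $z=w$ extension of \eqref{CovarGLfiniteN}) divided by $\mathfrak{gl}^2$ equals $z^2\big(\phi^2-(\phi'+\phi^2)\big)=-z^2\phi'$. The It\^o correction is therefore $+\tfrac{z^2}{2}\phi'$, which cancels the $-\tfrac{z^2}{2}\phi'$ term. We obtain
\[
\mathrm{d}\log\mathfrak{gl}=\mathrm{d}\tilde{\mathsf{m}}+\Big[\tfrac{\theta}{2}z\phi-\tfrac{z^2}{2}\phi^2\Big]\mathrm{d}t,\qquad \mathrm{d}\tilde{\mathsf{m}}_t(z)=\mathrm{d}\mathsf{M}_t(z)/\mathfrak{gl}_t(z).
\]
Applying $\partial_z$ gives exactly the drift in \eqref{psiEqu}, with $\mathsf{m}=\partial_z\tilde{\mathsf{m}}$. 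For the covariation, \eqref{CovarGLfiniteN} divided by $\mathfrak{gl}(z)\mathfrak{gl}(w)$ gives
\[
\mathrm{d}\langle\tilde{\mathsf{m}}(z),\tilde{\mathsf{m}}(w)\rangle=\frac{zw}{z-w}\big(\phi(w)-\phi(z)\big)\mathrm{d}t=-zw\frac{\phi(z)-\phi(w)}{z-w}\mathrm{d}t.
\]
Applying $\partial_z\partial_w$ yields \eqref{d(m(z),m(z'))}. This last step uses that the covariation of $z$-derivatives of such holomorphic martingale families is the mixed derivative of the covariation; at finite $N$ this is immediate from the explicit sum $\mathsf{M}^N=\sum_i(\cdots)\mathrm{d}\mathsf{w}_i$ with holomorphic integrands.

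Step two is passage to $N\to\infty$. There are two options. Either we redo the computation directly in the limit using Proposition \ref{PropEntireEvolGL}, with $\mathsf{M}$ an $\mathsf{H}(\mathbb{C})$-valued continuous local martingale. Or, preferably, we use the Skorokhod coupling from the proof of Proposition \ref{PropEntireEvolGL}: $\mathfrak{gl}_{\cdot;N}\to\mathfrak{gl}_\cdot$ in $\mathrm{C}(\mathbb{R}_+;\mathsf{H}(\mathbb{C}))$, and since $|\mathfrak{gl}_t(z)|$ is bounded below on compacts of $[0,T]\times K$ for $K\subset\mathbb{C}\setminus\mathbb{R}$ compact (continuity and nonvanishing), $\psi_{\cdot;N}\to\psi^{GL}_\cdot$ locally uniformly there together with all $z$-derivatives. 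We then solve \eqref{psiEqu} at level $N$ for $\mathsf{m}^N$, conclude that it converges locally uniformly, and note that the limit of continuous local martingales with converging covariation processes is a continuous local martingale (with respect to the limiting filtration) with the limiting covariation. Finally, at the limit we identify $\psi_t^{GL}$ with the series \eqref{psi} by differentiating the Hadamard product of $\mathfrak{E}^+$.

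The main obstacle is the rigorous limiting martingale claim. Being a local martingale and having the prescribed bracket are both preserved under almost sure locally uniform convergence provided we localise with stopping times that keep $\mathfrak{gl}$ and $\mathfrak{gl}_{\cdot;N}$ bounded away from $0$ and bounded on a fixed compact, which gives uniform integrability. This localisation argument is of the same type as the one already implicitly used in Proposition \ref{PropEntireEvolGL}.
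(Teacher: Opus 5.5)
Your proposal is correct and is essentially the paper's argument. You apply It\^o to $\log\mathfrak{gl}_t(z)$ so that the correction $+\tfrac{z^2}{2}\partial_z\psi$ cancels part of the drift, then apply $\partial_z$, and obtain the covariation by dividing that of $\mathsf{M}$ by $\mathfrak{gl}(z)\mathfrak{gl}(w)$ and applying $\partial_z\partial_w$. The paper runs this computation directly on the limiting equation from Proposition \ref{PropEntireEvolGL}, and notes your preferred finite-$N$-then-limit route as an equivalent alternative.
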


\begin{proof}
From Proposition \ref{PropEntireEvolGL} we can calculate for $z\in \mathbb{C}\backslash \mathbb{R}$,
\begin{align}\label{dlog phi}
    \mathrm{d}
\log\mathfrak{gl}_t(z)
&=
 \mathrm{d}\widetilde{\mathsf{M}}_t(z)
+
    \frac{\theta}{2}z\partial_z
    \log
    \mathfrak{gl}_t(z)\mathrm{d}t
  -
    \frac{z^2}{2}
    \frac{\partial_{z}^2\mathfrak{gl}_t(z)}{\mathfrak{gl}_t(z)}\mathrm{d}t
   \nonumber
   \\
  & \  \ -\frac{1}{2\mathfrak{gl}_t^2(z))}
    z^2\left(\left(\partial_z\mathfrak{gl}_t(z)\right)^2
-
\mathfrak{gl}_t(z)
\partial_{z}^2\mathfrak{gl}_t(z)
\right)\mathrm{d}t
\nonumber
\\
&=
\mathrm{d}\widetilde{\mathsf{M}}_t(z)
+
    \frac{\theta}{2}z\partial_z
    \log
    \mathfrak{gl}_t(z)\mathrm{d}t
-
    \frac{z^2}{2}\frac{\partial_{z}^2\mathfrak{gl}_t(z)}{\mathfrak{gl}_t(z)}\mathrm{d}t
   -\frac{z^2}{2}
    \left(\partial_z\log
    \mathfrak{gl}_t(z)\right)^2\mathrm{d}t
    \nonumber
    \\
    &=
    \mathrm{d}\widetilde{\mathsf{M}}_t(z)
    +\frac{\theta}{2}z\mathfrak{gl}_t(z)\mathrm{d}t
    -
    \frac{z^2}{2}\left(\mathfrak{gl}_t(z)\right)^2\mathrm{d}t,
\end{align}
where we have defined the local martingale term $\widetilde{\mathsf{M}}_t(z)$ by (note that this is well-defined as $(\mathfrak{gl}_t)_{t\ge 0}$ has only real zeros)
\begin{align*}
\widetilde{\mathsf{M}}_t(z)
&=
\int_0^t\frac{\mathrm{d}\mathsf{M}_s(z;\mathfrak{gl})}{\mathfrak{gl}_{s}(z)}.
\end{align*}
Taking the derivative in $z$, one then obtains \eqref{psiEqu} with 
\begin{align}\label{dm_t(z)}
\mathsf{m}_t\left(z\right)
&=\int_0^t\partial_z\mathrm{d}\widetilde{\mathsf{M}}_s(z).
\end{align}
The identity \eqref{d(m(z),m(z'))} for the quadratic covariation 
can then be derived, in view of \eqref{dm_t(z)}, from the corresponding expression for $\mathsf{M}_t(z;\mathfrak{gl})$.
Alternatively, one could 
take the limit $N\to\infty$ of the expressions in the finite $N$ case, by first defining $\psi_{t;N}^{GL}(z)$ as
\begin{align*}
     \psi_{t;N}^{GL}(z)
    \overset{\textnormal{def}}{=}
\partial_z
\log\mathfrak{gl}_{t;N}(z)
=
\sum_{i=1}^N
\frac{-\mathsf{x}_i^{(N)}(t)}{1-\mathsf{x}_i^{(N)}(t)z},
\end{align*}
and  noting that by virtue of $\mathfrak{gl}_{\bullet;N} \to \mathfrak{gl}_{\bullet}$ in $\mathrm{C}(\mathbb{R}_+;\mathfrak{LP}_+)$ we have that $ \psi_{\bullet;N}^{GL}\to\psi_\bullet^{GL}$ uniformly in time and compact sets in $\mathbb{C}\backslash \mathbb{R}$.
\end{proof}

We can now derive the equations, or more precisely a penultimate version in \eqref{PenultimateGLisde}, for the $\mathsf{x}_i$ via the equation for $\psi_t^{GL}$. This equation can alternatively be obtained in a different way by following the scheme of proof in Section 4 of \cite{AM24}.

\begin{prop}\label{PropISDEGL}
For any $\gamma\ge \sum_{i=1}^\infty x_i$, whenever $(x_i)_{i\in \mathbb{N}}\in \mathbf{W}_{\infty;+}^\circ$, the projection of $\mathbf{X}_{GL}^\bullet$ on the $(\mathsf{x}_i(\bullet))_{i\in \mathbb{N}}$ solves the infinite system of SDE
\begin{equation*}
\mathsf{x}_i(t)= x_i+\int_0^t\mathsf{x}_i(s)\mathrm{d}\mathsf{w}_i(s) + \frac{\theta}{2}\int_0^t\mathsf{x}_i(s)\mathrm{d}s
 +\int_0^t\sum_{j=1,j\neq i}^{\infty}\frac{\mathsf{x}_i(s)\mathsf{x}_j(s)}{\mathsf{x}_i(s)-\mathsf{x}_j(s)}\mathrm{d}s, \  \forall t\ge 0 , \   i \in \mathbb{N},
 \end{equation*}
with independent standard Brownian motions $(\mathsf{w}_i)_{i\in \mathbb{N}}$.
\end{prop}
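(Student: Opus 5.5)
The plan follows the scheme \eqref{Scheme}: start from the equation \eqref{psiEqu} for $\psi_t^{GL}$ in Proposition \ref{PropStieltjesGL} and extract each $\mathsf{x}_i$ by residue calculus. Non-collision (Proposition \ref{NonCollisionProp}) lets us isolate individual poles. Fix $i\in\mathbb{N}$, a time horizon $T$, and a stopping time $\tau$ up to which $\mathsf{x}_{i-1}(t)>\mathsf{x}_i(t)+\epsilon$, $\mathsf{x}_i(t)>\mathsf{x}_{i+1}(t)+\epsilon$, and $\mathsf{x}_i(t)>\epsilon$. On $[0,\tau]$ we choose (piecewise in time, using continuity of the paths) a fixed small circle $\mathcal{C}$ around the point $\mathsf{x}_i(t)^{-1}$ that encloses no other pole $\mathsf{x}_j(t)^{-1}$. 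Since $\psi_t^{GL}(z)=\sum_j \frac{-\mathsf{x}_j}{1-\mathsf{x}_j z}+\text{const}$ and $\frac{-\mathsf{x}_j}{1-\mathsf{x}_jz}=\frac{1}{z-\mathsf{x}_j^{-1}}$, the residue of $\psi_t^{GL}$ at $\mathsf{x}_i^{-1}$ is $1$, and the residue of $z\psi_t^{GL}(z)$ is $\mathsf{x}_i^{-1}$. The clean quantity to track is therefore the reciprocal zero $\mathfrak{z}_i(t)=\mathsf{x}_i(t)^{-1}=\frac{1}{2\pi \mathrm{i}}\oint_{\mathcal{C}} z\psi_t^{GL}(z)\,\mathrm{d}z$. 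Afterwards we apply It\^o's formula to $\mathsf{x}_i=1/\mathfrak{z}_i$.

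For the drift, integrate \eqref{psiEqu} against $z$ over $\mathcal{C}$, using stochastic Fubini (justified by continuity of the martingale field in $z$ from Proposition \ref{PropEntireEvolGL}). The drift contribution is $\frac{1}{2\pi\mathrm{i}}\oint z\,\partial_z[\frac{\theta}{2}z\psi-\frac12 z^2\psi^2]\,\mathrm{d}z$. Integrating by parts gives $-\frac{1}{2\pi\mathrm{i}}\oint[\frac{\theta}{2}z\psi-\frac12 z^2\psi^2]\,\mathrm{d}z$. We compute both pieces from local expansions.
\begin{itemize}
\item The first piece gives $-\frac{\theta}{2}\mathfrak{z}_i$.
\item For the second, write $\psi=\frac{1}{z-\mathfrak{z}_i}+R_i(z)$, where $R_i$ is analytic near $\mathfrak{z}_i$. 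The residue of $z^2\psi^2$ is $2\mathfrak{z}_i+2\mathfrak{z}_i^2R_i(\mathfrak{z}_i)$.
\end{itemize}
Here $R_i(\mathfrak{z}_i)=\sum_{j\ne i}\frac{1}{\mathfrak{z}_i-\mathfrak{z}_j}+\sum_j\mathsf{x}_j-\boldsymbol{\gamma}$. The constant $\sum_j\mathsf{x}_j-\boldsymbol{\gamma}$ is puzzling at first, but it must cancel against the martingale It\^o correction below, or else be absorbed. Indeed, in finite $N$ the constant is zero, and passing to $\mathsf{x}_i$-coordinates the drift $\sum_{j\neq i}\frac{\mathsf{x}_i\mathsf{x}_j}{\mathsf{x}_i-\mathsf{x}_j}$ carries no $\boldsymbol{\gamma}$, so the $\boldsymbol{\gamma}$-dependence should indeed drop out. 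I would double check this: terms in $R_i$ constant in $z$ contribute to $\oint z^2\psi^2$ via the residue $2\mathfrak{z}_i^2 c$. This is a point to verify carefully against the finite-$N$ computation, which is the sanity check throughout.

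For the martingale part, $\mathrm{d}\mathfrak{z}_i=\frac{1}{2\pi\mathrm{i}}\oint z\,\mathrm{d}\mathsf{m}_t(z)\mathrm{d}z+\text{drift}$. Its quadratic variation is a double contour integral of \eqref{d(m(z),m(z'))}, which after two integrations by parts becomes $\frac{1}{(2\pi\mathrm{i})^2}\oint\oint zw\frac{\psi(z)-\psi(w)}{z-w}\,\mathrm{d}z\,\mathrm{d}w$. Evaluating residues, only the pole structure of $\psi$ at $\mathfrak{z}_i$ contributes, giving $\mathfrak{z}_i^2\,\mathrm{d}t$. Cross-variations between distinct $i\neq k$ vanish, because the two contours enclose different poles and the kernel has no joint singularity. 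By L\'evy's characterisation, the martingales $\int \mathfrak{z}_i^{-1}\mathrm{d}(\text{mart}_i)$ are then independent Brownian motions $-\mathsf{w}_i$. Applying It\^o to $\mathsf{x}_i=1/\mathfrak{z}_i$ adds the correction $\mathfrak{z}_i^{-3}\mathfrak{z}_i^2\,\mathrm{d}t=\mathsf{x}_i\,\mathrm{d}t$. Combining this with the drift above and the identity $\frac{\mathsf{x}_i^2}{\mathfrak{z}_i-\mathfrak{z}_j}\cdot(-1)\cdot\ldots=\frac{\mathsf{x}_i\mathsf{x}_j}{\mathsf{x}_i-\mathsf{x}_j}-\mathsf{x}_i$-type algebra (summed over $j$) should produce exactly \eqref{ISDEGL}. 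Here the constant terms $\mathsf{x}_i\sum_j\mathsf{x}_j$ and those involving $\boldsymbol{\gamma}$ must cancel; matching against the finite-$N$ It\^o computation of Proposition \ref{PropGLfiniteCharPolyEq} pins down all signs.

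The identity first holds up to $\tau$. Letting $\epsilon\to0$ extends it to all $t$, because by Proposition \ref{NonCollisionProp} the paths remain strictly ordered and positive for all time, and positivity follows since $\mathsf{x}_i>\mathsf{x}_{i+1}\ge0$. The sum $\sum_{j\ne i}\frac{\mathsf{x}_i\mathsf{x}_j}{\mathsf{x}_i-\mathsf{x}_j}$ converges absolutely, locally uniformly in time, since $\sum_j\mathsf{x}_j\le\boldsymbol{\gamma}<\infty$ and $\mathsf{x}_j\to0$.

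The main obstacle I expect is the bookkeeping in the residue computation of the nonlinear term $z^2\psi^2$, specifically verifying that the non-singular part of $\psi$ (including $\sum_j\mathsf{x}_j-\boldsymbol{\gamma}$) and the It\^o correction cancel exactly. A second, more technical issue is justifying stochastic Fubini and the contour exchange near the pole with time-moving contours. I would handle this by localising on short random time intervals where a fixed contour works, and concatenating.
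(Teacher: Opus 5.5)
Your residue computation coincides with the paper's, but the way you propose to dispose of the constant $c(t)=\sum_j\mathsf{x}_j(t)-\boldsymbol{\gamma}(t)$ in $R_i$ is wrong, and this leaves a genuine gap. Nothing cancels this constant. For $\mathcal{X}_i=1/\mathsf{x}_i$ the drift is $(1-\frac{\theta}{2})\mathcal{X}_i+\sum_{j\neq i}\frac{\mathcal{X}_i^2}{\mathcal{X}_i-\mathcal{X}_j}+c\,\mathcal{X}_i^2$. The It\^o correction $\mathsf{x}_i\,\mathrm{d}t$ only absorbs the $\mathcal{X}_i$ coming from the double pole of $z^2/(z-\mathcal{X}_i)^2$. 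What you actually get is
\begin{equation*}
\mathrm{d}\mathsf{x}_i(t)=\mathsf{x}_i(t)\mathrm{d}\mathsf{w}_i(t)+\frac{\theta}{2}\mathsf{x}_i(t)\mathrm{d}t+\sum_{j\neq i}\frac{\mathsf{x}_i(t)\mathsf{x}_j(t)}{\mathsf{x}_i(t)-\mathsf{x}_j(t)}\mathrm{d}t+\left(\boldsymbol{\gamma}(t)-\sum_{j=1}^\infty\mathsf{x}_j(t)\right)\mathrm{d}t,
\end{equation*}
that is, an extra nonnegative drift, the same for every $i$. Comparing with finite $N$ cannot detect this term, since there the constant is identically zero; it is a purely infinite-dimensional effect. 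Arguing that the target drift contains no $\boldsymbol{\gamma}$ is circular. Note also that the statement allows $\gamma>\sum_i x_i$, in which case this term is strictly positive at $t=0$.

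The missing ingredient is a separate dynamical argument showing that $\int_0^t(\boldsymbol{\gamma}(s)-\sum_j\mathsf{x}_j(s))\,\mathrm{d}s=0$ for all $t$, almost surely. The paper imports this from Section 4 of \cite{AM24}, and the same mechanism is used for $\boldsymbol{\delta}$ in the proof of Proposition \ref{HPISDE}. Sum the equation above over $i\le K$. The interactions with $i,j\le K$ cancel by antisymmetry, and the cross terms with $i\le K<j$ are nonnegative. The quantity $\sum_{i\le K}\mathsf{x}_i(t)\le\boldsymbol{\gamma}(t)$, the $\theta$-term, and the martingale part (quadratic variation at most $\int_0^t\boldsymbol{\gamma}(s)^2\,\mathrm{d}s$) stay bounded in $K$, at least along a subsequence. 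Meanwhile the extra drift contributes $K\int_0^t(\boldsymbol{\gamma}(s)-\sum_j\mathsf{x}_j(s))\,\mathrm{d}s$. Dividing by $K$ and letting $K\to\infty$ forces this nonnegative integral to vanish. Without this step your argument only establishes the equation with the additional drift.
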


\begin{proof}
Let us define, by virtue of Proposition \ref{NonCollisionProp}, 
\begin{align*}
\mathcal{X}_i(t)\overset{\textnormal{def}}{=}\frac{1}{\mathsf{x}_i(t)}, \  \  t\ge 0,\  \ i\in\mathbb{N}.
\end{align*}
First, observe that from \eqref{psi}  we have
\begin{align}\label{psiInvPoints-IL}
    \psi_t^{GL}(z)
=
\sum_{j=1}^\infty
\frac{1}{z-\mathcal{X}_j(t)}
+ 
\sum_{j=1}^\infty
\frac{1}{\mathcal{X}_j(t)}
-
\boldsymbol{\gamma}(t),
\end{align}
and thus, $\mathcal{X}_i(t)$'s are the poles of $\psi_t^{GL}(z)$.

Let $t\ge 0$ be arbitrary. Now, suppose
$\mathscr{C}_t^i$ is a contour enclosing 
$\mathcal{X}_i(t)$
but not 
$\mathcal{X}_j(t)$ for $i\neq j$.  We note that such a (time-evolving) contour can be taken piecewise constant in time, in the sense that it is always possible to split any interval $[0,t)$ into a finite number of sub-intervals $[t_\ell^{(i)},t_{\ell+1}^{(i)})$ so that $\mathscr{C}_t^{i}\equiv\mathscr{C}^{i;\ell}$ on $t \in [t_\ell^{(i)},t_{\ell+1}^{(i)})$. This is by virtue of the continuity and non-intersection property of the $\mathcal{X}_i$'s, see Proposition \ref{NonCollisionProp}. 

Then, by using Cauchy's residue formula, emulating the type of computations from \cite{HuangZhang}, we can write, in the differential form of the equation, 
\begin{align*}
 \mathrm{d}\mathcal{X}_i(t)&=
 \frac{1}{2\pi\textnormal{i}}
 \oint_{\mathscr{C}_t^i} 
    z\mathrm{d}\psi^{GL}_t(z)\mathrm{d}z
    \\&=
    \frac{1}{2\pi\textnormal{i}}\oint_{\mathscr{C}_t^i}
    z\mathrm{d}\mathsf{m}_t(z)\mathrm{d}z
    +
    \frac{1}{2\pi\textnormal{i}}
    \oint_{\mathscr{C}_t^i}\left(
    \frac{\theta}{2}z\partial_z\left(z\psi^{GL}_t(z)\right)
    -
   \frac{1}{2}
    z\partial_z
    \left(z^2\left(\psi^{GL}_t(z)\right)^2\right)\right)
    \mathrm{d}t\mathrm{d}z.
\end{align*}
Elementary manipulations give the following:
\begin{align*}
\frac{1}{2\pi\textnormal{i}}
    \oint_{\mathscr{C}_t^i}
z\partial_z\left(z\psi^{GL}_t(z)\right)\mathrm{d}z&
=
-\frac{1}{2\pi\textnormal{i}}
    \oint_{\mathscr{C}_t^i}
z\psi^{GL}_t(z)\mathrm{d}z
=
-\mathcal{X}_i(t),
   \\
   \frac{1}{2\pi\textnormal{i}}\oint_{\mathscr{C}_t^i} 
    z\partial_z
\left(z^2\left(\psi^{GL}_t(z)\right)^2\right)\mathrm{d}z
    &=
    -\frac{1}{2\pi\textnormal{i}}\oint_{\mathscr{C}_t^i} 
    z^2\left(\psi^{GL}_t(z)\right)^2
    \mathrm{d}z
    \\
    &\hspace{-3cm}=
    -\frac{1}{2\pi\textnormal{i}}\oint_{\mathscr{C}_t^i} 
    z^2\left[
    \frac{1}{\left(z-\mathcal{X}_i(t)\right)^2}
    +
   \frac{2}{z-\mathcal{X}_i(t)}
    \left(
    \sum_{j=1,j\neq i}^\infty
\frac{1}{z-\mathcal{X}_j(t)}
+ 
\sum_{j=1}^\infty
\frac{1}{\mathcal{X}_j(t)}
-
\boldsymbol{\gamma}(t)
    \right)
    \right]\mathrm{d}z
    \\
    &\hspace{-3cm}=
    -2\left[\mathcal{X}_i(t)
    +
    \sum_{j\neq i}\frac{\left(\mathcal{X}_i(t)\right)^2}{\mathcal{X}_i(t)-\mathcal{X}_j(t)}
    +\left(\mathcal{X}_i(t)\right)^2\left(\sum_{j=1}^\infty\frac{1}{\mathcal{X}_j(t)}
    -
    \boldsymbol{\gamma}(t)\right)\right].
\end{align*}
To characterize the local martingale term, let us define (abusing notation as these are not the same Brownian motions from before)
\begin{align}\label{tilde-w def-IL}
   \mathrm{d} \mathsf{w}_i(t)
    \overset{\textnormal{def}}{=}
    \frac{1}{2\pi\textnormal{i}\mathcal{X}_i(t)}\oint_{\mathscr{C}_t^i}
    z\mathrm{d}\mathsf{m}_t(z)\mathrm{d}z.
\end{align}
Note that,
$\left(\mathsf{w}_i(\cdot)\right)_{i\in\mathbb{N}}$
is a sequence of continuous local martingales adapted to the natural filtration of the process $(\mathbf{X}_{GL}^t)_{t\ge 0}$.
We claim, and prove shortly below, that the
$\mathsf{w}_i$'s are in fact independent 
standard Brownian motions. Then, putting everything together, it follows that, for $ i\in\mathbb{N}$,
\begin{align}\label{mathcalX-ISDE}
 \mathrm{d}\mathcal{X}_i(t)
 &=
 \mathcal{X}_i(t)\mathrm{d}\mathsf{w}_i(t)
 +
 \left(1-\frac{\theta}{2}\right)\mathcal{X}_i(t)
 \mathrm{d}t
   +
   \left[
    \sum_{j\neq i}\frac{\left(\mathcal{X}_i(t)\right)^2}{\mathcal{X}_i(t)-\mathcal{X}_j(t)}
    +\left(\mathcal{X}_i(t)\right)^2\left(\sum_{j=1}^\infty\frac{1}{\mathcal{X}_j(t)}
    -
    \boldsymbol{\gamma}(t)\right)\right]\mathrm{d}t.
\end{align}
We now prove the claim on the $\left(\mathsf{w}_i(\cdot)\right)_{i\in\mathbb{N}}$.
Using \eqref{d(m(z),m(z'))} we can compute,
\begin{align*}
    \frac{\mathrm{d}}{\mathrm{d}t}
    \langle
    \mathsf{w}_i(t),
    \mathsf{w}_j(t)
   \rangle
    &=
    \frac{-1}{(2\pi\textnormal{i})^2\mathcal{X}_i(t)
    \mathcal{X}_j(t)}\oiint_{\mathscr{C}_t^i
    \times 
    \mathscr{C}_t^j}
    zw\partial_z\partial_w \left(zw\frac{\psi^{GL}_t(z)-\psi^{GL}_t(w)}{z-w}\right)\mathrm{d}z\mathrm{d}w
    \\
     &=
    \frac{-1}{(2\pi\textnormal{i})^2\mathcal{X}_i(t)
    \mathcal{X}_j(t)}\oiint_{\mathscr{C}_t^i
    \times 
    \mathscr{C}_t^j}
    zw\left(\frac{\psi^{GL}_t(z)-\psi^{GL}_t(w)}{z-w}\right)\mathrm{d}z\mathrm{d}w
    \\
     &=
    \frac{-1}{(2\pi\textnormal{i})^2\mathcal{X}_i(t)
    \mathcal{X}_j(t)}\oiint_{\mathscr{C}_t^i
    \times 
    \mathscr{C}_t^j}
    \frac{zw}{z-w}
    \sum_{k=1}^\infty
    \left(
    \frac{1}{z-\mathcal{X}_k(t)}
    -
    \frac{1}{w-\mathcal{X}_k(t)}
    \right)
    \mathrm{d}z\mathrm{d}w.
\end{align*}
Observe now that, for $i\neq j$,
\begin{align*}
\oiint_{\mathscr{C}_t^i
    \times 
    \mathscr{C}_t^j}
    \frac{zw}{z-w}
    &\sum_{k=1}^\infty
    \left(
    \frac{1}{z-\mathcal{X}_k(t)}
    -
    \frac{1}{w-\mathcal{X}_k(t)}
    \right)
    \mathrm{d}z\mathrm{d}w
     \\&=
     -
\oiint_{\mathscr{C}_t^i
    \times 
    \mathscr{C}_t^j}
    zw
    \left(
    \frac{1}{\left(z-\mathcal{X}_i(t)\right)\left(w-\mathcal{X}_i(t)\right)}
    +
   \frac{1}{\left(z-\mathcal{X}_j(t)\right)\left(w-\mathcal{X}_j(t)\right)}
    \right)
    \mathrm{d}z\mathrm{d}w=0,
\end{align*}
while on the other hand for $j=i$
\begin{align*}
\oiint_{\mathscr{C}_t^i
    \times 
    \mathscr{C}_t^i}
    \frac{zw}{z-w}
    \sum_{k=1}^\infty
    \left(
    \frac{1}{z-\mathcal{X}_k(t)}
    -
    \frac{1}{w-\mathcal{X}_k(t)}
    \right)
    \mathrm{d}z\mathrm{d}w
     &=
     -
\oiint_{\mathscr{C}_t^i
    \times 
    \mathscr{C}_t^i}
    \frac{zw}{\left(z-\mathcal{X}_i(t)\right)\left(w-\mathcal{X}_i(t)\right)}\mathrm{d}z\mathrm{d}w
    \\
    &=-(2\pi\textnormal{i})^2
    \mathcal{X}_i(t)^2.
\end{align*}
Hence, we obtain, for all $t\ge 0$, $i,j \in \mathbb{N}$,
\begin{align*}
    \langle
   \mathsf{w}_i(t),
    \mathsf{w}_j(t)
    \rangle=\mathbf{1}_{i=j}t,
\end{align*}
which concludes the proof of the claim by Levy's chararacterisation of Brownian motion \cite{Kallenberg}. Finally,  we can apply It\^{o}'s  formula to \eqref{mathcalX-ISDE} to obtain 
\begin{align}\label{PenultimateGLisde}
     \mathrm{d}\mathsf{x}_i(t)= \mathsf{x}_i(t)\mathrm{d}\mathsf{w}_i(t) + \frac{\theta}{2}\mathsf{x}_i(t)\mathrm{d}t
 +
 \left(\boldsymbol{\gamma}(t)-\sum_{j=1}^\infty\mathsf{x}_j(t) \right)\mathrm{d}t
 +\sum_{j=1,j\neq i}^{\infty}\frac{\mathsf{x}_i(t)\mathsf{x}_j(t)}{\mathsf{x}_i(t)-\mathsf{x}_j(t)}\mathrm{d}t, \  \ i\in\mathbb{N}.
\end{align}
The equation above is still different from the desired equation 
    by the additional drift term
    $ \boldsymbol{\gamma}(t)-\sum_{j=1}^{\infty}\mathsf{x}_j(t)$. However, one can follow the exact same argument given in Section 4 of \cite{AM24}  to show that, almost surely,
 \begin{align*}
\boldsymbol{\gamma}(t)=\sum_{i=1}^{\infty}\mathsf{x}_i(t), \ \  \forall t>0,
 \end{align*}
 yielding the desired SDE \eqref{ISDEGL} for the $\mathsf{x}_i$'s.
\end{proof}

\subsection{Proof of Theorem \ref{ThmMainGLN}}

We now complete the proof of Theorem \ref{ThmMainGLN}.

\begin{proof}[Proof of Theorem \ref{ThmMainGLN}]
All statements of the theorem have already been proven in Propositions \ref{PropConvergenceOnPathSpace}, \ref{PropGibbsPropertyGL} and \ref{PropISDEGL} except the final one. For this, first observe that for functions in $\mathfrak{LP}_+$ we have:
\begin{equation*}
\left|\mathfrak{E}_{\boldsymbol{v}}^+(z)\right|\le \mathrm{e}^{3\gamma|z|}, \ \ \forall z \in \mathbb{C}.
\end{equation*}
Hence, it suffices to show $\boldsymbol{\gamma}(t) \overset{t\longrightarrow \infty}{\longrightarrow} 0$ almost surely. Observe that, $\boldsymbol{\gamma}(t)=-\partial_z\mathfrak{gl}_t(z)|_{z=0}$. Then, using the equation for $(\mathfrak{gl}_t)_{t\ge0}$ we obtain,
\begin{equation*}
\mathrm{d}\boldsymbol{\gamma}(t) =\mathrm{d}\mathcal{M}(t)+\frac{\theta}{2}\boldsymbol{\gamma}(t)\mathrm{d}t
\end{equation*}
where $\mathcal{M}(t)$ is a continuous local martingale (whose covariation we do not need). Hence, using an integrating factor we can integrate this equation to obtain,
\begin{equation*}
\mathrm{e}^{-t\theta/2}\boldsymbol{\gamma}(t)=\boldsymbol{\gamma}(0)+\int_0^t\mathrm{e}^{s\theta/2}\mathrm{d}\mathcal{M}(s), \ \forall t\ge 0.
\end{equation*}
Now, the right hand side is a non-negative (since $\boldsymbol{\gamma}$ is non-negative) continuous local martingale and thus a non-negative supermartingale. By the supermartingale convergence theorem it converges almost surely to a finite limit. Hence, for $\theta<0$, as $t \longrightarrow \infty$, $\boldsymbol{\gamma}(t) \longrightarrow 0$ almost surely which completes the proof.
\end{proof}

\section{Dynamical models for stochastic zeta functions}

\subsection{Proof of results on the Rider-Valk\'{o} model}

\begin{proof}[Proof of statements on $\mathfrak{rv}_t$ from Theorem \ref{ThmMainStat}]  In \cite{AM24} we proved that there exists a Feller diffusion process $(\mathbf{X}_{RV}^t)_{t\ge 0}$ on $\Upsilon_+$ such that whenever, 
\begin{equation*}
\left((N^{-1}\mathsf{z}_i^{(N)}(0))_{i\in \mathbb{N}},N^{-1}\sum_{i=1}^\infty \mathsf{z}_i^{(N)}(0)\right)  \overset{N \to \infty}{\longrightarrow} \boldsymbol{v}=((x_i)_{i\in \mathbb{N}},\gamma)\in \Upsilon_+,
\end{equation*}
we have, as $N \to \infty$,
\begin{equation*}
\left((N^{-1}\mathsf{z}_i^{(N)}(\bullet))_{i\in \mathbb{N}},N^{-1}\sum_{i=1}^\infty \mathsf{z}_i^{(N)}(\bullet)\right) \overset{\mathrm{d}}{\longrightarrow} \left((\mathsf{z}_i(\bullet))_{i\in \mathbb{N}},\boldsymbol{\gamma}_{RV}(\bullet)\right)\overset{\mathrm{def}}{=} \mathbf{X}_{RV}^\bullet
\end{equation*}
in $\mathrm{C}(\mathbb{R}_+;\Upsilon_+)$
where $\mathbf{X}_{RV}^0=\boldsymbol{v}$. From this, the fact that if $\mathfrak{rv}_{0;N} \overset{N \to \infty}{\longrightarrow} \mathfrak{E}_{\boldsymbol{v}}^+$ in $\mathfrak{LP}_+$, then
\begin{equation*}
\mathfrak{rv}_{\bullet;N}\overset{\mathrm{d}}{\longrightarrow} \mathfrak{rv}_\bullet,
\end{equation*}
in $\mathrm{C}(\mathbb{R}_+;\mathfrak{LP}_+)$ and  $(\mathfrak{rv}_t)_{t\ge 0}$ is a Markov process follows immediately under the homeomorphism between $\mathrm{C}(\mathbb{R}_+;\Upsilon_+)$ and $\mathrm{C}(\mathbb{R}_+;\mathfrak{LP}_+)$. The proof of the dynamics \eqref{RVspde} of $\mathfrak{rv}_t$ follows word-for-word the computations for $\mathfrak{gl}_t$. The only additional drift term for $\mathfrak{rv}_{t;N}$ comes from the constant term in the SDE \eqref{RVmodel} which becomes
\begin{equation*}
-\frac{z}{2N}\sum_{i=1}^N\frac{\mathfrak{rv}_{t;N}\left(z\right)}{1-\frac{\mathsf{z}_i^{(N)}(t)z}{N}}
=-\frac{z}{2}\mathfrak{rv}_{t;N}(z)
    +
\frac{z^2}{2N}\partial_z\mathfrak{rv}_{t;N}(z), 
\end{equation*}
and in the limit gives the additional drift $-z\mathfrak{rv}_{t}(z)/2\times\mathrm{d}t$ in \eqref{RVspde}.
Regarding convergence to equilibrium, we have from \cite{AM24} that, for $\nu>-1$, as $t\to \infty$, 
\begin{equation*}
\mathbf{X}_{RV}^t \overset{\mathrm{d}}{\longrightarrow} \mathbf{Z}_{Bes;\nu}
\end{equation*}
where the law of $\mathbf{Z}_{Bes;\nu}$ on $\Upsilon_+$ is concentrated on $\Upsilon_+^\star$, given by 
\begin{equation*}
\Upsilon_+^\star=\left\{\boldsymbol{v}=(\mathbf{x},\gamma)\in \Upsilon_+: \gamma=\sum_{i=1}^\infty x_i\right\}
\end{equation*}
and the law of the $\{x_i\}$ is given by $\mathcal{IB}_\nu$ (note that this completely characterises the distribution of $\mathbf{Z}_{Bes;\nu}$).
Since the map $\boldsymbol{v}\mapsto \mathfrak{E}_{\boldsymbol{v}}^+$ is continuous the convergence statement \eqref{ConvToEqIB} follows from the continuous mapping theorem and observing that the pushforward of the law of $\mathbf{Z}_{Bes;\nu}$ under this map is exactly the law of $\mathbb{B}_{\nu}$.
\end{proof}

\subsection{The limiting entire function of the dynamical Cauchy model}

We begin with the following result on the  dynamics of the finite-$N$ reverse characteristic polynomial.

\begin{prop}\label{PropHPcharpolySDE}
    For any $N\in\mathbb{N}$ and $\mathfrak{s}\in\mathbb{C}$,
    $\mathfrak{hp}_{t;N}(z)$ satisfies the following equation
\begin{align}
\mathrm{d}\mathfrak{hp}_{t;N}(z)
    =
\mathrm{d}\mathsf{M}^N_t(z;\mathfrak{hp})
       +\mathfrak{B}^{(N)}_z\mathfrak{hp}_{t;N}(z)\mathrm{d}t,
\end{align}
with the linear operator $\mathfrak{B}^{(N)}_z$ acting on $f\in \mathsf{H}(\mathbb{C})$ as follows
\begin{align}
\mathfrak{B}^{(N)}_zf(z)&=-z^2\partial_z^2f(z)-2z\Re(\mathfrak{s})\partial_zf(z)-2z\Im(\mathfrak{s})f(z)-z^2f(z)\nonumber\\&+\frac{1}{N}\left[-z^2f(z)+\left(2z^2\Im(\mathfrak{s})+2z^3-\frac{2z^3}{N}\right)\partial_zf(z)-\frac{1}{N}\partial_z^2f(z)\right].
\end{align}
and moreover where $\mathsf{M}^N\in \mathrm{C}(\mathbb{R}_+;\mathsf{H}(\mathbb{C}))$ and for all $z\in \mathbb{C}$, $t\mapsto \mathsf{M}_t^N(z;\mathfrak{hp})$ are continuous local martingales with covariation, for $z,w \in \mathbb{C}$,
\begin{align}\label{HPQuadraticCovarFiniteN}
\langle \mathsf{M}_t^N(z;\mathfrak{hp}),\mathsf{M}_t^N(w;\mathfrak{hp})\rangle=\frac{2zw}{z-w}\int_0^t\left(\mathfrak{hp}_{s;N}(z)\partial_w\mathfrak{hp}_{s;N}(w)-\mathfrak{hp}_{s;N}(w)\partial_z\mathfrak{hp}_{s;N}(z)+\mathsf{E}_{s;N}(z,w)\right)\mathrm{d}s,
\end{align}
and the term $\mathsf{E}_{s;N}(z,w)$, with $s\ge 0$, $z,w\in \mathbb{C}$, is given by,
\begin{equation}\label{ErrorCovariance}
\mathsf{E}_{s;N}(z,w)=\frac{2zw}{N^2}\sum_{i=1}^N \frac{\mathfrak{hp}_{s;N}(z)\mathfrak{hp}_{s;N}(w)}{\left(1-\frac{\mathsf{y}_i^{(N)}(s)z}{N}\right)\left(1-\frac{\mathsf{y}_i^{(N)}(s)w}{N}\right)}.
\end{equation}
\end{prop}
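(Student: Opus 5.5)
The plan is to repeat the It\^o computation of Proposition \ref{PropGLfiniteCharPolyEq} for the Hua-Pickrell diffusion \eqref{Hua-PickrellIntro}, keeping track of the lower-order terms that were absent there. We set $\mathsf{u}_i(t)=N^{-1}\mathsf{y}_i^{(N)}(t)$, so that $\mathfrak{hp}_{t;N}(z)=\prod_{i=1}^N(1-\mathsf{u}_i(t)z)$, and abbreviate $a_i=a_i(z)=(1-\mathsf{u}_iz)^{-1}$. Rescaling \eqref{Hua-PickrellIntro} gives
\begin{equation*}
\mathrm{d}\mathsf{u}_i=\sqrt{2\left(\mathsf{u}_i^2+N^{-2}\right)}\,\mathrm{d}\mathsf{w}_i+\left[-2\Re(\mathfrak{s})\mathsf{u}_i+\frac{2\Im(\mathfrak{s})}{N}+2\sum_{j\neq i}\frac{\mathsf{u}_i\mathsf{u}_j}{\mathsf{u}_i-\mathsf{u}_j}+\frac{2}{N^2}\sum_{j\neq i}\frac{1}{\mathsf{u}_i-\mathsf{u}_j}\right]\mathrm{d}t .
\end{equation*}
The polynomial is affine in each $\mathsf{u}_i$, and the Brownian motions $\mathsf{w}_i$ are independent. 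Hence It\^o's formula has no second-order contributions, and
\begin{equation*}
\mathrm{d}\mathfrak{hp}_{t;N}(z)=-z\,\mathfrak{hp}_{t;N}(z)\sum_{i=1}^N a_i\,\mathrm{d}\mathsf{u}_i .
\end{equation*}
The martingale part is $\mathrm{d}\mathsf{M}^N_t(z;\mathfrak{hp})=-z\,\mathfrak{hp}_{t;N}(z)\sum_i a_i\sqrt{2(\mathsf{u}_i^2+N^{-2})}\,\mathrm{d}\mathsf{w}_i$. It is entire in $z$ and continuous in $t$, so $\mathsf{M}^N\in\mathrm{C}(\mathbb{R}_+;\mathsf{H}(\mathbb{C}))$.

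\textbf{Covariation.} The bracket density is
\begin{equation*}
2zw\,\mathfrak{hp}_{t;N}(z)\,\mathfrak{hp}_{t;N}(w)\sum_i a_i(z)a_i(w)\left(\mathsf{u}_i^2+N^{-2}\right).
\end{equation*}
The $\mathsf{u}_i^2$ part is exactly the sum handled in Proposition \ref{PropGLfiniteCharPolyEq}. Using the partial fraction identity $\mathsf{u}_i^2a_i(z)a_i(w)=\frac{\mathsf{u}_i}{z-w}\left(a_i(z)-a_i(w)\right)$ together with $\partial_z\mathfrak{hp}_{t;N}=-\mathfrak{hp}_{t;N}\sum_i\mathsf{u}_ia_i$, this part equals $\frac{2zw}{z-w}(\mathfrak{hp}(z)\partial_w\mathfrak{hp}(w)-\mathfrak{hp}(w)\partial_z\mathfrak{hp}(z))$. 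The $N^{-2}$ part is by definition the term $\mathsf{E}_{s;N}$ of \eqref{ErrorCovariance}. The only thing to check here is the matching of the normalising constant in front of $\mathsf{E}_{s;N}$.

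\textbf{Drift.} We split the drift into four sums and reduce each to derivatives of $\mathfrak{hp}_{t;N}$ using two identities:
\begin{equation*}
\sum_i\mathsf{u}_ia_i=-\frac{\partial_z\mathfrak{hp}}{\mathfrak{hp}},\qquad \sum_{i\neq j}\mathsf{u}_i\mathsf{u}_ja_ia_j=\frac{\partial_z^2\mathfrak{hp}}{\mathfrak{hp}},
\end{equation*}
the latter as in \eqref{partial_zz phi}. We also use $a_i=1+z\mathsf{u}_ia_i$, which gives $\sum_ia_i=N-z\,\partial_z\mathfrak{hp}/\mathfrak{hp}$.
\begin{itemize}
\item The $\Re(\mathfrak{s})$ term gives $-2\Re(\mathfrak{s})z\,\partial_z\mathfrak{hp}$.
\item The $\Im(\mathfrak{s})/N$ term gives $-\frac{2\Im(\mathfrak{s})z}{N}\mathfrak{hp}\sum_ia_i=-2\Im(\mathfrak{s})z\,\mathfrak{hp}+\frac{2\Im(\mathfrak{s})z^2}{N}\partial_z\mathfrak{hp}$, as in the Rider-Valk\'o computation.
\item The log-interaction term is symmetrised exactly as in Proposition \ref{PropGLfiniteCharPolyEq}. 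It yields $-z^2\partial_z^2\mathfrak{hp}$, with coefficient $2\cdot\tfrac12$.
\item The new $N^{-2}$ interaction symmetrises to $\frac{z}{2}\sum_{i\neq j}a_ia_j=\frac{z}{2}\left[(\sum_ia_i)^2-\sum_ia_i^2\right]$.
\end{itemize}

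\textbf{Main obstacle.} The hard step is expressing $\sum_ia_i^2$ through $\mathfrak{hp}_{t;N}$ and its first two derivatives. We will first try $a_i^2=a_i+z\mathsf{u}_ia_i^2$ and $\partial_z a_i=\mathsf{u}_ia_i^2$. Combined with $(\sum_i\mathsf{u}_ia_i)^2=\sum_{i\neq j}\mathsf{u}_i\mathsf{u}_ja_ia_j+\sum_i\mathsf{u}_i^2a_i^2$, this should express $\sum_i a_i^2$ using $\partial_z^2\mathfrak{hp}/\mathfrak{hp}$ and $(\partial_z\mathfrak{hp}/\mathfrak{hp})^2$. The quadratic terms in $\partial_z\mathfrak{hp}/\mathfrak{hp}$ must then cancel between $(\sum_i a_i)^2$ and $\sum_ia_i^2$, so that the drift is linear in $\mathfrak{hp}$. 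The $N^2$ piece of $(\sum a_i)^2/N^2$ produces the $-z^2\mathfrak{hp}$ term, and the remaining pieces produce the $\frac1N[\cdots]$ corrections in $\mathfrak{B}^{(N)}_z$. The risk is in this bookkeeping: the powers of $z$ and $N$ in the correction terms must come out exactly as stated. As a final check, we will verify the whole identity on $N=1$, where $\mathfrak{hp}=1-\mathsf{u}z$ and the drift can be computed by hand.
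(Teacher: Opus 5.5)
Your route is the paper's: rescale to $\mathsf{u}_i=N^{-1}\mathsf{y}_i^{(N)}$, apply It\^o to the product (there are no second-order terms), use partial fractions for the bracket and symmetrisation for the drift. The only extra input in the paper is the identity
\[
2z\,\mathfrak{hp}_{t;N}(z)\sum_{i}a_i\sum_{j\neq i}\frac{1}{\mathsf{u}_i-\mathsf{u}_j}=z^2\left[N(N-1)\mathfrak{hp}_{t;N}(z)-2z(N-1)\partial_z\mathfrak{hp}_{t;N}(z)+z^2\partial_z^2\mathfrak{hp}_{t;N}(z)\right].
\]
You can get this in one line by summing $a_ia_j=(1+z\mathsf{u}_ia_i)(1+z\mathsf{u}_ja_j)$ over $i\neq j$, without going through $\sum_i a_i^2$. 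Your detour also works, and the quadratic terms in $\partial_z\mathfrak{hp}/\mathfrak{hp}$ do cancel.

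The step that fails is your expectation that the bookkeeping reproduces the statement ``exactly as stated''. It cannot, because the printed formulas contain misprints. Multiplying the identity above by $-N^{-2}$, the $N^{-2}$ interaction contributes
\[
-z^2\mathfrak{hp}_{t;N}+\frac{1}{N}\left[z^2\mathfrak{hp}_{t;N}+\left(2z^3-\frac{2z^3}{N}\right)\partial_z\mathfrak{hp}_{t;N}-\frac{z^4}{N}\partial_z^2\mathfrak{hp}_{t;N}\right].
\]
So inside the $\frac1N[\cdots]$ of $\mathfrak{B}^{(N)}_z$ the correct terms are $+z^2f$ and $-\frac{z^4}{N}\partial_z^2f$, not $-z^2f$ and $-\frac1N\partial_z^2 f$.

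Your $N=1$ test will expose the sign. Applied to $f=1-\mathsf{u}z$, the printed operator gives the true drift $2\Re(\mathfrak{s})\mathsf{u}z-2\Im(\mathfrak{s})z$ plus a spurious $-2z^2(1-\mathsf{u}z)$. The missing $z^4$ only becomes visible from $N=2$, where the $N^{-2}$ interaction contributes exactly $-z^2/2$.

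The bracket has the same problem. As you observed, the $N^{-2}$ part of the density is $\mathsf{E}_{s;N}(z,w)$ itself, so
\[
\langle \mathsf{M}_t^N(z;\mathfrak{hp}),\mathsf{M}_t^N(w;\mathfrak{hp})\rangle=\frac{2zw}{z-w}\int_0^t\left(\mathfrak{hp}_{s;N}(z)\partial_w\mathfrak{hp}_{s;N}(w)-\mathfrak{hp}_{s;N}(w)\partial_z\mathfrak{hp}_{s;N}(z)\right)\mathrm{d}s+\int_0^t\mathsf{E}_{s;N}(z,w)\,\mathrm{d}s,
\]
with $\mathsf{E}_{s;N}$ outside the prefactor. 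With $\mathsf{E}_{s;N}$ as defined, no choice of constant makes the printed version correct. Indeed, $\frac{2zw}{z-w}\mathsf{E}_{s;N}(z,w)$ has a pole on the diagonal, since $\mathsf{E}_{s;N}(z,z)\neq 0$ in general. By contrast, the bracket of a polynomial-valued martingale is a polynomial in $(z,w)$.

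All these corrections are $O(1/N)$ locally uniformly, given the paper's bound on $\mathsf{E}_{s;N}$, so the limiting equation \eqref{HPspde} in Theorem \ref{ThmMainStat} is unaffected. You should state and prove the corrected identities rather than try to force agreement with the printed ones.
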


\begin{proof}
Again it is more convenient to consider the rescaled coordinates $\hat{\mathsf{y}}_i^{(N)}(t)=N^{-1}\mathsf{y}_i^{(N)}(t)$ so that $\mathfrak{hp}_{t;N}(z)=\prod_{i=1}^N(1-\hat{\mathsf{y}}_i^{(N)}(t)z)$. Observe that, the $(\hat{\mathsf{y}}_i^{(N)})_{i=1}^N$ solve the  following SDE,
\begin{align}\label{Hua-Pickrell}
\mathrm{d}\hat{\mathsf{y}}_i^{(N)}(t)&=\sqrt{2\left((\hat{\mathsf{y}}_i^{(N)}(t))^2+N^{-2}\right)}\mathrm{d}\mathsf{w}_i(t)+2\left[-\Re{(\mathfrak{s})}\hat{\mathsf{y}}_i^{(N)}(t)+\frac{\Im(\mathfrak{s})}{N}\right]\mathrm{d}t\nonumber\\
&+2\sum_{j=1,j\neq i}^N \frac{\hat{\mathsf{y}}_i^{(N)}(t)\hat{\mathsf{y}}_j^{(N)}(t)+N^{-2}}{\hat{\mathsf{y}}_i^{(N)}(t)-\hat{\mathsf{y}}_j^{(N)}(t)}\mathrm{d}t, \ \ i\in \llbracket 1, N \rrbracket.
\end{align}
Hence, using It\^{o}'s formula applied to $\mathfrak{hp}_{t;N}(z)$, we obtain 
\begin{align*}
    \mathrm{d}\mathfrak{hp}_{t;N}(z)
    &=
    \sum_{i=1}^N\frac{-z\mathfrak{hp}_{t;N}(z)}{1-\hat{\mathsf{y}}_i^{(N)}(t)z}\mathrm{d}\hat{\mathsf{y}}_i^{(N)}(t)
    \\
    &=
	\mathrm{d}\mathsf{M}_t^N(z;\mathfrak{hp})
-\sum_{i=1}^N\frac{2z\mathfrak{hp}_{t;N}(z)}{1-\hat{\mathsf{y}}_i^{(N)}(t)z}
\left[-\Re{(\mathfrak{s})}\hat{\mathsf{y}}_i^{(N)}(t)+\frac{\Im{(\mathfrak{s})}}{N}+\sum_{j\neq i}\frac{\hat{\mathsf{y}}_i^{(N)}(t)\hat{\mathsf{y}}_j^{(N)}(t)+N^{-2}}{\hat{\mathsf{y}}_i^{(N)}(t)-\hat{\mathsf{y}}_j^{(N)}(t)}\right]\mathrm{d}t,
\end{align*}
where the local martingale term $\mathsf{M}_t^{N}(z;\mathfrak{hp})$ is given by,
\begin{equation*}
\mathsf{M}_t^{N}(z;\mathfrak{hp})=
\sum_{i=1}^N\frac{-z\sqrt{2\left(\hat{\mathsf{y}}_i^{(N)}(t)^2+N^{-2}\right)}\mathfrak{hp}_{t;N}(z)}{1-\hat{\mathsf{y}}_i^{(N)}(t)z}\mathrm{d}\mathsf{w}_i(t).
\end{equation*}
Then, using  the identities from the proof of Proposition \ref{PropGLfiniteCharPolyEq} along with the identity,
\begin{align*}
&2z\mathfrak{hp}_{t;N}(z)\sum_{i=1}^N\frac{1}{1-z\hat{\mathsf{y}}_i^{(N)}(t)}\sum_{j\neq i}\frac{1}{\hat{\mathsf{y}}_i^{(N)}(t)-\hat{\mathsf{y}}_j^{(N)}(t)}\\&=z^2\left[N(N-1)\mathfrak{hp}_{t;N}(z)-2z(N-1)\partial_z \mathfrak{hp}_{t;N}(z)+z^2\partial_z^2\mathfrak{hp}_{t;N}(z)\right],
\end{align*}
and elementary manipulations we can readily simplify the equation for $(\mathfrak{hp}_{t;N})_{t\ge 0}$ to the desired expression in the statement of the proposition.
\end{proof}

\begin{proof}[Proof of statements on $\mathfrak{hp}_t$ from Theorem \ref{ThmMainStat}] Write $[x]_+=\max\{0,x\}$. In \cite{AM24} we proved that there exists a Feller diffusion process $(\mathbf{X}_{HP}^t)_{t\ge 0}$ on $\Upsilon$ such that whenever, 
\begin{align*}
\bigg((N^{-1}[\mathsf{y}_i^{(N)}(0)]_+)_{i\in \mathbb{N}},(N^{-1}[-\mathsf{y}_{N-i+1}^{(N)}(0)]_+)_{i\in \mathbb{N}},N^{-1}\sum_{i=1}^\infty \mathsf{y}_i^{(N)}(0),N^{-2}\sum_{i=1}^\infty \left(\mathsf{y}_i^{(N)}(0)\right)^2\bigg)  \overset{N \to \infty}{\longrightarrow} \boldsymbol{v}\in \Upsilon,
\end{align*}
we have, as $N \to \infty$,
\begin{align}
\bigg((N^{-1}[\mathsf{y}_i^{(N)}(\bullet)]_+)_{i\in \mathbb{N}},(N^{-1}[-\mathsf{y}_{N-i+1}^{(N)}(\bullet)]_+)_{i\in \mathbb{N}},N^{-1}\sum_{i=1}^\infty \mathsf{y}_i^{(N)}(\bullet),N^{-2}\sum_{i=1}^\infty \left(\mathsf{y}_i^{(N)}(\bullet)\right)^2\bigg) \nonumber\\\overset{\mathrm{d}}{\longrightarrow} \left((\mathsf{y}_i(\bullet))_{i\in \mathbb{N}},(\mathsf{y}_{-i}(\bullet))_{i\in \mathbb{N}},\boldsymbol{\gamma}_{HP}(\bullet),\boldsymbol{\delta}(\bullet)\right)\overset{\mathrm{def}}{=} \mathbf{X}_{HP}^\bullet,\label{HPFellerProcessDef}
\end{align}
in $\mathrm{C}(\mathbb{R}_+;\Upsilon)$
where $\mathbf{X}_{HP}^0=\boldsymbol{v}$. From this, the fact that if $\mathfrak{hp}_{0;N} \overset{N \to \infty}{\longrightarrow} \mathfrak{E}_{\boldsymbol{v}}$ in $\mathfrak{LP}$, then
\begin{equation*}
\mathfrak{hp}_{\bullet;N}\overset{\mathrm{d}}{\longrightarrow} \mathfrak{hp}_\bullet,
\end{equation*}
in $\mathrm{C}(\mathbb{R}_+;\mathfrak{LP})$ and  $(\mathfrak{hp}_t)_{t\ge 0}$ is a Markov process follows immediately under the homeomorphism between $\mathrm{C}(\mathbb{R}_+;\Upsilon)$ and $\mathrm{C}(\mathbb{R}_+;\mathfrak{LP})$. To get the limiting dynamics \eqref{HPspde} for $(\mathfrak{hp}_t)_{t\ge 0}$ we follow the same scheme as in the proof of Proposition \ref{PropEntireEvolGL} by taking the limit of the equation for $(\mathfrak{hp}_{t;N})_{t\ge 0}$ from Proposition \ref{PropHPcharpolySDE}. It is easy to see that the drift term converges uniformly to the desired limiting drift. The only thing that remains is to check the martingale terms quadratic covariations do so as well. Towards this end, recall that for functions in $\mathfrak{LP}$ we have the bound,
for some $C<\infty$,
\begin{equation*}
|\mathfrak{E}_{\boldsymbol{v}}(z)|\le \mathrm{e}^{C(|\gamma||z|+\delta|z|^2)}, \ \forall z\in \mathbb{C}.
\end{equation*}
Then, for any $T\ge 0$ and compact set $\mathscr{K}\subset \mathbb{C}$ we can bound the error $\mathsf{E}_{t;N}(z,w)$ as follows, by applying the above inequality to each term in the sum from \eqref{ErrorCovariance} defining it, with a constant $C_{\mathscr{K}}$ only depending on $\mathscr{K}$,
\begin{align*}
\sup_{t\in [0,T]}\sup_{z,w\in \mathscr{K}} \left|\mathsf{E}_{t;N}(z,w)\right|&\le \sup_{t \in [0,T]} \frac{C_{\mathscr{K}}}{N^2}\sum_{i=1}^N\mathrm{e}^{C_{\mathscr{K}}\left(\left|N^{-1}\sum_{i=1}^N\mathsf{y}_i^{(N)}(t)\right|+N^{-1}\max\left\{|\mathsf{y}_1^{(N)}(t)|,\mathsf{y}_N^{(N)}(t)\right\}+N^{-2}\sum_{i=1}^N \left(\mathsf{y}_i^{(N)}(t)\right)^2\right)}\\&=\frac{C_{\mathscr{K}}}{N}\mathrm{e}^{C_{\mathscr{K}}\sup_{t\in [0,T]}\left(\left|N^{-1}\sum_{i=1}^N\mathsf{y}_i^{(N)}(t)\right|+N^{-1}\max\left\{|\mathsf{y}_1^{(N)}(t)|,\mathsf{y}_N^{(N)}(t)\right\}+N^{-2}\sum_{i=1}^N \left(\mathsf{y}_i^{(N)}(t)\right)^2\right)},
\end{align*}
which, by virtue of \eqref{HPFellerProcessDef} as $N \to \infty$, goes to $0$ almost surely in the coupling afforded by the Skorokhod representation theorem. Finally, regarding convergence to equilibrium, we have from \cite{AM24} that, for $\mathfrak{s}>-1/2$ as $t\to \infty$, 
\begin{equation*}
\mathbf{X}_{HP}^t \overset{\mathrm{d}}{\longrightarrow} \mathbf{Z}_{HP;\mathfrak{s}}
\end{equation*}
where the law of $\mathbf{Z}_{HP;\mathfrak{s}}$ on $\Upsilon$ is concentrated on $\Upsilon^\star$, given by 
\begin{align*}
 \Upsilon^\star&=\left\{\boldsymbol{u}=(\mathbf{x}^+,\mathbf{x}^-,\gamma,\delta)\in \Upsilon: \gamma= \lim_{k\to \infty}\left[\sum_{i=1}^\infty x_i^+\mathbf{1}_{x_i^+>k^{-2}}-\sum_{i=1}^\infty x_i^-\mathbf{1}_{x_i^->k^{-2}}\right], \delta=\sum_{i=1}^\infty (x_i^+)^2+(x_i^-)^2\right\}, 
\end{align*}
and the law of the $\{x_i^+,-x_i^-\}$ is given by $\mathcal{HP}_\mathfrak{s}$ (note that this completely characterises the distribution of $\mathbf{Z}_{HP;\mathfrak{s}}$).
Since the map $\boldsymbol{v}\mapsto \mathfrak{E}_{\boldsymbol{v}}$ is continuous the convergence statement \eqref{ConvToEqIB} follows from the continuous mapping theorem and observing that the pushforward of the law of $\mathbf{Z}_{HP;\mathfrak{s}}$ under this map is exactly the law of $\mathbb{HP}_{\mathfrak{s}}$.
\end{proof}

\subsection{Proof of Proposition \ref{HPISDE}}

We now prove Proposition \ref{HPISDE} under Assumption \ref{ConjHP}.

\begin{proof}[Proof of Proposition \ref{HPISDE}]

For any $t\ge 0$, we define 
\begin{align}\label{psi-HP}
    \psi^{HP}_t(z)
    \overset{\textnormal{def}}{=}&
\partial_z
\log\mathfrak{hp}_t(z)
\nonumber=
\sum_{i\in \mathbb{Z}\backslash \{0\}}
\left(
\mathsf{y}_i(t)+
\frac{-\mathsf{y}_i(t)}{1-\mathsf{y}_i(t)z}
\right)
-
\boldsymbol{\gamma}_{HP}(t)-
z\left(\boldsymbol{\delta}(t)
-\sum_{i\in \mathbb{Z}\backslash \{0\}}
\left(\mathsf{y}_i(t)\right)^2\right),\\
&z\in\mathbb{C}\setminus
\left\{\left(\mathsf{y}_j(t)\right)^{-1}, j\in\mathbb{Z}\backslash\{0\}\right\},
\end{align}
where recall the $\mathsf{y}_i(t)$'s are the reciprocals of the ordered zeros of $\mathfrak{hp}_t$ as in Assumption \ref{Assumption} and \eqref{RecipZeros}. Note that, this is consistent with \eqref{HPFellerProcessDef}. By performing analogous computations as
in Proposition \ref{PropStieltjesGL}, we obtain using 
equation \eqref{HPspde} that,
\begin{align}\label{psiEqu-HP}
    \mathrm{d}\psi^{HP}_t(z)
    =&
    \mathrm{d}\mathsf{u}_t(z)
    -
   2\Re{(\mathfrak{s})}\partial_z\left(z\psi^{HP}_t(z)\right)\mathrm{d}t
    -
    2\left(\Im{(\mathfrak{s})}+z\right)
    \mathrm{d}t
    -
    \partial_z
    \left(z^2\left(\psi^{HP}_t(z)\right)^2\right)\mathrm{d}t,
\end{align}
with the martingale term $\mathsf{u}_t(z)$ having quadratic covariation,
\begin{align*}
\frac{\mathrm{d}}{\mathrm{d}t}\left\langle\mathsf{u}_t(z),
\mathsf{u}_t(w)
\right\rangle
&=
-2\partial_z\partial_w\left( zw\frac{\psi^{HP}_t(z)-\psi^{HP}_t(w)}{z-w}\right).
\end{align*}
For any $i\in\mathbb{Z}\setminus\{0\}, t\ge 0$, recalling 
\eqref{RecipZeros}, the display \eqref{psi-HP} is then read as
\begin{align}\label{psiInvPoints-HP}
    \psi^{HP}_t(z)
=&
\sum_{i\in\mathbb{Z}\setminus\{0\}}
\left(
\frac{1}{\mathfrak{y}_i(t)}+
\frac{1}{z-\mathfrak{y}_i(t)}
\right)-
\boldsymbol{\gamma}_{HP}(t)
-
z\left(\boldsymbol{\delta}(t)
-\sum_{i\in\mathbb{Z}\setminus\{0\}}
\left(\frac{1}{\mathfrak{y}_i(t)}\right)^2\right)
\nonumber\\
=&
\sum_{i\in\mathbb{Z}\setminus\{0\}}
\left(
\frac{z}{\mathfrak{y}_i(t)\left(z-\mathfrak{y}_i(t)\right)}
\right)-
\boldsymbol{\gamma}_{HP}(t)
-
z\left(\boldsymbol{\delta}(t)
-\sum_{i\in\mathbb{Z}\setminus\{0\}}
\left(\frac{1}{\mathfrak{y}_i(t)}\right)^2\right).
\end{align}
As in the proof of Proposition \ref{PropISDEGL}, let $t\ge 0$ be arbitrary and suppose
$\mathscr{C}_t^i$ is a contour enclosing 
$\mathfrak{y}_i(t)$
but not 
$\mathfrak{y}_j(t)$ for $i\neq j$. Then, using the residue theorem we can compute,
\begin{align*}
 \mathrm{d}\mathfrak{y}_i(t)&=
 \frac{1}{2\pi\textnormal{i}}
 \oint_{\mathscr{C}_t^i} 
    z\mathrm{d}\psi^{HP}_t(z)\mathrm{d}z=
    \frac{1}{2\pi\textnormal{i}}\oint_{\mathscr{C}_t^i}
    z\mathrm{d}\mathsf{u}_t(z)\mathrm{d}z
    \\&-
    \frac{1}{2\pi\textnormal{i}}
    \oint_{\mathscr{C}_t^i}
    \left(
2\Re{(\mathfrak{s})}\partial_z\left(z\psi^{HP}_t(z)\right)
    +
    2\left(\Im{(\mathfrak{s})}+z\right)z
    +
    \partial_z
\left(z^2\left(\psi^{HP}_t(z)\right)^2\right)\right)
    \mathrm{d}t\mathrm{d}z.
\end{align*}
Observe that, by analyticity
\begin{align*}
\oint_{\mathscr{C}_t^i} \left(\Im{(\mathfrak{s})}+z\right)z
    \mathrm{d}z
    =0.
    \end{align*}
We thus compute the other terms in the expression as follows
\begin{align*}
&\frac{1}{2\pi\textnormal{i}}
    \oint_{\mathscr{C}_t^i}
z\partial_z\left(z\psi^{HP}_t(z)\right)\mathrm{d}z
=
-\frac{1}{2\pi\textnormal{i}}
    \oint_{\mathscr{C}_t^i}
z\psi^{HP}_t(z)\mathrm{d}z
=
-\mathfrak{y}_i(t),\\
   &\frac{1}{2\pi\textnormal{i}}\oint_{\mathscr{C}_t^i} 
    z\partial_z
\left(z^2\left(\psi^{HP}_t(z)\right)^2\right)\mathrm{d}z
    =
    -\frac{1}{2\pi\textnormal{i}}\oint_{\mathscr{C}_t^i}
    z^2\left(\psi^{HP}_t(z)\right)^2
    \mathrm{d}z=
    -\frac{1}{2\pi\textnormal{i}}\oint_{\mathscr{C}_t^i} 
    \left(
\frac{z^2}{\mathfrak{y}_i(t)\left(z-\mathfrak{y}_i(t)\right)}
\right)^2\mathrm{d}z
\\
&-\frac{1}{2\pi\textnormal{i}}\oint_{\mathscr{C}_t^i} 
\frac{2z^3}{\mathfrak{y}_i(t)\left(z-\mathfrak{y}_i(t)\right)}
\left[
\sum_{j\in\mathbb{Z}\setminus\{0,i\}}
\left(
\frac{z}{\mathfrak{y}_j(t)\left(z-\mathfrak{y}_j(t)\right)}
\right)-
\boldsymbol{\gamma}_{HP}(t)
-
z\left(\boldsymbol{\delta}(t)
-\sum_{j\in\mathbb{Z}\setminus\{0\}}
\left(\frac{1}{\mathfrak{y}_j(t)}\right)^2\right)
    \right]\mathrm{d}z\\
    &=
    -4\mathfrak{y}_i(t)
    -2\left[
    \sum_{j\in\mathbb{Z}\setminus\{0,i\}}
\left(
\frac{\left(\mathfrak{y}_i(t)\right)^3}{\mathfrak{y}_j(t)\left(\mathfrak{y}_i(t)-\mathfrak{y}_j(t)\right)}
\right)-
\boldsymbol{\gamma}_{HP}(t)\mathfrak{y}_i(t)^2
-
\left(\boldsymbol{\delta}(t)
-\sum_{j\in\mathbb{Z}\setminus\{0\}}
\left(\frac{1}{\mathfrak{y}_j(t)}\right)^2\right)
\left(\mathfrak{y}_i(t)\right)^3
\right].
\end{align*}
As for the diffusion term, one can argue similarly to Proposition \ref{PropISDEGL} to verify that 
\begin{align*}
    \frac{1}{2\pi\textnormal{i}}\oint_{\mathscr{C}_t^i}
    z\mathrm{d}\mathsf{u}_t(z)\mathrm{d}z= 
    \sqrt{2}\mathfrak{y}_i(t)
    \mathrm{d}\mathsf{w}_i(t),
\end{align*}
with
$(\mathsf{w}_i(\cdot))_{i\in\mathbb{Z\setminus\{0\}}}$
being a sequence of independent standard Brownian motions
adapted to the natural filtration of
$(\mathbf{X}^{t}_{HP})_{t\ge 0}$. Putting everything together we obtain 
 \begin{align*}
\mathrm{d}\mathfrak{y}_i(t)
    =& 
    \sqrt{2}\mathfrak{y}_i(t)
    \mathrm{d}\mathsf{w}_i(t)
    +2\left(\Re{(\mathfrak{s})}
    +2\right)\mathfrak{y}_i(t)\mathrm{d}t
    +
    2
    \sum_{j\in\mathbb{Z}\setminus\{0,i\}}
\left(
\frac{\left(\mathfrak{y}_i(t)\right)^3}{\mathfrak{y}_j(t)\left(\mathfrak{y}_i(t)-\mathfrak{y}_j(t)\right)}
\right)\mathrm{d}t
\\
&
-2\left[
\boldsymbol{\gamma}_{HP}(t)\left(\mathfrak{y}_i(t)\right)^2
+
\left(\boldsymbol{\delta}(t)
-\sum_{j\in\mathbb{Z}\setminus\{0\}}
\left(\frac{1}{\mathfrak{y}_j(t)}\right)^2\right)
\left(\mathfrak{y}_i(t)\right)^3
\right]\mathrm{d}t,\  \ i\in\mathbb{Z}\setminus\{0\}.
\end{align*}
Then,
an application of It\^{o}'s  formula yields the following equation for the $\mathsf{y}_i$,
\begin{align}\label{initial ISDE-HP}
  \mathrm{d}\mathsf{y}_i(t)
  =& 
 \sqrt{2}\mathsf{y}_i(t)
 \mathrm{d}\mathsf{w}_i(t)
-2\left(\Re{(\mathfrak{s})}
+1\right)\mathsf{y}_i(t)\mathrm{d}t
  +
 2
 \sum_{j\in\mathbb{Z}\setminus\{0,i\}}
\frac{\mathsf{y}_j^2(t)}{\mathsf{y}_i(t)-\mathsf{y}_j(t)}
\mathrm{d}t
\nonumber\\
&
+2\left[
\boldsymbol{\gamma}_{HP}(t)
+
\frac{1}{\mathsf{y}_i(t)}\left(\boldsymbol{\delta}(t)
-\sum_{j\in\mathbb{Z}\setminus\{0\}}
\mathsf{y}_j^2(t)\right)
\right]\mathrm{d}t,\  \ i\in\mathbb{Z}\setminus\{0\}.
\end{align}
We next obtain the SDE satisfied by $(\boldsymbol{\gamma}_{HP}(t))_{t\ge0}$. It is easy to check from the definition that $\boldsymbol{\gamma}_{HP}(t)=-\partial_z\mathfrak{hp}_t(z)\big|_{z=0}$. Therefore, we can use
\eqref{HPspde} to obtain the equation
\begin{align}\label{gammaSDE-HP}
\mathrm{d}\boldsymbol{\gamma}_{HP}(t)
=&
-\partial_z \mathrm{d}\mathsf{M}_t(z;\mathfrak{hp})\big|_{z=0}
+
2\left(\Im{(\mathfrak{s})}-\Re{(\mathfrak{s})}\boldsymbol{\gamma}_{HP}(t)\right)\mathrm{d}t.
\end{align}
Moving on, we have using It\^{o}'s formula that 
\begin{align}\label{x^2-ISDE-HP}
		\frac{1}{4}\mathrm{d}\mathsf{y}_i^2(t)  
        =& 
 \frac{\sqrt{2}}{2}\mathsf{y}_i^2(t)
 \mathrm{d}\mathsf{w}_i(t)
-\left(\Re{(\mathfrak{s})}
+\frac{1}{2}\right)\mathsf{y}_i^2(t)\mathrm{d}t
  +
 \sum_{j\in\mathbb{Z}\setminus\{0,i\}}
\frac{\mathsf{y}_i(t)\mathsf{y}_j^2(t)}{\mathsf{y}_i(t)-\mathsf{y}_j(t)}
\mathrm{d}t
+
\boldsymbol{\gamma}_{HP}(t)\mathsf{y}_i(t)
\mathrm{d}t
\nonumber\\
&+\left(\boldsymbol{\delta}(t)
-\sum_{j\in\mathbb{Z}\setminus\{0\}}
\mathsf{y}_j^2(t)\right)
\mathrm{d}t,\  \ i\in\mathbb{Z}\setminus\{0\}.
\end{align}
Writing the equations in the integral form, and taking the sum over the first $K\in\mathbb{N}$ coordinates, we obtain, for any $t>0$,
\begin{align}\label{x^2-intISDE-HP}
		\frac{1}{4}\sum_{i=1}^K\mathsf{y}_i^2(t)  
        =& 
        \frac{1}{4}\sum_{i=1}^Ky_i^2
        +
\mathsf{MG}_K(t)
-
\left(\Re{(\mathfrak{s})}
+\frac{1}{2}\right)
\int_0^t
\sum_{i=1}^K\mathsf{y}_i^2(s)\mathrm{d}s
  +
 \int_0^t\sum_{i=1}^K\sum_{j\in\mathbb{Z}\setminus\{0,i\}}
\frac{\mathsf{y}_i(s)\mathsf{y}_j^2(s)}{\mathsf{y}_i(s)-\mathsf{y}_j(s)}
\mathrm{d}s
\nonumber\\
&+
\int_0^t\boldsymbol{\gamma}_{HP}(s)
\sum_{i=1}^K\mathsf{y}_i(s)
\mathrm{d}s
+
K\int_0^t
\left(\boldsymbol{\delta}(s)
-
\sum_{j\in\mathbb{Z}\setminus\{0\}}
\mathsf{y}_j^2(s)\right)
\mathrm{d}s.
\end{align}
Here, the
non-coinciding initial condition  $\mathbf{y}$ is such that
$\sum_{i=1}^\infty y_i^2<\infty$,
 and $(\mathsf{MG}_K(t))_{t\ge 0}$ is the corresponding local martingale given by
 \begin{equation*}
 \mathsf{MG}_K(t)=\frac{\sqrt{2}}{2}\int_0^t\sum_{i=1}^K \mathsf{y}_i^2(s)\mathrm{d}\mathsf{w}_i(s), \ \forall t \ge 0.
 \end{equation*}
 Observe that
 \begin{align*}
\sum_{i=1}^K\sum_{j\in\mathbb{Z}\setminus\{0,i\}}
\frac{\mathsf{y}_i(s)\mathsf{y}_j^2(s)}{\mathsf{y}_i(s)-\mathsf{y}_j(s)}
&=
\sum_{\underset{i\neq j}{i,j=1,}}^K\
\frac{\mathsf{y}_i(s)\mathsf{y}_j^2(s)}{\mathsf{y}_i(s)-\mathsf{y}_j(s)}
+
\sum_{i=1}^K\sum_{j\in\mathbb{Z}\setminus\{0,\cdots,K\}}
\frac{\mathsf{y}_i(s)\mathsf{y}_j^2(s)}{\mathsf{y}_i(s)-\mathsf{y}_j(s)}
\\
&=
-\frac{1}{2}\sum_{\underset{i\neq j}{i,j=1,}}^K\
\mathsf{y}_i(s)\mathsf{y}_j(s)
+
\sum_{i=1}^K\sum_{j\in\mathbb{Z}\setminus\{0,\cdots,K\}}
\frac{\mathsf{y}_i(s)\mathsf{y}_j^2(s)}{\mathsf{y}_i(s)-\mathsf{y}_j(s)}, 
\  \ s\in [0,t],
 \end{align*}
and that, 
 \begin{align*}
\sum_{\underset{i\neq j}{i,j=1}}^K\
\mathsf{y}_i(s)\mathsf{y}_j(s) 
\le 
\left( \sum_{i=1}^K \mathsf{y}_i(s)\right)^2
,\  \ 
\sum_{i=1}^K\sum_{j\in\mathbb{Z}\setminus\{0,\cdots,K\}}
\frac{\mathsf{y}_i(s)\mathsf{y}_j^2(s)}{\mathsf{y}_i(s)-\mathsf{y}_j(s)}>0, 
\  \ s\in [0,t].
 \end{align*}
Thus, we have,
\begin{align}\label{delta-sum^2Bound}
K\int_0^t
\left(\boldsymbol{\delta}(s)
-
\sum_{j\in\mathbb{Z}\setminus\{0\}}
\mathsf{y}_j^2(s)\right)
\mathrm{d}s
&\le
      \frac{1}{4}\sum_{i=1}^K\mathsf{y}_i^2(t) 
-\mathsf{MG}_K(t)
+
\left(\Re{(\mathfrak{s})}
+\frac{1}{2}\right)
\int_0^t
\sum_{i=1}^K\mathsf{y}_i^2(s)\mathrm{d}s
 \nonumber\\
& \ \ +
\frac{1}{2}\int_0^t\left(\sum_{i=1}^K \mathsf{y}_i(s)\right)^2
\mathrm{d}s
-
\int_0^t\boldsymbol{\gamma}_{HP}(s)
\sum_{i=1}^K\mathsf{y}_i(s)
\mathrm{d}s.
\end{align}
Note that, all terms on the right hand side above except the last two terms are uniformly bounded as $K\to\infty$. In particular, for $\mathsf{MG}_K$ one can check that the quadratic variation converges and then use the Burkholder-Davis-Gundy inequality \cite{Kallenberg}. As for the last two terms, we claim that after dividing by $K$ they converge to $0$ almost surely. By the Cauchy-Schwartz inequality this would be implied by the almost sure convergence:
 \begin{align*}
    \frac{1}{K^{\frac{1}{2}}}\sup_{s\in [0,t]}\sum_{i=1}^K \mathsf{y}_i(s)
    \overset{K\to\infty}{\longrightarrow} 
    0.
\end{align*}
We now prove this claim. Let $\varepsilon>0$ be arbitrary and by virtue of the convergence of $\sup_{s\in [0,t]}\sum_{i=n+1}^\infty \mathsf{y}_i^2(s)$ to $0$ pick $n$ random large enough so that $\sup_{s\in [0,t]}\sum_{i=n+1}^\infty \mathsf{y}_i^2(s)<\varepsilon^2$. Then, using the Cauchy-Schwarz inequality we can write for all $K>n$
\begin{align*}
    \sup_{s\in [0,t]}\sum_{i=1}^K \mathsf{y}_i(s)
    &= 
    \sup_{s\in [0,t]}\sum_{i=1}^n
    \mathsf{y}_i(s)
    +
    \sup_{s\in [0,t]}\sum_{i=n+1}^K
    \mathsf{y}_i(s)
    \le
    \sup_{s\in [0,t]}\sum_{i=1}^n
    \mathsf{y}_i(s)
    +
    \sqrt{K-n}
    \sqrt{\sup_{s\in [0,t]}\sum_{i=n+1}^K
    \mathsf{y}_i^2(s)}\\
    &\le \sup_{s\in [0,t]} \sum_{i=1}^n \mathsf{y}_i(s)+\varepsilon\sqrt{K-n}.
\end{align*}
Hence, we get that almost surely
\begin{equation*}
\limsup_{K \to \infty} \frac{1}{K^{\frac{1}{2}}}\sup_{s\in [0,t]}\sum_{i=1}^K \mathsf{y}_i(s) <\varepsilon,
\end{equation*}
and since $\varepsilon>0$ was arbitrary this proves the claim. Thus, putting everything together in the bound in 
\eqref{delta-sum^2Bound}, we conclude that almost surely
\begin{align*}
\int_0^t
\left(\boldsymbol{\delta}(s)
-
\sum_{j\in\mathbb{Z}\setminus\{0\}}
\mathsf{y}_j^2(s)\right)
\mathrm{d}s= 
0,
\end{align*}
and hence the SDE reduces to the desired form of Proposition \ref{HPISDE}.
\end{proof}

\subsection{A PDE for Dyson Brownian motion in $1/N$ scaling}

In this final short section we point out a curious fact about the characteristic polynomial of Dyson Brownian motion. It is well-known \cite{Warren,ShkolnikovRamanan,Interlacing,HuaPickrellDiffusions} that the (generalised) Ornstein-Uhlenbeck Dyson Brownian motion with $\mathsf{c}\in \mathbb{R}$ (for $\mathsf{c}=0$ this is standard Dyson Brownian motion while for $\mathsf{c}>0$ it is the standard stationary Ornstein-Uhlenbeck version),
 \begin{align}
		\mathrm{d}\mathfrak{d}_i^{(N)}(t) = 
 \mathrm{d}\mathsf{w}_i(t)  
 -\mathsf{c}\mathfrak{d}^{(N)}_i(t)\mathrm{d}t
 +\sum_{j=1,j\neq i}^N\frac{1}{\mathfrak{d}^{(N)}_i(t)-\mathfrak{d}^{(N)}_j(t)}
 \mathrm{d}t, \ \  i\in \llbracket 1, N \rrbracket,
\end{align}
is consistent i.e. the semigroups are intertwined for different $N$ in the sense of Section \ref{SubsectionConvPathSpace}, see in particular \eqref{Intertwining} from Proposition \ref{PropConsistency}. Moreover, the relevant regularity properties such as the Feller property, continuity of paths and smooth symmetric function generator core are known \cite{DunklProcesses}. Hence, using the theory of \cite{AM24} after embedding into $\Upsilon$ one has convergence in $\mathrm{C}(\mathbb{R}_+;\Upsilon)$. Let us then define
\begin{equation*}
\mathfrak{D}_{t;N}(z)=\prod_{i=1}^N\left(1-\frac{\mathfrak{d}_i^{(N)}(t)z}{N}\right).
\end{equation*}
Note that, dividing by $N$ is too big of a scaling in order to see non-trivial limiting stochastic behaviour for $\mathfrak{D}_{t;N}$, as in for example the edge scaling case towards the Airy line ensemble \cite{HuangZhang}. We have the following result. 

\begin{prop} Let $\mathsf{c}\in \mathbb{R}$. Suppose $\mathfrak{D}_{0;N} \overset{N \to \infty}{\longrightarrow} \mathfrak{D}_0 \in \mathfrak{LP}$. Then, as $N \to \infty$, $\mathfrak{D}_{\bullet;N} \overset{\mathrm{d}}{\longrightarrow}\mathfrak{D}_{\bullet}$ in $\mathrm{C}(\mathbb{R}_+;\mathfrak{LP})$ with the (deterministic) evolution $t\mapsto\mathfrak{D}_t$ solving the equation
\begin{equation}\label{1stOrderPDE}
\partial_t \mathfrak{D}_t(z)=-\frac{z^2}{2}\mathfrak{D}_t(z)-\mathsf{c}z\partial_z\mathfrak{D}_t(z).
\end{equation}
Moreover, for $\mathfrak{D}_0\in \mathfrak{LP}$, the unique solution to this equation is given by,
    \begin{align*}
        \mathfrak{D}_t(z)
        =\mathfrak{D}_0\left(z\mathrm{e}^{-\mathsf{c}t}\right)
        \exp\left(\frac{-z^2}{4\mathsf{c}}
        \left(1-\mathrm{e}^{-2\mathsf{c}t}\right)\right),
    \end{align*}
which for $\mathsf{c}=0$ simplifies to $\mathfrak{D}_t(z)=\mathfrak{D}_0(z)\mathrm{e}^{-tz^2/2}$.
\end{prop}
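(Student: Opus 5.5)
Following the scheme of Proposition \ref{PropGLfiniteCharPolyEq} and Proposition \ref{PropEntireEvolGL}, I will first derive a finite-$N$ equation for $\mathfrak{D}_{t;N}$ and then pass to the limit.

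\emph{Finite-$N$ equation.} Work with the rescaled coordinates $\hat{\mathfrak{d}}_i=N^{-1}\mathfrak{d}_i^{(N)}$, which solve
\begin{equation*}
\mathrm{d}\hat{\mathfrak{d}}_i=N^{-1}\mathrm{d}\mathsf{w}_i-\mathsf{c}\hat{\mathfrak{d}}_i\mathrm{d}t+N^{-2}\sum_{j\neq i}(\hat{\mathfrak{d}}_i-\hat{\mathfrak{d}}_j)^{-1}\mathrm{d}t .
\end{equation*}
Since $\mathfrak{D}_{t;N}$ is linear in each coordinate, It\^o's formula has no second-order terms. This gives:
\begin{itemize}
\item from the $-\mathsf{c}$ drift, the term $-\mathsf{c}z\partial_z\mathfrak{D}_{t;N}$;
\item from the interaction term, the identity used in the proof of Proposition \ref{PropHPcharpolySDE},
\begin{equation*}
2z\sum_i\frac{1}{1-z\hat{\mathfrak{d}}_i}\sum_{j\neq i}\frac{1}{\hat{\mathfrak{d}}_i-\hat{\mathfrak{d}}_j}\,\mathfrak{D}=z^2\left[N(N-1)\mathfrak{D}-2z(N-1)\partial_z\mathfrak{D}+z^2\partial_z^2\mathfrak{D}\right].
\end{equation*}
Multiplied by $-N^{-2}/2$, this yields the drift $-\tfrac{z^2}{2}\mathfrak{D}_{t;N}+N^{-1}\mathcal{R}_N$, where $\mathcal{R}_N$ is a fixed linear differential expression in $\partial_z^k\mathfrak{D}_{t;N}$, $k\le 2$, with polynomial coefficients;
\item a martingale $\mathsf{M}^N_t(z)$ with
\begin{equation*}
\frac{\mathrm{d}}{\mathrm{d}t}\langle \mathsf{M}^N(z),\mathsf{M}^N(w)\rangle=\frac{zw}{N^2}\sum_i\frac{\mathfrak{D}_{t;N}(z)\mathfrak{D}_{t;N}(w)}{(1-z\hat{\mathfrak{d}}_i)(1-w\hat{\mathfrak{d}}_i)}.
\end{equation*}
\end{itemize}

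\emph{Limit.} Convergence of $\mathfrak{D}_{\bullet;N}$ in $\mathrm{C}(\mathbb{R}_+;\mathfrak{LP})$ follows, as stated, from the $\Upsilon$-valued convergence of \cite{AM24} and the homeomorphism $\Upsilon\to\mathfrak{LP}$. I then couple via Skorokhod so that the convergence is almost sure, as in Proposition \ref{PropEntireEvolGL}. By Cauchy estimates, derivatives converge as well, so $N^{-1}\mathcal{R}_N\to0$ locally uniformly in time and on compacts.

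\emph{Main obstacle: the martingale term.} The crux is to show that $\mathsf{M}^N\to0$, so that the limit is deterministic. Each summand in the bracket equals $\mathfrak{D}_{t;N}(z)\mathfrak{D}_{t;N}(w)$ with the $i$-th factor removed. This is the reverse characteristic polynomial of a configuration dominated by the $\Upsilon$-coordinates $(\gamma_N,\delta_N)$ of $\hat{\mathfrak{d}}$ (up to the single removed point $\hat{\mathfrak{d}}_i$, which is bounded by $\sqrt{\delta_N}$). The bound $|\mathfrak{E}_{\boldsymbol{u}}(z)|\le \mathrm{e}^{C(|\gamma||z|+\delta|z|^2)}$ then gives, exactly as for $\mathsf{E}_{s;N}$ in the Hua-Pickrell proof, a bracket of order $N^{-1}\exp(C_{\mathscr K}\sup_{[0,T]}(|\gamma_N|+\delta_N))$. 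This tends to $0$ by tightness of the $\Upsilon$-coordinates. By the Burkholder-Davis-Gundy inequality (after localization) and the Cauchy integral formula, $\mathsf{M}^N\to0$ in $\mathrm{C}(\mathbb{R}_+;\mathsf{H}(\mathbb{C}))$ in probability. Hence the limit satisfies the integrated form of \eqref{1stOrderPDE} pathwise, i.e.
\begin{equation*}
\mathfrak{D}_t(z)=\mathfrak{D}_0(z)+\int_0^t\left(-\tfrac{z^2}{2}\mathfrak{D}_s(z)-\mathsf{c}z\partial_z\mathfrak{D}_s(z)\right)\mathrm{d}s .
\end{equation*}

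\emph{Explicit solution and uniqueness.} Equation \eqref{1stOrderPDE} is a first-order linear transport equation. Along the characteristics $z(s)=z\mathrm{e}^{\mathsf{c}(s-t)}$, the function $g(s)=\mathfrak{D}_s(z(s))$ satisfies $g'=-\tfrac{z(s)^2}{2}g$. Integrating gives
\begin{equation*}
g(t)=g(0)\exp\left(-\int_0^t\tfrac{z^2\mathrm{e}^{2\mathsf{c}(s-t)}}{2}\,\mathrm{d}s\right),
\end{equation*}
which is the stated formula; the case $\mathsf{c}=0$ follows by taking the limit in $\mathsf{c}$. The characteristic argument applies to any solution in $\mathrm{C}(\mathbb{R}_+;\mathsf{H}(\mathbb{C}))$, so it yields uniqueness. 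In particular, the limit is deterministic and the convergence holds for the whole sequence.
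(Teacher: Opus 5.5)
Your proposal is correct and follows the paper's own route: derive the finite-$N$ equation for $\mathfrak{D}_{t;N}$ by It\^o's formula, get convergence in $\mathrm{C}(\mathbb{R}_+;\mathfrak{LP})$ from the consistency/$\Upsilon$-valued theory of \cite{AM24} and the homeomorphism onto $\mathfrak{LP}$, pass to the limit in the equation, and solve the resulting linear first-order equation by characteristics. You also supply the details the paper omits as ``tedious but elementary'', namely the vanishing of the martingale bracket through the same $N^{-1}\mathrm{e}^{C_{\mathscr{K}}\sup(\cdots)}$ bound used for $\mathsf{E}_{s;N}$ in the Hua-Pickrell case, and uniqueness (hence determinism) of the limit via characteristics; the paper additionally mentions, as an alternative, solving the dynamics directly at the level of the $\Upsilon$-parameters.
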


\begin{proof}
The convergence statement for $\mathfrak{D}_{\bullet;N}$ follows by virtue of consistency while the fact that $\mathfrak{D}_{\bullet}$ solves \eqref{1stOrderPDE} follows by deriving an equation for $\mathfrak{D}_{\bullet;N}$ as before and taking the limit. It is tedious but elementary so we omit the details. The linear first order pde \eqref{1stOrderPDE} can be solved explicitly using the method of characteristics. Alternatively,  one can solve at the level of the $\mathfrak{LP}$-class representation of the function with parameters $((\mathsf{x}_i^+(\bullet))_{i\in \mathbb{N}},(\mathsf{x}_i^-(\bullet))_{i\in \mathbb{N}},\boldsymbol{\gamma}(\bullet),\boldsymbol{\delta}(\bullet))$ as follows (one can also prove convergence in this way, see \cite{HuaPickrellDiffusions} where this was discussed):
\begin{equation*}
\mathsf{x}_i^{\pm}(t)=\mathsf{x}^\pm_i(0)\mathrm{e}^{-\mathsf{c}t}, \ \boldsymbol{\gamma}(t)=\boldsymbol{\gamma}(0)\mathrm{e}^{-\mathsf{c}t}, \ \boldsymbol{\delta}(t)=\frac{1}{2\mathsf{c}}\left(1-\mathrm{e}^{-2\mathsf{c}t}\right)+\boldsymbol{\delta}(0)\mathrm{e}^{-2\mathsf{c}t}
\end{equation*}
and recombine to get $\mathfrak{D}_t$.
\end{proof}

\bibliographystyle{acm}
\bibliography{References}

\bigskip 

\noindent{\sc School of Mathematics, University of Edinburgh, James Clerk Maxwell Building, Peter Guthrie Tait Rd, Edinburgh EH9 3FD, U.K.}\newline
\href{mailto:theo.assiotis@ed.ac.uk}{\small theo.assiotis@ed.ac.uk}

\bigskip

\noindent{\sc School of Mathematics, University of Edinburgh, James Clerk Maxwell Building, Peter Guthrie Tait Rd, Edinburgh EH9 3FD, U.K.}\newline
\href{mailto:Z.S.Mirsajjadi@sms.ed.ac.uk}{\small Z.S.Mirsajjadi@sms.ed.ac.uk}

\end{document}